\documentclass[11pt]{article}

\usepackage[T1]{fontenc}
\usepackage[utf8]{inputenc}

\usepackage[a4paper,margin=1in]{geometry}

\usepackage{amsmath,amssymb,amsthm,mathtools}
\usepackage{bm}

\usepackage{graphicx}
\usepackage[colorlinks=true,linkcolor=blue,citecolor=blue,urlcolor=blue]{hyperref}
\usepackage{enumitem}
\usepackage{cite}
\usepackage{subfig}
\usepackage{float}
\usepackage[numbers]{natbib}
\usepackage[labelfont=bf]{caption}

\newtheorem{theorem}{Theorem}[section]
\newtheorem{lemma}[theorem]{Lemma}

\theoremstyle{definition}

\newtheorem{problem}[theorem]{Problem}
\newtheorem{assumption}[theorem]{Assumption}

\title{Numerical Analysis of a Variable-Order Time-Fractional Incompressible Magnetohydrodynamics System}
\author{Abdumauvlen Berdyshev${}^{1,2}$, Dossan Baigereyev${}^{1,3}$, Aibek Bakishev${}^{1,3}$,\\Nurlana Alimbekova${}^{1,3}$, Talgat Farkhadov${}^{1}$\\
\small ~\\
\small ${}^{1}$Institute of Information and Computational Technologies, Almaty, Kazakhstan\\
\small ${}^{2}$Abai Kazakh National Pedagogical University, Almaty, Kazakhstan\\
\small ${}^{3}$Sarsen Amanzholov East Kazakhstan University, Ust-Kamenogorsk, Kazakhstan\\}
\date{\today}

\begin{document}

\maketitle

\begin{abstract}
We consider an incompressible magnetohydrodynamics (MHD) model in which the classical first-order time derivatives in the momentum and magnetic induction equations are replaced by variable-order Caputo time-fractional derivatives. This formulation allows the memory effect to vary during the evolution and represents a time-fractional generalization of the incompressible MHD system with nonstationary memory. To approximate the problem, we use a fully discrete scheme combining a finite element discretization in space with an L1-type approximation of the variable-order Caputo operators in time. For this discretization, we establish a discrete stability estimate and also derive an auxiliary corrected discrete energy estimate for the fully discrete solution. Convergence is proved by showing that the kernels generated by the variable-order L1 approximation satisfy the assumptions of an abstract discrete fractional Gr\"{o}nwall theorem, which is then applied to the coupled MHD system. The numerical study consists of four parts. First, representative order profiles are used to examine temporal convergence. Second, consistency with the classical incompressible MHD equations is studied as the fractional orders approach one, using norms of solution differences and deviations in kinetic and magnetic energies. Third, the influence of the variable-order fractional terms on nonlinear evolution is investigated through the periodic divergence-free vortex benchmark, with comparisons based on energy and enstrophy histories, divergence errors, Reynolds-number dependence, and time-integrated diagnostics. Fourth, parameter-space maps show how the parameters defining the variable orders affect global indicators. The results show that the variable orders can noticeably affect the evolution of the energy, enstrophy, and current enstrophy even when the Reynolds number is fixed.
\end{abstract}

\section{Introduction}

Magnetohydrodynamics (MHD) describes the interaction between magnetic fields 
and electrically conducting fluids, including plasmas, liquid metals, electrolytes, 
and conducting fluids arising in geophysical and astrophysical contexts. 
MHD models are widely used in plasma physics, geophysics, metallurgy, 
nuclear engineering, liquid-metal heat transfer, and other areas of engineering 
science \cite{Moreau1990,Davidson2001,Roberts2013}. These models are used to study 
the coupling between fluid motion and electromagnetic fields in applications 
such as liquid-metal flows, plasma control, and natural magnetic phenomena.

On the other hand, many physical processes, including diffusion, heat
transfer, and viscoelastic flow, exhibit memory and hereditary effects
that cannot always be adequately described by classical differential
models. Fractional calculus, based on derivatives and integrals of
non-integer order, provides a widely used tool for modeling such effects
\cite{Mainardi2010a,Metzler2000,Tarasov2010,Das2011}.
Fractional derivatives have been successfully applied to describe
anomalous diffusion, subdiffusion, and superdiffusion, as well as
nonlocal effects in charge transport, heat transfer, and deformation of
viscoelastic materials
\cite{Magin2004,Tarasov2010,Das2011,Madiyarov2025,Abdiramanov2024,Shiyapov2026,Baishemirov2024}.
Related numerical analyses for Caputo-type fractional problems can be found, 
for example, in \cite{Alikhanov2015,Liao_2019,Lin2007,Altybay2026}.
Recently, fractional differential models with variable order, where the
order may depend on time, space, or even the unknown solution, have
attracted increasing attention
\cite{Sun2019,Alimbekova2024b}. Such models allow the strength of memory
or anomalous transport to vary during the evolution of the process and
therefore provide a useful way to describe systems with changing
dynamical regimes.

The combination of MHD flow modeling and fractional calculus has motivated
a number of studies involving fractional operators in reduced MHD-related
fluid and heat-transfer models, including viscous-fluid models with
Newtonian heating, generalized Oldroyd--B flows, and Maxwell nanofluid
flows \cite{Ali2024,Arif2023,Asjad2023a}. These works demonstrate the use
of fractional operators in MHD-related flow models, although they differ
substantially from the variable-order time-fractional incompressible MHD
system considered here.

Numerical methods for classical MHD equations have been extensively developed,
including finite difference, finite volume, finite element, spectral element,
and structure-preserving approaches
\cite{Bader2025,Fambri2025,Shakeri2011}. Fractional MHD-related flow and
heat-transfer models have also been studied numerically, for example using
finite difference, L1-type, and spectral-collocation techniques
\cite{Liu2024a,Liu2024,Shen2018,Zhao2016}. These studies mostly concern
constant-order, space-fractional, distributed-order, or reduced flow
configurations. By comparison, variable-order time-fractional incompressible MHD systems appear
to be much less explored.

The aim of this paper is to develop, analyze, and test a fully discrete numerical scheme 
for incompressible magnetohydrodynamics with variable-order Caputo time-fractional derivatives. 
Particular attention is given to the influence of time-dependent fractional orders on transient 
MHD behavior and to the recovery of the classical incompressible MHD model in the limit 
as the orders approach unity. The numerical study is therefore used both to assess the discretization and
to examine how different order profiles affect the evolution of standard
diagnostic quantities.

The present work differs from many existing fractional MHD studies.
Most time-fractional MHD models use constant-order derivatives and are considered
in reduced flow configurations \cite{Rehman2023,Arif2023,Abbas2024}.
Works involving variable- or distributed-order fractional operators are less common
(cf. \cite{Khan2023,Li2024,Qiao2024}). To our knowledge, these works do not provide
a fully discrete numerical analysis for the variable-order time-fractional
incompressible MHD system considered here. 
In particular, it remains insufficiently understood
how the temporal profile of the fractional order influences standard MHD diagnostics
and whether a numerical discretization consistently recovers the classical MHD
behavior as the fractional orders approach unity. From the analytical point of view,
the variable-order formulation introduces an additional difficulty because the
discrete memory kernels depend on the current time level and must be treated within
a coupled incompressible MHD system. Consequently, the convergence proof requires
more than a direct extension of the constant-order analysis: one must verify that
the kernels generated by the variable-order L1 discretization satisfy the assumptions
of the abstract discrete fractional Gr\"{o}nwall theorem \cite{Liao_2019} used in
the error analysis.

In view of these considerations, the main contributions of this paper are as follows.

(1) We formulate a variable-order time-fractional generalization of the incompressible MHD 
equations by replacing the classical first-order time derivatives in the momentum and 
magnetic induction equations with Caputo derivatives of time-dependent orders 
$\alpha\left(t\right),\beta\left(t\right)\in\left(0,1\right)$, respectively. 
This formulation allows the temporal memory effect to vary during the evolution 
and provides a model for MHD dynamics with nonstationary relaxation behavior.

(2) We construct a fully discrete numerical method for the coupled incompressible MHD 
system by combining a finite element approximation in space with an L1-type discretization 
in time for the variable-order Caputo operators.

(3) For the fully discrete scheme, we establish stability and derive a convergence estimate. 
The stability analysis uses a discrete energy inequality for the variable-order 
fractional derivative together with complementary kernel bounds, and is complemented 
by a corrected discrete energy estimate for the fully discrete solution. In the convergence analysis, 
we verify that the discrete kernels generated by the variable-order L1 approximation satisfy 
the assumptions required to apply the abstract discrete fractional Gr\"{o}nwall theorem 
of \cite{Liao_2019} to the finite element--L1 scheme.

(4) We perform representative numerical tests to assess the method and investigate how 
time-dependent fractional orders affect the solution behavior. These include convergence 
studies for selected order profiles, a classical-limit test showing consistency with the 
standard incompressible MHD equations as $\alpha\left(t\right),\beta\left(t\right)\to 1$, 
and simulations of the periodic divergence-free vortex under several variable-order scenarios.

(5) We further examine the sensitivity of the model to the temporal memory profile through 
parameter-space maps for the order-function parameters, using global diagnostics to quantify 
how the evolution depends on the choice of the variable order.

The rest of the paper is organized as follows. Section \ref{sec:Materials} introduces 
the variable-order time-fractional incompressible magnetohydrodynamics model, 
summarizes the numerical method used for its discretization and implementation,
and presents the stability and convergence analysis.
Section \ref{sec:Results} presents the computational results and analyzes how the 
variable-order Caputo time derivatives influence the flow dynamics across the considered 
numerical experiments, including verification tests and the variable-order fractional 
periodic divergence-free vortex benchmark. Section~\ref{sec:Discussion} discusses and
summarizes the main findings, and Section~\ref{sec:Conclusions} states the concluding
remarks and outlines directions for future work.

\section{Problem Formulation and Numerical Method}
\label{sec:Materials}

\subsection{Problem Formulation}

In this paper, we consider a variable-order time-fractional generalization
of the incompressible magnetohydrodynamic equations in the domain
$\Omega\times\left(0,T\right]$, where $\Omega$ is a bounded
subdomain of $\mathbb{R}^{2}$, and $T>0$ is a finite time:
\begin{align}
 & \rho_{0}\theta^{\alpha\left(t\right)-1}\,\,{}^{C}D_{t}^{\alpha\left(t\right)}\mathbf{u}+\rho_{0}\left(\mathbf{u}\cdot\nabla\right)\mathbf{u}=-\nabla P+\mu\nabla^{2}\mathbf{u}+\frac{1}{\mu_{0}}\left(\nabla\times\mathbf{B}\right)\times\mathbf{B}+\mathbf{f} &  & \text{in}\;\Omega\times\left(0,T\right],\label{eq:given_u1}\\
 & \theta^{\beta\left(t\right)-1}\,\,{}^{C}D_{t}^{\beta\left(t\right)}\mathbf{B}=\nabla\times\left(\mathbf{u}\times\mathbf{B}\right)+\eta\nabla^{2}\mathbf{B}+\mathbf{g} &  & \text{in}\;\Omega\times\left(0,T\right],\label{eq:given_B1}\\
 & \nabla\cdot\mathbf{u}=0,\qquad\nabla\cdot\mathbf{B}=0 &  & \text{in}\;\Omega\times\left(0,T\right],\label{eq:eq:eq:given_B2}
\end{align}
obtained from the classical incompressible MHD system (cf. \cite{He2018}) by replacing
the first-order time derivatives with
Caputo derivatives of orders $\alpha\left(t\right)$, $\beta\left(t\right)\in\left(0,1\right)$
that depend on the observation time $t$,
\begin{equation}
(\,^{C}D_{t}^{\nu\left(t\right)}f)\left(t\right)=\frac{1}{\Gamma\left(1-\nu\left(t\right)\right)}\int_{0}^{t}\frac{\partial_{s}f\left(s\right)}{\left(t-s\right)^{\nu\left(t\right)}}ds,\qquad0<\nu\left(t\right)<1,\quad t\in\left[0,T\right],\label{eq:vo_caputo}
\end{equation}
where $\nu\in\left\{ \alpha,\beta\right\} $. 
Since $^{C}D_{t}^{\nu\left(t\right)}$
carries the physical dimension {$\left[\mathrm{s}\right]^{-\nu\left(t\right)}$}, we multiply it by $\theta^{\nu\left(t\right)-1}$
so that the fractional term has the same dimension as a first-order time derivative (cf. \cite{Vaz2025}). 
When $\nu\left(t\right)\to 1$, this prefactor tends to 1, and the fractional derivative formally approaches the classical first-order derivative.
Thus, in the limiting case $\alpha\left(t\right),\beta\left(t\right)\to 1$, the system (\ref{eq:given_u1})--(\ref{eq:eq:eq:given_B2}) reduces to the standard
incompressible MHD equations \cite{He2018}.

In Eqs. (\ref{eq:given_u1})--(\ref{eq:eq:eq:given_B2}), $\mathbf{u}$
denotes the velocity, $\mathbf{B}$ is the magnetic field, $P$ is
the pressure, $\rho_{0}$ is the density, $\mu$ is the dynamic viscosity,
$\eta$ is the magnetic diffusion, $\mu_{0}$ is magnetic force scaling parameter,
$\mathbf{f}\left(\mathbf{x},t\right)$ and $\mathbf{g}\left(\mathbf{x},t\right)$
are source terms, and $\mathbf{x}\in\mathbb{R}^{2}$. Eqs. (\ref{eq:given_u1})--(\ref{eq:eq:eq:given_B2})
are supplemented with initial conditions 
\[
\mathbf{u}=\mathbf{u}_{0},\qquad \mathbf{B}=\mathbf{B}_{0}\quad\text{in}\quad\overline{\Omega}\times\left\{ 0\right\}, 
\]
satisfying the conditions $\nabla\cdot\mathbf{u}_{0}=0$, $\nabla\cdot\mathbf{B}_{0}=0$,
and with one of the following boundary conditions:

(i) homogeneous Dirichlet boundary conditions,
\[
\mathbf{u}=\mathbf{0},\qquad \mathbf{B}=\mathbf{0}
\qquad \text{on}\quad\partial\Omega\times\left(0,T\right];
\]

(ii) periodic boundary conditions.

The curl and cross product operations are understood through the standard embedding
of planar vector fields into $\mathbb{R}^3$, namely
$\mathbf{u}=\left(u_{1},u_{2},0\right)$ and $\mathbf{B}=\left(B_{1},B_{2},0\right)$, 
so that, for example,
$\nabla\times\mathbf{B}=\left(0,0,\partial_{x_{1}}B_{2}-\partial_{x_{2}}B_{1}\right)$.

We introduce characteristic scales $S_{0}$ (length), $U_{0}$ (velocity),
$T_{0}=\theta=S_{0}/U_{0}$ (time), $B_{0}=U_{0}\sqrt{\rho_{0}\mu_{0}}$
(magnetic field), and $p_{0}=\rho_{0} U_{0}^{2}$ (pressure), and define
dimensionless variables $\mathbf{x}^{*}=\frac{\mathbf{x}}{S_{0}}$, $t^{*}=\frac{t}{T_{0}}$,
$\mathbf{u}^{*}=\frac{\mathbf{u}}{U_{0}}$, $P^{*}=\frac{P}{p_{0}}$, and $\mathbf{B}^{*}=\frac{\mathbf{B}}{B_{0}}$.

Using the scaling property of the Caputo derivative,
\begin{align*}
 & (\,^{C}D_{t}^{\alpha\left(t\right)}\mathbf{u})\left(t\right)=U_{0}T_{0}^{-\alpha\left(T_{0}t^{*}\right)}(\,^{C}D_{t^{*}}^{\alpha\left(T_{0}t^{*}\right)}\mathbf{u}^{*})\left(t^{*}\right),\\
 & (\,^{C}D_{t}^{\beta\left(t\right)}\mathbf{B})\left(t\right)=B_{0}T_{0}^{-\beta\left(T_{0}t^{*}\right)}(\,^{C}D_{t^{*}}^{\beta\left(T_{0}t^{*}\right)}\mathbf{B}^{*})\left(t^{*}\right),
\end{align*}
we obtain the dimensionless system
\begin{align}
 & ^{C}D_{t^{*}}^{\alpha^{*}\left(t^{*}\right)}\mathbf{u}^{*}+\left(\mathbf{u}^{*}\cdot\nabla^{*}\right)\mathbf{u}^{*}=-\nabla^{*}P^{*}+\frac{1}{\mathrm{Re}}\nabla^{*2}\mathbf{u}^{*}+\left(\nabla^{*}\times\mathbf{B}^{*}\right)\times\mathbf{B}^{*}+\mathbf{f}^{*},\label{eq:nondim_u1}\\
 & ^{C}D_{t^{*}}^{\beta^{*}\left(t^{*}\right)}\mathbf{B}^{*}=\nabla^{*}\times\left(\mathbf{u}^{*}\times\mathbf{B}^{*}\right)+\frac{1}{\mathrm{Rm}}\nabla^{*2}\mathbf{B}^{*}+\mathbf{g}^{*},\label{eq:nondim_B1}\\
 & \nabla^{*}\cdot\mathbf{u}^{*}=0,\qquad\nabla^{*}\cdot\mathbf{B}^{*}=0,\label{eq:nondim_B2}
\end{align}
where $\alpha^{*}\left(t^{*}\right)=\alpha\left(T_{0}t^{*}\right)$, $\beta^{*}\left(t^{*}\right)=\beta\left(T_{0}t^{*}\right)$,
$\mathrm{Re}=\frac{\rho_{0} U_{0}S_{0}}{\mu}$, 
$\mathrm{Rm}=\frac{U_{0}S_{0}}{\eta}$,
$\mathbf{f}^{*}=\frac{S_{0}}{\rho_{0} U_{0}^{2}}\mathbf{f}$, and 
$\mathbf{g}^{*}=\frac{T_{0}}{U_{0}\sqrt{\rho_{0} \mu_{0}}}\mathbf{g}$.
For convenience, we henceforth omit the asterisks from all dimensionless variables and operators,
and write $\alpha\left(t\right)$, $\beta\left(t\right)$ for the dimensionless order functions.

We now derive the variational (weak) formulation associated with the
non-dimensional system \eqref{eq:nondim_u1}--\eqref{eq:nondim_B2}.
Throughout the paper, we use standard notation for Sobolev spaces. 
The usual spaces $H^{s}(\Omega)$ and $W^{k,p}(\Omega)$ are equipped with 
their standard norms. In particular, $\lVert\cdot\rVert$ denotes the norm in $L^{2}(\Omega)$, 
and $\lVert\cdot\rVert_{L^{p}}$ denotes the norm in $L^{p}(\Omega)$.
We introduce the pressure space ${\mathcal{Q}=L^{2}_{0}\left(\Omega\right)=\left\{ q\in L^{2}\left(\Omega\right):\,\,\int_{\Omega}q\,d\mathbf{x}=0\right\} }$.
For the velocity and magnetic field, we introduce the corresponding function spaces according to the boundary conditions under consideration.
In the case of homogeneous Dirichlet boundary conditions, we take $\mathcal{V}=H^{1}_{0}\left(\Omega\right)^{2}$, $\mathcal{C}=H^{1}_{0}\left(\Omega\right)^{2}$.
In the periodic case, we take $\mathcal{V}=H^{1}_{\mathrm{per}}\left(\Omega\right)^{2}$, $\mathcal{C}=H^{1}_{\mathrm{per}}\left(\Omega\right)^{2}$, 
where $H^{1}_{\mathrm{per}}\left(\Omega\right)^{2}$ denotes the corresponding periodic Sobolev space. 

To obtain a convenient form of the pressure and coupling terms, we
employ the standard vector identities
\[
\left(\nabla\times\mathbf{B}\right)\times\mathbf{B}=\left(\mathbf{B}\cdot\nabla\right)\mathbf{B}-\nabla\left(\frac{1}{2}\left|\mathbf{B}\right|^{2}\right),\qquad
\nabla\times\left(\mathbf{u}\times\mathbf{B}\right)=\left(\mathbf{B}\cdot\nabla\right)\mathbf{u}-\left(\mathbf{u}\cdot\nabla\right)\mathbf{B}
\]
under the constraints (\ref{eq:nondim_B2}), and introduce the total pressure $p=P+\frac{1}{2}\left|\mathbf{B}\right|^{2}$.
For $\mathbf{a},\mathbf{b},\mathbf{w}\in H^{1}\left(\Omega\right)^{2}$
we further define the trilinear form
\[
\ell\left(\mathbf{a};\mathbf{b},\mathbf{w}\right)=\left(\left(\mathbf{a}\cdot\nabla\right)\mathbf{b},\mathbf{w}\right)+\frac{1}{2}\left(\left(\nabla\cdot\mathbf{a}\right)\mathbf{b},\mathbf{w}\right),\label{eq:advective_derivative}
\]
where $(\cdot,\cdot)$ denotes the $L^{2}(\Omega)$ inner product.
Under homogeneous Dirichlet or periodic boundary conditions, integration by parts yields the equivalent representation
\[
\ell\left(\mathbf{a};\mathbf{b},\mathbf{w}\right)=\frac{1}{2}\left(\left(\mathbf{a}\cdot\nabla\right)\mathbf{b},\mathbf{w}\right)-\frac{1}{2}\left(\left(\mathbf{a}\cdot\nabla\right)\mathbf{w},\mathbf{b}\right),\label{eq:trilinear}
\]
and therefore
\begin{equation}
\ell\left(\mathbf{a};\mathbf{b},\mathbf{b}\right)=0,\qquad \ell\left(\mathbf{a};\mathbf{b},\mathbf{w}\right)=-\ell\left(\mathbf{a};\mathbf{w},\mathbf{b}\right).\label{eq:trilinear_prop_c}
\end{equation}

Then the weak formulation reads:
\begin{problem}
\label{prob:weak}
Find $(\mathbf{u},p,\mathbf{B})\in \mathcal{V}\times\mathcal{Q}\times\mathcal{C}$ such that, for all
$(\mathbf{v},q,\mathbf{C})\in \mathcal{V}\times\mathcal{Q}\times\mathcal{C}$,
\begin{align*}
 & \left(\,^{C}D^{\alpha\left(t\right)}_{t}\mathbf{u},\mathbf{v}\right)+\frac{1}{\mathrm{Re}}\left(\nabla\mathbf{u},\nabla\mathbf{v}\right)+\ell\left(\mathbf{u};\mathbf{u},\mathbf{v}\right)-\ell\left(\mathbf{B};\mathbf{B},\mathbf{v}\right)-\left(p,\nabla\cdot\mathbf{v}\right)=\left<\mathbf{f},\mathbf{v}\right>,\\
 & \left(\,^{C}D^{\beta\left(t\right)}_{t}\mathbf{B},\mathbf{C}\right)+\frac{1}{\mathrm{Rm}}\left(\nabla\mathbf{B},\nabla\mathbf{C}\right)+\ell\left(\mathbf{u};\mathbf{B},\mathbf{C}\right)-\ell\left(\mathbf{B};\mathbf{u},\mathbf{C}\right)=\left<\mathbf{g},\mathbf{C}\right>,\\
 & \left(\nabla\cdot\mathbf{u},q\right)=0.
\end{align*}
\end{problem}
The magnetic solenoidal condition $\nabla\cdot\mathbf{B}=0$ is understood
as part of the continuous MHD model, but it is not introduced here as a
separate weak equation. Similar $H^{1}$-conforming variational formulations for
the magnetic field have been used in finite element analyses of incompressible
MHD (cf. \cite{He2018}).

\subsection{Numerical Method}

To discretize the problem in time, we introduce the partition $\mathcal{I}_{N}$ of the
time interval $\left[0,T\right]$ by the points $t_{n}=n\tau$, $\tau>0$,
$n=0,1,...,N$, so that $N\tau=T$. Denote $\phi^{n}=\phi\left(\cdot,t_{n}\right)$.
Further, we approximate the variable-order Caputo fractional derivative
(\ref{eq:vo_caputo}) using the formula \cite{Liu2022}
\begin{equation}
(\,^{C}D_{t}^{\nu\left(t_{n}\right)}\phi)\left(t_{n}\right)
=\Delta_{\tau}^{\nu\left(t_{n}\right)}\phi^{n}+O\left(\tau\right),\qquad
0<\nu\left(t_{n}\right)<1
\label{eq:frac_replacement}
\end{equation}
with
\begin{equation}
\Delta_{\tau}^{\nu\left(t_{n}\right)}\phi^{n}=\frac{1}{\tau}\left(b^{\left(\nu\right)}_{n,n}\phi^{n}-\sum^{n-1}_{k=1}\left(b^{\left(\nu\right)}_{n,k+1}-b^{\left(\nu\right)}_{n,k}\right)\phi^{k}-b^{\left(\nu\right)}_{n,1}\phi^{0}\right)=\frac{1}{\tau}\sum^{n}_{k=1}b^{\left(\nu\right)}_{n,k}\left(\phi^{k}-\phi^{k-1}\right),\label{eq:discrete_caputo}
\end{equation}
where 
\begin{equation}
b^{\left(\nu\right)}_{n,k}=\frac{1}{\Gamma\left(2-\nu\left(t_{n}\right)\right)}\left(\left(t_{n}-t_{k-1}\right)^{1-\nu\left(t_{n}\right)}-\left(t_{n}-t_{k}\right)^{1-\nu\left(t_{n}\right)}\right).
\label{eq:coeffs_b}
\end{equation}

Using (\ref{eq:frac_replacement}), we rewrite Problem \ref{prob:weak} as follows.
\begin{problem}
Let the solutions $\mathbf{u}^{i}\in \mathcal{V}$, $\mathbf{B}^{i}\in \mathcal{C}$ be known at time levels $i=0,...,n-1$.
Find $\left(\mathbf{u}^{n},p^{n},\mathbf{B}^{n}\right)\in \mathcal{V}\times\mathcal{Q}\times\mathcal{C}$,
satisfying the following identities for all $\left(\mathbf{v},q,\mathbf{C}\right)\in \mathcal{V}\times\mathcal{Q}\times\mathcal{C}$:
\begin{align}
 & \left(\Delta^{\alpha\left(t_{n}\right)}_{\tau}\mathbf{u}^{n},\mathbf{v}\right)+\frac{1}{\mathrm{Re}}\left(\nabla\mathbf{u}^{n},\nabla\mathbf{v}\right)+\ell\left(\mathbf{u}^{n};\mathbf{u}^{n},\mathbf{v}\right)-\ell\left(\mathbf{B}^{n};\mathbf{B}^{n},\mathbf{v}\right)\nonumber\\
 & \qquad\qquad\qquad\qquad\qquad -\left(p^{n},\nabla\cdot\mathbf{v}\right)=\left<\mathbf{f}^{n},\mathbf{v}\right>-\left(\mathbf{r}^{n}_{1},\mathbf{v}\right),\displaybreak[0]\label{eq:semi_u1}\\
 & \left(\Delta^{\beta\left(t_{n}\right)}_{\tau}\mathbf{B}^{n},\mathbf{C}\right)+\frac{1}{\mathrm{Rm}}\left(\nabla\mathbf{B}^{n},\nabla\mathbf{C}\right)+\ell\left(\mathbf{u}^{n};\mathbf{B}^{n},\mathbf{C}\right)\nonumber\\
 & \qquad\qquad\qquad\qquad\qquad -\ell\left(\mathbf{B}^{n};\mathbf{u}^{n},\mathbf{C}\right)=\left<\mathbf{g}^{n},\mathbf{C}\right>-\left(\mathbf{r}^{n}_{2},\mathbf{C}\right),\displaybreak[0]\label{eq:semi_B1}\\
 & \left(\nabla\cdot\mathbf{u}^{n},q\right)=0,\label{eq:semi_u2}
\end{align}
where
\begin{equation}
\mathbf{r}^{n}_{1}=(\,^{C}D_{t}^{\alpha\left(t_{n}\right)}\mathbf{u})\left(t_{n}\right)-\Delta^{\alpha\left(t_{n}\right)}_{\tau}\mathbf{u}^{n},\qquad
\mathbf{r}^{n}_{2}=(\,^{C}D_{t}^{\beta\left(t_{n}\right)}\mathbf{B})\left(t_{n}\right)-\Delta^{\beta\left(t_{n}\right)}_{\tau}\mathbf{B}^{n}.
\label{eq:semi_r}
\end{equation}
\end{problem}

Let us further introduce a triangulation $\mathfrak{T}_{h}$ of $\Omega$ 
with the discretization parameter $h>0$,
and let $\mathcal{V}_{h}\subset \mathcal{V}$, $\mathcal{Q}_{h}\subset \mathcal{Q}$, and $\mathcal{C}_{h}\subset \mathcal{C}$ be
conforming finite element spaces. We define the numerical scheme as follows.
\begin{problem}
\label{prob:fully}Suppose that the discrete solutions $\mathbf{u}_{h}^{i}\in \mathcal{V}_{h}$,
$\mathbf{B}_{h}^{i}\in \mathcal{C}_{h}$ are known at time levels $i=0,...,n-1$.
Find $\left(\mathbf{u}_{h}^{n},p_{h}^{n},\mathbf{B}_{h}^{n}\right)\in \mathcal{V}_{h}\times\mathcal{Q}_{h}\times\mathcal{C}_{h}$
satisfying the following identities for all $\left(\mathbf{v}_{h},q_{h},\mathbf{C}_{h}\right)\in \mathcal{V}_{h}\times\mathcal{Q}_{h}\times\mathcal{C}_{h}$:
\begin{align}
 & \left(\Delta^{\alpha\left(t_{n}\right)}_{\tau}\mathbf{u}^{n}_{h},\mathbf{v}_{h}\right)+\frac{1}{\mathrm{Re}}\left(\nabla\mathbf{u}^{n}_{h},\nabla\mathbf{v}_{h}\right)+\ell\left(\mathbf{u}^{n}_{h};\mathbf{u}^{n}_{h},\mathbf{v}_{h}\right)-\ell\left(\mathbf{B}^{n}_{h};\mathbf{B}^{n}_{h},\mathbf{v}_{h}\right)\nonumber \\
 & \qquad\qquad\qquad\qquad\qquad-\left(p^{n}_{h},\nabla\cdot\mathbf{v}_{h}\right)+\zeta\left(\nabla\cdot\mathbf{u}^{n}_{h},\nabla\cdot\mathbf{v}_{h}\right)=\left<\mathbf{f}^{n},\mathbf{v}_{h}\right>,\label{eq:fully_u1}\\
 & \left(\Delta^{\beta\left(t_{n}\right)}_{\tau}\mathbf{B}^{n}_{h},\mathbf{C}_{h}\right)+\frac{1}{\mathrm{Rm}}\left(\nabla\mathbf{B}^{n}_{h},\nabla\mathbf{C}_{h}\right)+\ell\left(\mathbf{u}^{n}_{h};\mathbf{B}^{n}_{h},\mathbf{C}_{h}\right)\nonumber \\
 & \qquad\qquad\qquad\qquad\qquad-\ell\left(\mathbf{B}^{n}_{h};\mathbf{u}^{n}_{h},\mathbf{C}_{h}\right)+\chi\left(\nabla\cdot\mathbf{B}^{n}_{h},\nabla\cdot\mathbf{C}_{h}\right)=\left<\mathbf{g}^{n},\mathbf{C}_{h}\right>,\label{eq:fully_B1}\\
 & \left(\nabla\cdot\mathbf{u}^{n}_{h},q_{h}\right)=0.\label{eq:fully_u2}
\end{align}
\end{problem}

Since the velocity and magnetic field are approximated in $H^{1}$-conforming
finite element spaces, the constraints (\ref{eq:nondim_B2}) are not enforced pointwise
at the discrete level. To improve the control of these constraints,
we augment the momentum and magnetic equations by the consistent stabilization
terms $\zeta\left(\nabla\cdot\mathbf{u}^{n}_{h},\nabla\cdot\mathbf{v}_{h}\right)$
and $\chi\left(\nabla\cdot\mathbf{B}^{n}_{h},\nabla\cdot\mathbf{C}_{h}\right)$,
where $\zeta\geq 0$ and $\chi\geq 0$ are stabilization parameters. These terms
vanish for divergence-free exact solutions and therefore do not affect
the consistency of the method. Their role is to penalize discrete
divergence errors and improve mass conservation and magnetic solenoidality
in the numerical approximation.

\subsection{Stability and Convergence Analysis}

In this section, we establish stability and convergence of the fully discrete finite element--L1 scheme. 
The main analytical difficulty arises from the variable-order L1 discretization of the Caputo derivatives. 
In contrast to the constant-order case, the discrete coefficients depend on the current time level $t_{n}$, 
so the resulting memory kernels vary with $n$ and cannot be handled directly by the standard fixed-order analysis. 
In the present problem, this issue must be treated within the nonlinear coupled structure of the incompressible 
MHD system. 
We first derive a corrected discrete energy inequality, which accounts for the
contribution of the nonstationary memory kernels to the discrete energy balance.
The stability estimate is then obtained using a discrete energy inequality for
the variable-order fractional derivative together with complementary kernel
bounds.
For the convergence proof, we additionally verify that the kernels 
generated by the variable-order L1 approximation satisfy the assumptions of the abstract discrete 
fractional Gr\"{o}nwall theorem \cite{Liao_2019} used to bound the fully discrete error sequence.

\begin{assumption}
\label{assu:regularity}Let $l$ and $m$ be the exponents determined by the approximation properties 
of the finite element spaces for the velocity and magnetic field, respectively.
Assume that the exact solution satisfies
\[
\mathbf{u}\in L^{\infty}(0,T;H^{l+1}(\Omega)^2)\cap L^{\infty}(0,T;W^{1,\infty}(\Omega)^2),
\qquad
\partial_t\mathbf{u}\in L^{\infty}(0,T;H^{l+1}(\Omega)^2),
\]
\[
\mathbf{B}\in L^{\infty}(0,T;H^{m+1}(\Omega)^2)\cap L^{\infty}(0,T;W^{1,\infty}(\Omega)^2),
\qquad
\partial_t\mathbf{B}\in L^{\infty}(0,T;H^{m+1}(\Omega)^2),
\]
and that $\mathbf{u}$ and $\mathbf{B}$ satisfy the prescribed boundary conditions.
In the periodic case, we additionally assume that the velocity and magnetic field have zero spatial mean.
\end{assumption}

\begin{assumption}
\label{assu:rhs}
Assume that $\mathbf{f},\mathbf{g}\in L^{2}\left(0,T;H^{-1}\left(\Omega\right)^{2}\right)$.
\end{assumption}

\begin{assumption}
\label{assu:orders_bounds}Assume that $\alpha,\beta\in C\left[0,T\right]$, and
\[
0<\alpha_{*}\leq\alpha\left(t\right)\leq\alpha^{*}<1,\qquad0<\beta_{*}\leq\beta\left(t\right)\leq\beta^{*}<1.
\]
\end{assumption}

We will repeatedly use several standard inequalities in the analysis. 
In the case of homogeneous Dirichlet boundary conditions, the Poincar\'{e} inequality
holds on the discrete velocity and magnetic field spaces since they are conforming
subspaces of $H_{0}^{1}\left(\Omega\right)^{2}$. In the periodic case, the corresponding estimate is
used on the subspace of periodic functions with zero spatial mean. Therefore, in both
cases there exists a constant $C>0$, independent of $h$, such that
\[
\lVert\mathbf v_h\rVert \le C\lVert\nabla \mathbf v_h\rVert,\qquad
\lVert\mathbf C_h\rVert \le C\lVert\nabla \mathbf C_h\rVert
\]
for all $\mathbf v_{h}\in \mathcal{V}_{h}$ and $\mathbf C_{h}\in \mathcal{C}_{h}$.

Under the same assumptions, we use 
the two-dimensional Ladyzhenskaya inequality
\[
\lVert\mathbf{v}\rVert_{L^{4}} \le C\lVert\mathbf{v}\rVert^{1/2}\lVert\nabla \mathbf{v}\rVert^{1/2}
\]
for $\mathbf{v}\in H_{0}^{1}(\Omega)^{2}$ in the homogeneous Dirichlet case
and for $\mathbf{v}\in H_{\mathrm{per}}^{1}(\Omega)^{2}$ with zero spatial mean in the periodic case.

We also use the inverse inequality for discrete functions
\[
\lVert\mathbf{v}_h\rVert_{L^{4}} \le C h^{-1/2}\lVert\mathbf{v}_h\rVert,
\qquad
\lVert\nabla \mathbf{v}_h\rVert \le C h^{-1}\lVert\mathbf{v}_h\rVert.
\]

We first establish some basic properties of the coefficients $b_{n,k}^{\left(\nu\right)}$ in (\ref{eq:coeffs_b}).

\begin{lemma}
\label{lem:prop_b}
Let $0<\nu_{*}\leq \nu\left(t\right)\leq\nu^{*}<1$, $t\in\left[0,T\right]$.
Then the coefficients $b^{\left(\nu\right)}_{n,k}$
have the following properties:
\begin{align}
 & b^{\left(\nu\right)}_{n,n}>b^{\left(\nu\right)}_{n,n-1}>...>b^{\left(\nu\right)}_{n,1}>0,\label{eq:prop_b1}\\
 & \sum^{n}_{k=1}b^{\left(\nu\right)}_{n,k}=\frac{t^{1-\nu\left(t_{n}\right)}_{n}}{\Gamma\left(2-\nu\left(t_{n}\right)\right)},\qquad t_{n}\in\mathcal{I}_{N}, \label{eq:prop_b3}\\
 & {\displaystyle \sum^{n}_{k=1}b^{\left(\nu\right)}_{k,1}\leq\gamma,\qquad\gamma=\left\{ \begin{array}{ll}
\frac{T^{1-\nu^{*}}}{1-\nu^{*}}, & T\leq1,\\
\frac{1}{1-\nu^{*}}+\frac{T^{1-\nu_{*}}-1}{1-\nu_{*}}, & T>1.
\end{array}\right.}\label{eq:prop_b2}
\end{align}
\end{lemma}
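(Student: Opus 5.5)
The plan rests on an integral representation of the coefficients. Fix $n$ and write $\nu_n=\nu(t_n)$. Since $\frac{d}{ds}\bigl(-(t_n-s)^{1-\nu_n}\bigr)=(1-\nu_n)(t_n-s)^{-\nu_n}$ and $\Gamma(2-\nu_n)=(1-\nu_n)\Gamma(1-\nu_n)$, definition (\ref{eq:coeffs_b}) can be rewritten as
\[
b^{(\nu)}_{n,k}=\frac{1}{\Gamma(1-\nu_n)}\int_{t_{k-1}}^{t_k}(t_n-s)^{-\nu_n}\,ds,\qquad 1\le k\le n .
\]
All three properties will be derived from this formula.

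For (\ref{eq:prop_b1}), positivity is immediate because the integrand is positive. For monotonicity, substitute $s=t_{k-1}+\sigma$ with $\sigma\in(0,\tau)$, so that
\[
b^{(\nu)}_{n,k}=\frac{1}{\Gamma(1-\nu_n)}\int_0^\tau\bigl(t_n-t_{k-1}-\sigma\bigr)^{-\nu_n}d\sigma .
\]
The map $s\mapsto (t_n-s)^{-\nu_n}$ is strictly increasing on $[0,t_n)$. Hence the integrand grows strictly with $k$, which gives $b^{(\nu)}_{n,k}<b^{(\nu)}_{n,k+1}$. The singularity at $s=t_n$ in the last interval is integrable because $\nu_n<1$.

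For (\ref{eq:prop_b3}), either the integral form or (\ref{eq:coeffs_b}) directly shows that the sum telescopes:
\[
\sum_{k=1}^n b^{(\nu)}_{n,k}=\frac{(t_n-t_0)^{1-\nu_n}-(t_n-t_n)^{1-\nu_n}}{\Gamma(2-\nu_n)}=\frac{t_n^{1-\nu_n}}{\Gamma(2-\nu_n)} .
\]

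For (\ref{eq:prop_b2}), the main point, I take the first coefficient of each row $k$. The substitution $r=t_k-s$ gives
\[
b^{(\nu)}_{k,1}=\frac{1}{\Gamma(1-\nu_k)}\int_{t_{k-1}}^{t_k}r^{-\nu_k}\,dr\le\int_{t_{k-1}}^{t_k}r^{-\nu_k}\,dr ,
\]
using $\Gamma(x)\ge1$ for $x\in(0,1]$ with $x=1-\nu_k$. Note that this uses the $\Gamma(1-\nu)$ form: bounding through $\Gamma(2-\nu)<1$ would lose the constant. The main subtlety is that the exponent $\nu_k$ changes from one interval to the next, so the sum is not a single integral. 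I would handle this pointwise in $r$. If $r\le1$, then $r^{-\nu_k}\le r^{-\nu^*}$; if $r\ge1$, then $r^{-\nu_k}\le r^{-\nu_*}$. Summing over $k=1,\dots,n$ and enlarging to $[0,T]$ gives
\[
\sum_{k=1}^n b^{(\nu)}_{k,1}\le\int_0^{\min(T,1)}r^{-\nu^*}dr+\int_{1}^{\max(T,1)}r^{-\nu_*}dr ,
\]
where the second integral is absent when $T\le1$. This evaluates to $T^{1-\nu^*}/(1-\nu^*)$ when $T\le1$, and to $\frac{1}{1-\nu^*}+\frac{T^{1-\nu_*}-1}{1-\nu_*}$ when $T>1$, which is exactly $\gamma$.
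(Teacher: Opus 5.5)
Your proof is correct and follows essentially the same route as the paper: the telescoping sum, the representation $b^{(\nu)}_{k,1}=\frac{1}{\Gamma(1-\nu_k)}\int_{t_{k-1}}^{t_k}s^{-\nu_k}\,ds$ with $\Gamma(1-\nu_k)>1$, and the pointwise comparisons $s^{-\nu_k}\le s^{-\nu^*}$ on $(0,1]$ and $s^{-\nu_k}\le s^{-\nu_*}$ on $[1,\infty)$ are exactly the paper's steps. The differences are cosmetic. You get monotonicity from the strictly increasing integrand rather than from concavity of $x^{1-\nu_n}$, which is an equivalent argument. You also enlarge the integral to $[0,T]$ instead of splitting into the cases $t_n\le 1$ and $t_n>1$.
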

\begin{proof}
Eq. (\ref{eq:prop_b3}) is verified directly. Let $\nu_{n}=\nu\left(t_{n}\right)$, $t_{n}\in\mathcal{I}_{N}$.
Since the function $f\left(x\right)=x^{1-\nu_{n}}$ is increasing
and concave on $\left(0,\infty\right)$, the increments $f\left(m+1\right)-f\left(m\right)=\left(m+1\right)^{1-\nu_{n}}-m^{1-\nu_{n}}$, $m\geq 0$,
are positive and decrease as $m$ increases. 
Taking $m=n-k$, and noting that $b_{n,k}^{(\nu)}$ is obtained from these increments by multiplication by the positive factor
${\displaystyle \frac{\tau^{1-\nu_n}}{\Gamma\left(2-\nu_n\right)}}$,
we obtain (\ref{eq:prop_b1}).

Finally, by the definition of $b_{k,1}^{\left(\nu\right)}$, for each $k\geq 1$,
\[
b^{\left(\nu\right)}_{k,1}=\frac{t^{1-\nu_{k}}_{k}-t^{1-\nu_{k}}_{k-1}}{\Gamma\left(2-\nu_{k}\right)}=\frac{1}{\Gamma\left(1-\nu_{k}\right)}\int^{t_{k}}_{t_{k-1}}s^{-\nu_{k}}ds
\leq \int^{t_{k}}_{t_{k-1}}s^{-\nu_{k}}ds,
\]
since $\Gamma\left(1-\nu_{k}\right)>1$ as $1-\nu_{k}\in\left(0,1\right)$.

If $T\leq 1$, then $0<s\leq t_{n}\leq T\leq 1$. 
Since $s^{-\alpha}$ is increasing with respect to $\alpha$ for $0<s\leq 1$, we have $s^{-\nu_{k}}\leq s^{-\nu^{*}}$. 
Therefore,
\[
\sum^{n}_{k=1}b^{\left(\nu\right)}_{k,1}\leq\int^{t_{n}}_{0}s^{-\nu^{*}}ds=\frac{t^{1-\nu^{*}}_{n}}{1-\nu^{*}}\leq\frac{T^{1-\nu^{*}}}{1-\nu^{*}}.
\]

Now suppose $T>1$. If $t_{n}\leq 1$, then the same argument gives
\[
\sum^{n}_{k=1}b^{\left(\nu\right)}_{k,1}\leq\frac{1}{1-\nu^{*}}
\leq\frac{1}{1-\nu^{*}}+\frac{T^{1-\nu_{*}}-1}{1-\nu_{*}}.
\]

If $t_{n}>1$, then on $\left(0,1\right]$, we use $s^{-\nu_{k}}\leq s^{-\nu^{*}}$,
while on $\left[1,t_{n}\right]$, we use $s^{-\nu_{k}}\leq s^{-\nu_{*}}$
since $s^{-\alpha}$ is decreasing with respect to $\alpha$ for $s\geq 1$.
Therefore,
\[
\sum^{n}_{k=1}b^{\left(\nu\right)}_{k,1}\leq\int^{1}_{0}s^{-\nu^{*}}ds+\int^{t_{n}}_{1}s^{-\nu_{*}}ds=\frac{1}{1-\nu^{*}}+\frac{t^{1-\nu_{*}}_{n}-1}{1-\nu_{*}}\leq\frac{1}{1-\nu^{*}}+\frac{T^{1-\nu_{*}}-1}{1-\nu_{*}},
\]
since $t_{n}\leq T$.
\end{proof}

Now we derive a discrete energy estimate for the fully discrete solution.
For this purpose, we introduce a corrected discrete memory functional $\mathcal{E}^{(\nu)}_{\phi,n}=\Theta^{(\nu)}_{\phi,n}-R^{(\nu)}_{\phi,n}$.
\begin{lemma}
\label{lem:frac}Given the sequence $\left\{ \phi^{n}\right\}_{n\geq 0}$, $\phi^{n}\in L^{2}\left(\Omega\right)$,
define
\begin{align*}
 & \Theta^{\left(\nu\right)}_{\phi,0}=0,\qquad\Theta^{\left(\nu\right)}_{\phi,n}=\frac{1}{2}\sum^{n}_{k=1}b^{\left(\nu\right)}_{n,k}\lVert\phi^{k}\rVert^{2},\quad n\geq1,\\
 & \delta^{\left(\nu\right)}_{n,k}=b^{\left(\nu\right)}_{n,k+1}-b^{\left(\nu\right)}_{n-1,k},\qquad\left(\delta^{\left(\nu\right)}_{n,k}\right)_{+}=\max\left\{ \delta^{\left(\nu\right)}_{n,k},0\right\},\\
 & R^{\left(\nu\right)}_{\phi,0}=R^{\left(\nu\right)}_{\phi,1}=0,\qquad R^{\left(\nu\right)}_{\phi,n}=\frac{1}{2}\sum^{n}_{j=2}\sum^{j-1}_{k=1}\left(\delta^{\left(\nu\right)}_{j,k}\right)_{+}\lVert\phi^{k}\rVert^{2},\quad n\geq2,
\end{align*}
and 
\begin{equation}
\mathcal{E}^{\left(\nu\right)}_{\phi,n}=\Theta^{\left(\nu\right)}_{\phi,n}-R^{\left(\nu\right)}_{\phi,n}.\label{eq:lemfrac_E}
\end{equation}

Then for all $n\geq1$,
\[
\left(\Delta^{\nu\left(t_{n}\right)}_{\tau}\phi^{n},\phi^{n}\right)\geq\frac{1}{\tau}\left(\mathcal{E}^{\left(\nu\right)}_{\phi,n}-\mathcal{E}^{\left(\nu\right)}_{\phi,n-1}\right)-\frac{1}{2\tau}b^{\left(\nu\right)}_{n,1}\lVert \phi^{0}\rVert ^{2}.
\]
\end{lemma}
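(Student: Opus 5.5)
The proof is a direct algebraic estimate: expand $\tau\left(\Delta^{\nu(t_n)}_{\tau}\phi^{n},\phi^{n}\right)$ using the first form of (\ref{eq:discrete_caputo}), bound the cross terms by Young's inequality, and then regroup so that the telescoping difference $\Theta^{(\nu)}_{\phi,n}-\Theta^{(\nu)}_{\phi,n-1}$ appears. The nonnegative part of the leftover coefficients is then exactly $R^{(\nu)}_{\phi,n}-R^{(\nu)}_{\phi,n-1}$.

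\emph{Expansion and Young's inequality.} Write
\[
\tau\left(\Delta^{\nu(t_n)}_{\tau}\phi^{n},\phi^{n}\right)=b^{(\nu)}_{n,n}\lVert\phi^{n}\rVert^{2}-\sum_{k=1}^{n-1}\left(b^{(\nu)}_{n,k+1}-b^{(\nu)}_{n,k}\right)(\phi^{k},\phi^{n})-b^{(\nu)}_{n,1}(\phi^{0},\phi^{n}).
\]
By (\ref{eq:prop_b1}) of Lemma \ref{lem:prop_b}, all coefficients $b^{(\nu)}_{n,k+1}-b^{(\nu)}_{n,k}$ and $b^{(\nu)}_{n,1}$ are positive. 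This is the only place the monotonicity is needed: it lets me apply $(\phi^{k},\phi^{n})\le\frac12\lVert\phi^{k}\rVert^{2}+\frac12\lVert\phi^{n}\rVert^{2}$ term by term without flipping inequality signs. The coefficients attached to $\lVert\phi^n\rVert^2$ telescope to $\sum_{k=1}^{n-1}(b^{(\nu)}_{n,k+1}-b^{(\nu)}_{n,k})+b^{(\nu)}_{n,1}=b^{(\nu)}_{n,n}$. This gives
\[
\tau\left(\Delta^{\nu(t_n)}_{\tau}\phi^{n},\phi^{n}\right)\ge\frac12 b^{(\nu)}_{n,n}\lVert\phi^{n}\rVert^{2}-\frac12\sum_{k=1}^{n-1}\left(b^{(\nu)}_{n,k+1}-b^{(\nu)}_{n,k}\right)\lVert\phi^{k}\rVert^{2}-\frac12 b^{(\nu)}_{n,1}\lVert\phi^{0}\rVert^{2}.
\]

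\emph{Regrouping.} For $n\ge2$,
\[
\Theta^{(\nu)}_{\phi,n}-\Theta^{(\nu)}_{\phi,n-1}=\frac12 b^{(\nu)}_{n,n}\lVert\phi^{n}\rVert^{2}+\frac12\sum_{k=1}^{n-1}\left(b^{(\nu)}_{n,k}-b^{(\nu)}_{n-1,k}\right)\lVert\phi^{k}\rVert^{2}.
\]
Adding and subtracting $b^{(\nu)}_{n,k}$ inside the sum above, the right-hand side becomes
\[
\Theta^{(\nu)}_{\phi,n}-\Theta^{(\nu)}_{\phi,n-1}-\frac12\sum_{k=1}^{n-1}\delta^{(\nu)}_{n,k}\lVert\phi^{k}\rVert^{2}-\frac12 b^{(\nu)}_{n,1}\lVert\phi^{0}\rVert^{2}.
\]
Since $\delta^{(\nu)}_{n,k}\le(\delta^{(\nu)}_{n,k})_{+}$, the middle sum is bounded below by $-\frac12\sum_{k=1}^{n-1}(\delta^{(\nu)}_{n,k})_{+}\lVert\phi^{k}\rVert^{2}$. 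By the definition of $R^{(\nu)}_{\phi,n}$, this equals $-(R^{(\nu)}_{\phi,n}-R^{(\nu)}_{\phi,n-1})$; this also holds at $n=2$, because $R^{(\nu)}_{\phi,1}=0$. Dividing by $\tau$ yields the claim for $n\ge2$.

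\emph{The case $n=1$.} Here $\Theta^{(\nu)}_{\phi,1}=\frac12 b^{(\nu)}_{1,1}\lVert\phi^1\rVert^2$, $\Theta^{(\nu)}_{\phi,0}=0$ and $R^{(\nu)}_{\phi,0}=R^{(\nu)}_{\phi,1}=0$. So it suffices to apply Young's inequality to $b^{(\nu)}_{1,1}\left(\lVert\phi^1\rVert^2-(\phi^0,\phi^1)\right)$, which gives $\frac12 b^{(\nu)}_{1,1}\lVert\phi^1\rVert^2-\frac12 b^{(\nu)}_{1,1}\lVert\phi^0\rVert^2$.

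\emph{Main obstacle.} The delicate step is the index bookkeeping in the regrouping. One has to check that the leftover coefficient of $\lVert\phi^k\rVert^2$ is exactly $b^{(\nu)}_{n,k+1}-b^{(\nu)}_{n-1,k}=\delta^{(\nu)}_{n,k}$, with the shift in both indices. In the constant-order case this quantity vanishes identically. For variable order it has no sign, which is precisely why the correction $R^{(\nu)}_{\phi,n}$ is introduced and only its positive part is discarded.
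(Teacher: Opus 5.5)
Your proof is correct and follows the paper's argument essentially step for step. It uses the same expansion with term-by-term Young's inequality, justified by the monotonicity (\ref{eq:prop_b1}); the same identity $b^{(\nu)}_{n,k}-b^{(\nu)}_{n-1,k}=-(b^{(\nu)}_{n,k+1}-b^{(\nu)}_{n,k})+\delta^{(\nu)}_{n,k}$ to produce $\Theta^{(\nu)}_{\phi,n}-\Theta^{(\nu)}_{\phi,n-1}$; and the same replacement of $\delta^{(\nu)}_{n,k}$ by its positive part to match $R^{(\nu)}_{\phi,n}-R^{(\nu)}_{\phi,n-1}$. Your separate treatment of $n=1$ is a harmless extra, which the paper covers implicitly via empty sums, and the term you add and subtract should be $b^{(\nu)}_{n-1,k}$ (or $b^{(\nu)}_{n,k+1}$, depending on which sum you mean) rather than $b^{(\nu)}_{n,k}$, though your resulting formula is right.
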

\begin{proof}
For fixed $n$, the order $\nu\left(t_{n}\right)$ is fixed. Hence
by (\ref{eq:discrete_caputo}), (\ref{eq:prop_b1}), summation by parts,
and Young's inequality, we obtain
\begin{align}
\left(\Delta^{\nu\left(t_{n}\right)}_{\tau}\phi^{n},\phi^{n}\right) & =\frac{1}{\tau}\left[b^{\left(\nu\right)}_{n,n}\lVert \phi^{n}\rVert ^{2}-\sum^{n-1}_{k=1}\left(b^{\left(\nu\right)}_{n,k+1}-b^{\left(\nu\right)}_{n,k}\right)\left(\phi^{k},\phi^{n}\right)-b^{\left(\nu\right)}_{n,1}\left(\phi^{0},\phi^{n}\right)\right]\nonumber\displaybreak[0]\\
 & \geq\frac{1}{2\tau}\left[b^{\left(\nu\right)}_{n,n}\lVert \phi^{n}\rVert ^{2}-\sum^{n-1}_{k=1}\left(b^{\left(\nu\right)}_{n,k+1}-b^{\left(\nu\right)}_{n,k}\right)\lVert \phi^{k}\rVert ^{2}-b^{\left(\nu\right)}_{n,1}\lVert \phi^{0}\rVert ^{2}\right],\label{eq:lemfrac_step4}
\end{align}
where we used ${\displaystyle \sum^{n-1}_{k=1}\left(b^{\left(\nu\right)}_{n,k+1}-b^{\left(\nu\right)}_{n,k}\right)+b^{\left(\nu\right)}_{n,1}=b^{\left(\nu\right)}_{n,n}}$.

By construction,
\[
2\left(\Theta^{\left(\nu\right)}_{\phi,n}-\Theta^{\left(\nu\right)}_{\phi,n-1}\right)=b^{\left(\nu\right)}_{n,n}\lVert \phi^{n}\rVert ^{2}+\sum^{n-1}_{k=1}\left(b^{\left(\nu\right)}_{n,k}-b^{\left(\nu\right)}_{n-1,k}\right)\lVert \phi^{k}\rVert ^{2}
\]
and $b^{\left(\nu\right)}_{n,k}-b^{\left(\nu\right)}_{n-1,k}=-\left(b^{\left(\nu\right)}_{n,k+1}-b^{\left(\nu\right)}_{n,k}\right)+\delta^{\left(\nu\right)}_{n,k}$,
therefore
\begin{equation}
2\left(\Theta^{\left(\nu\right)}_{\phi,n}-\Theta^{\left(\nu\right)}_{\phi,n-1}\right)=b^{\left(\nu\right)}_{n,n}\lVert \phi^{n}\rVert ^{2}-\sum^{n-1}_{k=1}\left(b^{\left(\nu\right)}_{n,k+1}-b^{\left(\nu\right)}_{n,k}\right)\lVert \phi^{k}\rVert ^{2}+\sum^{n-1}_{k=1}\delta^{\left(\nu\right)}_{n,k}\lVert \phi^{k}\rVert ^{2}.\label{eq:lemfrac_step6}
\end{equation}

Using (\ref{eq:lemfrac_step6}), we conclude from (\ref{eq:lemfrac_step4}) that
\[
\left(\Delta^{\nu\left(t_{n}\right)}_{\tau}\phi^{n},\phi^{n}\right)\geq\frac{1}{\tau}\left(\Theta^{\left(\nu\right)}_{\phi,n}-\Theta^{\left(\nu\right)}_{\phi,n-1}\right)-\frac{1}{2\tau}\sum^{n-1}_{k=1}\delta^{\left(\nu\right)}_{n,k}\lVert \phi^{k}\rVert ^{2}-\frac{1}{2\tau}b^{\left(\nu\right)}_{n,1}\lVert \phi^{0}\rVert ^{2}.
\]

Since $-\delta^{\left(\nu\right)}_{n,k}\geq-\left(\delta^{\left(\nu\right)}_{n,k}\right)_{+}$, we have
\begin{equation}
\left(\Delta^{\nu\left(t_{n}\right)}_{\tau}\phi^{n},\phi^{n}\right)\geq\frac{1}{\tau}\left(\Theta^{\left(\nu\right)}_{\phi,n}-\Theta^{\left(\nu\right)}_{\phi,n-1}\right)-\frac{1}{2\tau}\sum^{n-1}_{k=1}\left(\delta^{\left(\nu\right)}_{n,k}\right)_{+}\lVert \phi^{k}\rVert ^{2}-\frac{1}{2\tau}b^{\left(\nu\right)}_{n,1}\lVert \phi^{0}\rVert ^{2}.\label{eq:lem_2}
\end{equation}

By definition of $R^{\left(\nu\right)}_{\phi,n}$, we have
\begin{equation}
R^{\left(\nu\right)}_{\phi,n}-R^{\left(\nu\right)}_{\phi,n-1}=\frac{1}{2}\sum^{n-1}_{k=1}\left(\delta^{\left(\nu\right)}_{n,k}\right)_{+}\lVert \phi^{k}\rVert ^{2}.\label{eq:lemfrac_step7}
\end{equation}

Therefore, combining (\ref{eq:lem_2}), (\ref{eq:lemfrac_step7})
and (\ref{eq:lemfrac_E}), we immediately arrive at the assertion
of the lemma.
\end{proof}

\begin{lemma}[A corrected discrete energy estimate]\label{lem:corrected_energy}
Let $(\mathbf{u}^{n}_{h},p^{n}_{h},\mathbf{B}^{n}_{h})\in \mathcal{V}_{h}\times\mathcal{Q}_{h}\times\mathcal{C}_{h}$
be the solution of Problem \ref{prob:fully}. Suppose Assumptions \ref{assu:rhs} and \ref{assu:orders_bounds}
hold, and let $\mathcal{E}^{\left(\alpha\right)}_{\mathbf{u},n}$
and $\mathcal{E}^{\left(\beta\right)}_{\mathbf{B},n}$ be defined
as in Lemma \ref{lem:frac}. Then the fully discrete solution satisfies the corrected energy estimate
\begin{alignat*}{1}
 & \mathcal{E}^{\left(\alpha\right)}_{\mathbf{u},n}+\mathcal{E}^{\left(\beta\right)}_{\mathbf{B},n}
+\frac{\tau}{2}\sum^{n}_{k=1}\left(\frac{1}{\mathrm{Re}}\lVert \nabla\mathbf{u}^{k}_{h}\rVert ^{2}+\frac{1}{\mathrm{Rm}}\lVert \nabla\mathbf{B}^{k}_{h}\rVert ^{2}+\zeta\lVert \nabla\cdot\mathbf{u}^{k}_{h}\rVert ^{2}+\chi\lVert \nabla\cdot\mathbf{B}^{k}_{h}\rVert ^{2}\right)\\
 & \leq C\left(\lVert \mathbf{u}^{0}_{h}\rVert ^{2}+\lVert \mathbf{B}^{0}_{h}\rVert ^{2}\right)+C\tau\sum^{n}_{k=1}\left(\lVert \mathbf{f}^{k}\rVert ^{2}_{H^{-1}}+\lVert \mathbf{g}^{k}\rVert ^{2}_{H^{-1}}\right)
\end{alignat*}
for all $1\leq n\leq N$, where $C>0$ is independent of $h$ and
$\tau$.
\end{lemma}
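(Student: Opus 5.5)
Test \eqref{eq:fully_u1} with $\mathbf{v}_h=\mathbf{u}^{k}_{h}$, \eqref{eq:fully_B1} with $\mathbf{C}_h=\mathbf{B}^{k}_{h}$, and \eqref{eq:fully_u2} with $q_h=p^{k}_{h}$, then add the two equations. The pressure term vanishes by \eqref{eq:fully_u2}. By \eqref{eq:trilinear_prop_c}, $\ell(\mathbf{u}^k_h;\mathbf{u}^k_h,\mathbf{u}^k_h)=0$ and $\ell(\mathbf{u}^k_h;\mathbf{B}^k_h,\mathbf{B}^k_h)=0$. Again by \eqref{eq:trilinear_prop_c}, $\ell(\mathbf{B}^k_h;\mathbf{B}^k_h,\mathbf{u}^k_h)=-\ell(\mathbf{B}^k_h;\mathbf{u}^k_h,\mathbf{B}^k_h)$, so the two Lorentz/induction coupling terms $-\ell(\mathbf{B}^k_h;\mathbf{B}^k_h,\mathbf{u}^k_h)-\ell(\mathbf{B}^k_h;\mathbf{u}^k_h,\mathbf{B}^k_h)$ cancel exactly. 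This is the only place where the coupling structure enters, and it requires no smallness of the data. What remains is
\[
(\Delta^{\alpha(t_k)}_\tau\mathbf{u}^k_h,\mathbf{u}^k_h)+(\Delta^{\beta(t_k)}_\tau\mathbf{B}^k_h,\mathbf{B}^k_h)+\tfrac{1}{\mathrm{Re}}\lVert\nabla\mathbf{u}^k_h\rVert^2+\tfrac{1}{\mathrm{Rm}}\lVert\nabla\mathbf{B}^k_h\rVert^2+\zeta\lVert\nabla\cdot\mathbf{u}^k_h\rVert^2+\chi\lVert\nabla\cdot\mathbf{B}^k_h\rVert^2=\langle\mathbf{f}^k,\mathbf{u}^k_h\rangle+\langle\mathbf{g}^k,\mathbf{B}^k_h\rangle .
\]

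For the right-hand side I use the duality bound together with the Poincar\'e inequality on $\mathcal{V}_h,\mathcal{C}_h$, so that $\lVert\mathbf{u}^k_h\rVert_{H^1}\le C\lVert\nabla\mathbf{u}^k_h\rVert$ in both the Dirichlet and zero-mean periodic cases. Young's inequality then gives $\langle\mathbf{f}^k,\mathbf{u}^k_h\rangle\le\frac{1}{2\mathrm{Re}}\lVert\nabla\mathbf{u}^k_h\rVert^2+C\,\mathrm{Re}\lVert\mathbf{f}^k\rVert_{H^{-1}}^2$, and the same for $\mathbf{g}$ with $\mathrm{Rm}$. This absorbs half of each viscous term. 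Keeping the full $\zeta,\chi$ terms, or halving them to match the statement, is harmless.

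For the fractional terms I apply Lemma \ref{lem:frac} with $\phi=\mathbf{u}_h$ and $\phi=\mathbf{B}_h$. This gives $(\Delta^{\alpha(t_k)}_\tau\mathbf{u}^k_h,\mathbf{u}^k_h)\ge\tau^{-1}(\mathcal{E}^{(\alpha)}_{\mathbf{u},k}-\mathcal{E}^{(\alpha)}_{\mathbf{u},k-1})-\frac{1}{2\tau}b^{(\alpha)}_{k,1}\lVert\mathbf{u}^0_h\rVert^2$, and similarly for $\mathbf{B}$. I then multiply by $\tau$ and sum over $k=1,\dots,n$. The memory functionals telescope, with $\mathcal{E}_{\cdot,0}=0$ since $\Theta_{\cdot,0}=R_{\cdot,0}=0$. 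The initial-data contributions are $\frac12\sum_{k=1}^n b^{(\alpha)}_{k,1}\lVert\mathbf{u}^0_h\rVert^2\le\frac{\gamma_\alpha}{2}\lVert\mathbf{u}^0_h\rVert^2$ by \eqref{eq:prop_b2}, where $\gamma_\alpha$ is the constant $\gamma$ of Lemma \ref{lem:prop_b} for $\nu=\alpha$. By Assumption \ref{assu:orders_bounds} this constant depends only on $T,\alpha_*,\alpha^*$, and likewise for $\beta$. This yields exactly the asserted inequality, with $C=\max\{\gamma_\alpha,\gamma_\beta,C\,\mathrm{Re},C\,\mathrm{Rm}\}$ independent of $h$ and $\tau$.

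The main obstacle is not the algebra but making sure no hidden $\tau$- or $h$-dependence enters through the variable-order kernels. The time dependence of $b^{(\nu)}_{n,k}$ is already absorbed into the corrected functional $\mathcal{E}=\Theta-R$ by Lemma \ref{lem:frac}, so the estimate is stated for $\mathcal{E}$ and not for $\Theta$. I will therefore not attempt to show $\mathcal{E}\ge0$ or to bound $R$ here; that is left for the subsequent stability theorem, which uses the complementary kernel bounds. The only kernel quantity that must be controlled uniformly is $\sum_k b^{(\nu)}_{k,1}$, which is exactly \eqref{eq:prop_b2}. A secondary check is that the Poincar\'e constant used in the $H^{-1}$ duality is independent of $h$, which holds because the finite element spaces are conforming.
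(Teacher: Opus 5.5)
Your proposal is correct and follows the paper's proof essentially step by step. You test with the discrete solution, cancel the pressure term and the convective and Lorentz/induction terms via \eqref{eq:trilinear_prop_c}, and apply Lemma~\ref{lem:frac}; you then absorb the forcing into half of the viscous terms by Young's inequality, sum and telescope using $\mathcal{E}_{\cdot,0}=0$, and bound $\sum_{k} b^{(\nu)}_{k,1}$ by \eqref{eq:prop_b2}, which is the same sequence as the paper's (your explicit checks of the coupling cancellation and of the $h$-independence of the Poincar\'e constant only add detail the paper leaves implicit).
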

\begin{proof}
Choose $\left(\mathbf{v}_{h},q_{h},\mathbf{C}_{h}\right)=\left(\mathbf{u}^{n}_{h},p^{n}_{h},\mathbf{B}^{n}_{h}\right)$
in (\ref{eq:fully_u1})–(\ref{eq:fully_u2}), then sum the resulting identities, and use (\ref{eq:trilinear_prop_c}) to obtain
\begin{align}
 & \left(\Delta^{\alpha\left(t_{n}\right)}_{\tau}\mathbf{u}^{n}_{h},\mathbf{u}^{n}_{h}\right)
  +\left(\Delta^{\beta\left(t_{n}\right)}_{\tau}\mathbf{B}^{n}_{h},\mathbf{B}^{n}_{h}\right)
  +\frac{1}{\mathrm{Re}}\lVert \nabla\mathbf{u}^{n}_{h}\rVert ^{2}
  +\frac{1}{\mathrm{Rm}}\lVert \nabla\mathbf{B}^{n}_{h}\rVert ^{2}\nonumber\\
 &+\zeta\lVert \nabla\cdot\mathbf{u}^{n}_{h}\rVert ^{2}
  +\chi\lVert \nabla\cdot\mathbf{B}^{n}_{h}\rVert ^{2}
  =\left<\mathbf{f}^{n},\mathbf{u}^{n}_{h}\right>
  +\left<\mathbf{g}^{n},\mathbf{B}^{n}_{h}\right>.\label{eq:stab_step2}
\end{align}

Applying Lemma \ref{lem:frac}, we obtain
\begin{alignat*}{1}
 & \left(\Delta^{\alpha\left(t_{n}\right)}_{\tau}\mathbf{u}^{n}_{h},\mathbf{u}^{n}_{h}\right)\geq\frac{1}{\tau}\left(\mathcal{E}^{\left(\alpha\right)}_{\mathbf{u},n}-\mathcal{E}^{\left(\alpha\right)}_{\mathbf{u},n-1}\right)-\frac{1}{2\tau}b^{\left(\alpha\right)}_{n,1}\lVert \mathbf{u}^{0}_{h}\rVert ^{2},\\
 & \left(\Delta^{\beta\left(t_{n}\right)}_{\tau}\mathbf{B}^{n}_{h},\mathbf{B}^{n}_{h}\right)\geq\frac{1}{\tau}\left(\mathcal{E}^{\left(\beta\right)}_{\mathbf{B},n}-\mathcal{E}^{\left(\beta\right)}_{\mathbf{B},n-1}\right)-\frac{1}{2\tau}b^{\left(\beta\right)}_{n,1}\lVert \mathbf{B}^{0}_{h}\rVert ^{2}.
\end{alignat*}

The terms in the right-hand side are estimated as follows:
\[
\left|\left<\mathbf{f}^{n},\mathbf{u}^{n}_{h}\right>\right|+
\left|\left<\mathbf{g}^{n},\mathbf{B}^{n}_{h}\right>\right|\leq
\frac{1}{2\mathrm{Re}}\lVert \nabla\mathbf{u}^{n}_{h}\rVert ^{2}+
\frac{1}{2\mathrm{Rm}}\lVert \nabla\mathbf{B}^{n}_{h}\rVert ^{2}+
C\left(\Vert \mathbf{f}^{n}\Vert ^{2}_{H^{-1}}+\Vert \mathbf{g}^{n}\Vert ^{2}_{H^{-1}}\right).
\]

Using the estimates derived above in (\ref{eq:stab_step2}),
then summing the resulting inequalities
over $n$, we obtain
\[
\mathcal{E}^{\left(\alpha\right)}_{\mathbf{u},n}+\mathcal{E}^{\left(\beta\right)}_{\mathbf{B},n}
+\frac{\tau}{2\mathrm{Re}}\sum^{n}_{k=1}\lVert \nabla\mathbf{u}^{k}_{h}\rVert ^{2}+\frac{\tau}{2\mathrm{Rm}}\sum^{n}_{k=1}\lVert \nabla\mathbf{B}^{k}_{h}\rVert ^{2}+\zeta\tau\sum^{n}_{k=1}\lVert \nabla\cdot\mathbf{u}^{k}_{h}\rVert ^{2}+\chi\tau\sum^{n}_{k=1}\lVert \nabla\cdot\mathbf{B}^{k}_{h}\rVert ^{2}
\]
\[
\leq\mathcal{E}^{\left(\alpha\right)}_{\mathbf{u},0}+\mathcal{E}^{\left(\beta\right)}_{\mathbf{B},0}+
\frac{1}{2}\left(\sum^{n}_{k=1}b^{\left(\alpha\right)}_{k,1}\right)\lVert \mathbf{u}^{0}_{h}\rVert ^{2}+\frac{1}{2}\left(\sum^{n}_{k=1}b^{\left(\beta\right)}_{k,1}\right)\lVert \mathbf{B}^{0}_{h}\rVert ^{2}+C\tau\sum^{n}_{k=1}\left(\lVert \mathbf{f}^{k}\rVert ^{2}_{H^{-1}}+\lVert \mathbf{g}^{k}\rVert ^{2}_{H^{-1}}\right).
\]

Finally, taking into account (\ref{eq:prop_b2}), and $\mathcal{E}^{\left(\alpha\right)}_{\mathbf{u},0}=0$,
$\mathcal{E}^{\left(\beta\right)}_{\mathbf{B},0}=0$, we arrive
at the assertion of the lemma.
\end{proof}

\begin{lemma}
\label{lem:frac1}Given the sequence $\left\{ \phi^{n}\right\} ,\phi^{n}\in L^{2}\left(\Omega\right)$, the following inequality holds:
\[
\left(\Delta^{\nu\left(t_{n}\right)}_{\tau}\phi^{n},\phi^{n}\right)\geq\frac{1}{2}\Delta^{\nu\left(t_{n}\right)}_{\tau}\lVert \phi^{n}\rVert ^{2}.
\]
\end{lemma}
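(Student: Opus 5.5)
We prove the inequality by expanding both sides with the representation (\ref{eq:discrete_caputo}), which expresses $\Delta^{\nu(t_n)}_{\tau}$ as a positive-weighted combination of past values. For fixed $n$ the order $\nu(t_n)$ is frozen, so only the coefficients $b^{(\nu)}_{n,k}$ with this fixed $n$ enter; the difficulty of varying kernels from the preceding lemmas therefore does not arise. Write the operator as
\[
\tau\,\Delta^{\nu(t_n)}_{\tau}\phi^{n}=b^{(\nu)}_{n,n}\phi^{n}-\sum_{k=1}^{n-1}a_{k}\phi^{k}-b^{(\nu)}_{n,1}\phi^{0},\qquad a_{k}=b^{(\nu)}_{n,k+1}-b^{(\nu)}_{n,k}.
\]
By (\ref{eq:prop_b1}) we have $a_{k}>0$ and $b^{(\nu)}_{n,1}>0$. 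The weights also telescope: $\sum_{k=1}^{n-1}a_{k}+b^{(\nu)}_{n,1}=b^{(\nu)}_{n,n}$.

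Applied to the scalar sequence $\lVert\phi^{k}\rVert^{2}$, the same formula gives
\[
\tfrac{\tau}{2}\Delta^{\nu(t_n)}_{\tau}\lVert\phi^{n}\rVert^{2}=\tfrac12\Bigl(b^{(\nu)}_{n,n}\lVert\phi^{n}\rVert^{2}-\sum_{k=1}^{n-1}a_{k}\lVert\phi^{k}\rVert^{2}-b^{(\nu)}_{n,1}\lVert\phi^{0}\rVert^{2}\Bigr).
\]
The left-hand side of the lemma, multiplied by $\tau$, equals
\[
b^{(\nu)}_{n,n}\lVert\phi^{n}\rVert^{2}-\sum_{k=1}^{n-1}a_{k}(\phi^{k},\phi^{n})-b^{(\nu)}_{n,1}(\phi^{0},\phi^{n}).
\]

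To compare the two, apply Cauchy--Schwarz and Young's inequality to each cross term:
\[
(\phi^{k},\phi^{n})\le\tfrac12\lVert\phi^{k}\rVert^{2}+\tfrac12\lVert\phi^{n}\rVert^{2}.
\]
Since every weight $a_{k}$ and $b^{(\nu)}_{n,1}$ is positive, this yields
\[
\tau(\Delta^{\nu(t_n)}_{\tau}\phi^{n},\phi^{n})\ge b^{(\nu)}_{n,n}\lVert\phi^{n}\rVert^{2}-\tfrac12\Bigl(\sum_{k=1}^{n-1}a_k+b^{(\nu)}_{n,1}\Bigr)\lVert\phi^{n}\rVert^{2}-\tfrac12\sum_{k=1}^{n-1}a_{k}\lVert\phi^{k}\rVert^{2}-\tfrac12 b^{(\nu)}_{n,1}\lVert\phi^{0}\rVert^{2}.
\]
By the telescoping identity, the bracket equals $b^{(\nu)}_{n,n}$, so the first two terms combine to $\tfrac12 b^{(\nu)}_{n,n}\lVert\phi^{n}\rVert^{2}$. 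The right-hand side is then exactly $\frac{\tau}{2}\Delta^{\nu(t_n)}_{\tau}\lVert\phi^{n}\rVert^{2}$, and dividing by $\tau$ gives the lemma. This is essentially the computation leading to (\ref{eq:lemfrac_step4}), which we can cite directly.

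The only point needing care is the sign condition: Young's inequality may only be applied to terms carrying a negative sign, which requires positivity of all the weights $a_{k}$. This is precisely the monotonicity (\ref{eq:prop_b1}) from Lemma \ref{lem:prop_b}, valid for each fixed $n$ under $0<\nu(t_n)<1$.
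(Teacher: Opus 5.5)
Your proof is correct and is essentially the paper's argument: expand $\Delta^{\nu(t_n)}_{\tau}\phi^{n}$ via (\ref{eq:discrete_caputo}), apply Cauchy--Schwarz and Young's inequality to the cross terms, which carry positive weights by (\ref{eq:prop_b1}), and use the telescoping identity. This is the computation behind (\ref{eq:lemfrac_step4}), whose right-hand side is exactly $\frac12\Delta^{\nu(t_n)}_{\tau}\lVert\phi^{n}\rVert^{2}$.
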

\begin{proof}
This inequality immediately follows from the definition in \eqref{eq:discrete_caputo}, the Cauchy inequality, and Young’s inequality.
\end{proof}

For the stability and convergence analysis, we use the abstract discrete fractional Gr\"{o}nwall lemma of \cite{Liao_2019}. 
Its application to the present variable-order L1 discretization requires verification of Assumptions A1--A3 of \cite{Liao_2019} 
for the associated discrete kernels. For convenience, Appendix A recalls these assumptions, 
the complementary kernels, and the version of the discrete Gr\"{o}nwall theorem used here. 
It is shown below that the kernels generated by $\Delta_{\tau}^{\nu\left(t_{n}\right)}$ satisfy Assumptions A1--A3 
with comparison exponent $\nu_{*}$, provided
$0<\nu_{*}\le \nu\left(t\right)\le \nu^{*}<1$ on $\left[0,T\right]$.

\begin{lemma}\label{lem:check_frac_assump}
Assume that there exist constants $0<\nu_{*}<\nu^{*}<1$ such
that $\nu\left(t\right)\in\left[\nu_{*},\nu^{*}\right]$ for all
$t\in\left[0,T\right]$. For $1\leq k\leq n\leq N$, set $\nu_{n}=\nu\left(t_{n}\right)$, $t_{n}\in\mathcal{I}_{N}$,
and define
\[
A^{\left(n\right)}_{n-k}=\frac{1}{\tau}b^{\left(\nu\right)}_{n,k},
\]
or, equivalently,
\[
A^{\left(n\right)}_{n-k}=\frac{1}{\tau}\int^{t_{k}}_{t_{k-1}}\omega_{1-\nu_{n}}\left(t_{n}-s\right)ds,\qquad
\omega_{1-\nu}\left(t\right)=\frac{t^{-\nu}}{\Gamma\left(1-\nu\right)}.
\]

Then the kernels $\left\{ A^{\left(n\right)}_{n-k}\right\} $ satisfy
Assumptions A1–A3 of \cite{Liao_2019}, where the fixed exponent appearing in A2 is chosen as $\nu_{*}$,
and
\[
\rho=1,\qquad\pi_{A}=\frac{\Gamma\left(1-\nu^{*}\right)}{\Gamma\left(1-\nu_{*}\right)}\max\left\{ 1,T^{\nu^{*}-\nu_{*}}\right\} .
\]
\end{lemma}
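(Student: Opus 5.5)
We will verify the three assumptions of \cite{Liao_2019} (recalled in Appendix A) one at a time. Recall that A1 requires positivity and monotonicity of each row of kernels, $A^{(n)}_{0}\ge A^{(n)}_{1}\ge\dots\ge A^{(n)}_{n-1}>0$. A2 requires a lower bound of the form $A^{(n)}_{n-k}\ge \frac{1}{\pi_A\tau_k}\int_{t_{k-1}}^{t_k}\omega_{1-\nu_*}(t_n-s)\,ds$ with a fixed comparison exponent. A3 is a bound $\tau_{k}/\tau_{k-1}\le\rho$ on the step-size ratios.

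\textbf{A1 and A3.} First we check that the two forms of $A^{(n)}_{n-k}$ coincide. This follows by computing
\[
\int_{t_{k-1}}^{t_k}\omega_{1-\nu_n}(t_n-s)\,ds
=\frac{(t_n-t_{k-1})^{1-\nu_n}-(t_n-t_k)^{1-\nu_n}}{(1-\nu_n)\Gamma(1-\nu_n)}
=b^{(\nu)}_{n,k},
\]
which uses $(1-\nu_n)\Gamma(1-\nu_n)=\Gamma(2-\nu_n)$. Assumption A1 then follows directly from \eqref{eq:prop_b1} of Lemma \ref{lem:prop_b}: for each fixed $n$, $b^{(\nu)}_{n,n}>\dots>b^{(\nu)}_{n,1}>0$, and dividing by $\tau$ preserves this ordering. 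Assumption A3 is trivial because the mesh is uniform, so $\tau_k/\tau_{k-1}=1$ and we may take $\rho=1$.

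\textbf{A2.} The only real work is a pointwise comparison of the kernels $\omega_{1-\nu_n}$ and $\omega_{1-\nu_*}$ on $(0,T]$. For $x=t_n-s\in(0,t_n]\subset(0,T]$ we write
\[
\frac{\omega_{1-\nu_n}(x)}{\omega_{1-\nu_*}(x)}=\frac{\Gamma(1-\nu_*)}{\Gamma(1-\nu_n)}\,x^{\nu_*-\nu_n}
\]
and bound the two factors separately.

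For the Gamma factor, we use that $\Gamma$ is decreasing on $(0,1)$, since its minimum on $(0,\infty)$ lies near $1.46$. Because $1-\nu_n\ge1-\nu^*$, this gives $\Gamma(1-\nu_n)\le\Gamma(1-\nu^*)$, and hence the factor is at least $\Gamma(1-\nu_*)/\Gamma(1-\nu^*)$.

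For the power factor, the exponent satisfies $\nu_*-\nu_n\in[\nu_*-\nu^*,0]$, so we split into two cases:
\begin{itemize}
\item if $x\le1$, then $x^{\nu_*-\nu_n}\ge1$;
\item if $1<x\le T$, then $x^{\nu_*-\nu_n}\ge T^{\nu_*-\nu_n}\ge T^{\nu_*-\nu^*}$.
\end{itemize}
In both cases $x^{\nu_*-\nu_n}\ge 1/\max\{1,T^{\nu^*-\nu_*}\}$.

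Combining the two bounds yields $\omega_{1-\nu_n}(t_n-s)\ge \pi_A^{-1}\,\omega_{1-\nu_*}(t_n-s)$ for all $s\in(0,t_n)$, with $\pi_A$ exactly as stated in the lemma. Integrating this over $(t_{k-1},t_k)$ and dividing by $\tau$ gives A2 with comparison exponent $\nu_*$.

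\textbf{Main obstacle.} The key point is that $\pi_A$ must be uniform in $n$ even though the order $\nu_n$ changes from row to row of the kernel. We expect this to be the only delicate step. It hinges on handling the sign of $\log x$ correctly, which is why the cases $x\le1$ and $x>1$ must be separated and why the factor $\max\{1,T^{\nu^*-\nu_*}\}$ appears, and on the monotonicity of $\Gamma$ on $(0,1)$.
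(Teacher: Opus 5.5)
Your proposal is correct and follows the paper's proof essentially step by step. You use the same pointwise comparison $\omega_{1-\nu_n}(x)=\frac{\Gamma(1-\nu_*)}{\Gamma(1-\nu_n)}x^{\nu_*-\nu_n}\omega_{1-\nu_*}(x)$ for A2, bounding the Gamma factor and the power factor separately, and the uniform mesh gives A3 with $\rho=1$. The differences are only cosmetic: you get A1 by citing \eqref{eq:prop_b1} instead of re-deriving the monotonicity from the integral representation, you explicitly check that the two kernel formulas agree, and you quote A3 with the ratio $\tau_k/\tau_{k-1}$ instead of $\rho_k=\tau_k/\tau_{k+1}$, which is harmless on a uniform mesh.
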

\begin{proof}
For each fixed $n$, the function $f\left(x\right)=\omega_{1-\nu_{n}}\left(x\right)$
is positive and strictly decreasing on $\left(0,\infty\right)$. 
Therefore, the function $g\left(s\right)=f\left(t_{n}-s\right)$ is positive
and strictly increasing on $\left[0,t_{n}\right)$.
Since the mesh is uniform, for $k=1,...,n-1$,
\[
\int^{t_{k}}_{t_{k-1}}g\left(s\right)ds\leq\int^{t_{k+1}}_{t_{k}}g\left(s\right)ds.
\]

Therefore, 
\[
A^{\left(n\right)}_{n-k}\leq A^{\left(n\right)}_{n-k-1},\qquad k=1,...,n-1,
\]
or, equivalently,
\[
A^{\left(n\right)}_{0}\geq A^{\left(n\right)}_{1}\geq...\geq A^{\left(n\right)}_{n-1}>0.
\]
Thus A1 holds.

Next, let $x\in\left(0,T\right]$. Since $\nu_{n}\in\left[\nu_{*},\nu^{*}\right]$,
we have 
\[
\omega_{1-\nu_{n}}\left(x\right)=\frac{\Gamma\left(1-\nu_{*}\right)}{\Gamma\left(1-\nu_{n}\right)}x^{-\left(\nu_{n}-\nu_{*}\right)}\omega_{1-\nu_{*}}\left(x\right).
\]

Because $\Gamma$ is decreasing on $\left(0,1\right)$ and $1-\nu_{n}\geq1-\nu^{*}$, it follows that $\Gamma\left(1-\nu_{n}\right)\leq\Gamma\left(1-\nu^{*}\right)$, and hence
\[
\frac{\Gamma\left(1-\nu_{*}\right)}{\Gamma\left(1-\nu_{n}\right)}\geq\frac{\Gamma\left(1-\nu_{*}\right)}{\Gamma\left(1-\nu^{*}\right)}.
\]

Moreover, since $0\leq\nu_{n}-\nu_{*}\leq\nu^{*}-\nu_{*}$ and $0<x\leq T$,
\[
x^{-\left(\nu_{n}-\nu_{*}\right)}\geq\frac{1}{\max\left\{ 1,T^{\nu^{*}-\nu_{*}}\right\} }.
\]

Therefore
\[
\omega_{1-\nu_{n}}\left(x\right)\geq\frac{\Gamma\left(1-\nu_{*}\right)}{\Gamma\left(1-\nu^{*}\right)\max\left\{ 1,T^{\nu^{*}-\nu_{*}}\right\} }\omega_{1-\nu_{*}}\left(x\right)=\frac{1}{\pi_{A}}\omega_{1-\nu_{*}}\left(x\right).
\]

Applying this estimate with $x=t_{n}-s$, for a.e. $s\in\left(t_{k-1},t_{k}\right)$, and integrating over $\left(t_{k-1},t_{k}\right)$ gives
\[
A^{\left(n\right)}_{n-k}\geq\frac{1}{\pi_{A}\tau}\int^{t_{k}}_{t_{k-1}}\omega_{1-\nu_{*}}\left(t_{n}-s\right)ds,\qquad1\leq k\leq n\leq N.
\]
Thus A2 holds with the fixed exponent $\nu_{*}$.

Finally, since the mesh is uniform, ${\displaystyle \rho_{k}=\frac{\tau_{k}}{\tau_{k+1}}=1}$,
so A3 holds with $\rho=1$.
\end{proof}

In the stability and convergence theorems below, we assume $\alpha\left(t\right)=\beta\left(t\right)$.
This restriction is imposed only to simplify the presentation of the stability and convergence results.

We first establish a discrete stability estimate.

\begin{theorem}[Stability]\label{thm:stability}
Let $(\mathbf{u}^{n}_{h},p^{n}_{h},\mathbf{B}^{n}_{h})\in\mathcal{V}_{h}\times\mathcal{Q}_{h}\times\mathcal{C}_{h}$
be the solution of Problem \ref{prob:fully}. Suppose
$\alpha\left(t\right)=\beta\left(t\right)$ on $\left[0,T\right]$,
and Assumptions \ref{assu:rhs} and \ref{assu:orders_bounds}
hold. Then the fully discrete solution satisfies
\[
\lVert\mathbf{u}^{n}_{h}\rVert^{2}+\lVert\mathbf{B}^{n}_{h}\rVert^{2}
\leq \lVert\mathbf{u}^{0}_{h}\rVert^{2}+\lVert\mathbf{B}^{0}_{h}\rVert^{2}
+C\max_{1\leq j\leq n}\left(\lVert\mathbf{f}^{j}\rVert^{2}_{H^{-1}}+\lVert\mathbf{g}^{j}\rVert^{2}_{H^{-1}}\right)
\]
for all $1\leq n\leq N$, where $C>0$ is independent of $h$ and
$\tau$.
\end{theorem}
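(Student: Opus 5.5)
Since $\alpha=\beta$, both fractional terms carry the same kernel $b^{(\alpha)}_{n,k}$. This lets me combine them into a single discrete Caputo operator acting on the scalar sequence $E^n:=\lVert\mathbf{u}^n_h\rVert^2+\lVert\mathbf{B}^n_h\rVert^2$, and then close the argument with a maximum-principle type induction over $n$ that exploits the monotonicity of the L1 weights.

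The first step is the same as in Lemma \ref{lem:corrected_energy}. I take $(\mathbf{v}_h,q_h,\mathbf{C}_h)=(\mathbf{u}^n_h,p^n_h,\mathbf{B}^n_h)$ in (\ref{eq:fully_u1})--(\ref{eq:fully_u2}) and sum. The skew-symmetry (\ref{eq:trilinear_prop_c}) cancels the convective and coupling terms, so I arrive at (\ref{eq:stab_step2}). Applying Lemma \ref{lem:frac1} to each fractional term, with the common order $\alpha(t_n)$, gives
\[
\tfrac12\Delta^{\alpha(t_n)}_\tau E^n+\tfrac1{\mathrm{Re}}\lVert\nabla\mathbf{u}^n_h\rVert^2+\tfrac1{\mathrm{Rm}}\lVert\nabla\mathbf{B}^n_h\rVert^2\le\langle\mathbf{f}^n,\mathbf{u}^n_h\rangle+\langle\mathbf{g}^n,\mathbf{B}^n_h\rangle .
\]
I bound the right-hand side by Young's inequality, absorbing the gradient terms into the left, and drop the nonnegative penalty terms. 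This leaves
\[
\Delta^{\alpha(t_n)}_\tau E^n\le C_0 F_n,\qquad F_n:=\lVert\mathbf{f}^n\rVert^2_{H^{-1}}+\lVert\mathbf{g}^n\rVert^2_{H^{-1}} .
\]

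Next I write this inequality in the explicit form from (\ref{eq:discrete_caputo}):
\[
b^{(\alpha)}_{n,n}E^n\le\sum_{k=1}^{n-1}\bigl(b^{(\alpha)}_{n,k+1}-b^{(\alpha)}_{n,k}\bigr)E^k+b^{(\alpha)}_{n,1}E^0+\tau C_0F_n .
\]
By (\ref{eq:prop_b1}) all the weights are nonnegative, and they sum to $b^{(\alpha)}_{n,n}$. I then prove by induction the claim $E^n\le E^0+C\max_{1\le j\le n}F_j$. Inserting the induction hypothesis on the right-hand side gives
\[
b^{(\alpha)}_{n,n}E^n\le b^{(\alpha)}_{n,n}\Bigl(E^0+C\max_{j\le n-1}F_j\Bigr)+\tau C_0F_n ,
\]
so it is enough to check that $\tau C_0 F_n/b^{(\alpha)}_{n,n}\le C F_n$. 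Here the variable order causes no difficulty, because each step uses only the kernel at level $n$.

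The step I expect to be the main obstacle is obtaining a lower bound for $b^{(\alpha)}_{n,n}/\tau$ that is uniform in $\tau$. We have
\[
b^{(\alpha)}_{n,n}/\tau=\frac{\tau^{-\alpha_n}}{\Gamma(2-\alpha_n)},
\]
which is large as $\tau\to0$ but can become small when $\tau>1$. The bound I want is $\tau^{-\alpha_n}\ge\min\{1,T^{-\alpha^*}\}$, valid because $\tau\le T$, together with $\Gamma(2-\alpha_n)\le1$. This gives $\tau/b^{(\alpha)}_{n,n}\le\max\{1,T^{\alpha^*}\}$, a constant independent of $h$ and $\tau$, and so $C=C_0\max\{1,T^{\alpha^*}\}$. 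If a cleaner route is preferred, Lemma \ref{lem:check_frac_assump} with the Grönwall theorem of \cite{Liao_2019} (zero Lipschitz part) would yield an analogous bound with constant $\pi_A\,\omega_{1+\alpha_*}(T)$ multiplying the maximum. I would use the direct induction, since it preserves the coefficient $1$ in front of $E^0$ exactly as stated.
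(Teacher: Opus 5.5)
\textbf{There is a genuine gap: the induction step does not close.} After inserting the hypothesis you only obtain
\[
E^n\le E^0+C\max_{1\le j\le n-1}F_j+\frac{\tau C_0}{b^{(\alpha)}_{n,n}}F_n .
\]
The last term is added on top of the maximum rather than absorbed into it: if $F_n\le\max_{j\le n-1}F_j$, you would need $\tau C_0F_n/b^{(\alpha)}_{n,n}\le 0$. Checking $\tau C_0/b^{(\alpha)}_{n,n}\le C$ therefore only gives $E^0+2C\max_{j\le n}F_j$, so the constant degrades at every level.

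Iterating your step honestly yields
\[
\max_{k\le n}E^k\le E^0+C_0\sum_{j=1}^{n}\Gamma(2-\alpha_j)\tau^{\alpha_j}F_j .
\]
Already in the constant-order case with $F_j\equiv F$ this is $C_0\Gamma(2-\alpha_c)\,t_n\tau^{\alpha_c-1}F$, which blows up as $\tau\to0$. So the lower bound on $b^{(\alpha)}_{n,n}/\tau$ that you single out as the main obstacle is the wrong quantity. Because $b^{(\alpha)}_{n,n}$ is large, it only limits the per-step increment to order $\tau^{\alpha}F_n$, and these increments accumulate over $n\sim T/\tau$ steps. The source must instead be absorbed by the smallest weight $b^{(\alpha)}_{n,1}$, which your step discards when it replaces $E^0$ by $E^0+C\max_{j\le n-1}F_j$.

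\textbf{How to repair it.} Leave $b^{(\alpha)}_{n,1}E^0$ untouched. Set $K_n:=C\max_{1\le j\le n}F_j$ and insert $E^k\le E^0+K_n$ (valid for $1\le k\le n-1$ since $K_k\le K_n$) only into the terms with $k\ge1$, whose weights sum to $b^{(\alpha)}_{n,n}-b^{(\alpha)}_{n,1}$. This gives
\[
b^{(\alpha)}_{n,n}E^n\le b^{(\alpha)}_{n,n}\bigl(E^0+K_n\bigr)-b^{(\alpha)}_{n,1}K_n+\tau C_0F_n .
\]
It then suffices that $\tau C_0\le C\,b^{(\alpha)}_{n,1}$, and this covers $n=1$ as well. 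By the integral representation in the proof of Lemma \ref{lem:prop_b}, $b^{(\alpha)}_{n,1}\ge \tau t_n^{-\alpha_n}/\Gamma(1-\alpha_n)$. Hence $\tau/b^{(\alpha)}_{n,1}\le\Gamma(1-\alpha^*)\max\{1,T^{\alpha^*}\}$, and $C=C_0\Gamma(1-\alpha^*)\max\{1,T^{\alpha^*}\}$ closes the induction uniformly in $\tau$ and $h$.

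\textbf{Comparison with the paper.} With this correction your route is a legitimate, more elementary alternative. The paper instead multiplies $\Delta^{\alpha(t_j)}_\tau E_j+2c_0D_j\le 2F_j$ by the complementary kernels $\mathcal{P}^{(n)}_{n-j}$ and telescopes to $E_n\le E_0+2\sum_j\mathcal{P}^{(n)}_{n-j}F_j$. It then bounds the sum by $\pi_A\Gamma(1-\alpha_*)t_n^{\alpha_*}\max_jF_j$ using Lemma \ref{lem:check_frac_assump}. Your version needs only (\ref{eq:prop_b1}) at level $n$. The paper's version keeps the weighted dissipation term on the left and reuses the kernel machinery needed later for convergence. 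Note also that the paper's telescoping already keeps the coefficient $1$ in front of $E_0$, since it never invokes the Gr\"onwall theorem with its factor $2$. So preserving that coefficient is not a reason to prefer the induction.
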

\begin{proof}
Denote $E_{n}=\lVert\mathbf{u}^{n}_{h}\rVert^{2}+\lVert\mathbf{B}^{n}_{h}\rVert^{2}$,
$D_{n}=\lVert\nabla\mathbf{u}^{n}_{h}\rVert^{2}+\lVert\nabla\mathbf{B}^{n}_{h}\rVert^{2}$.
As in the proof of Lemma \ref{lem:corrected_energy}, we choose $\left(\mathbf{v}_{h},q_{h},\mathbf{C}_{h}\right)=\left(\mathbf{u}^{n}_{h},p^{n}_{h},\mathbf{B}^{n}_{h}\right)$
in (\ref{eq:fully_u1})–(\ref{eq:fully_u2}),
then sum the resulting identities, and use (\ref{eq:trilinear_prop_c})
to obtain
\[
\left(\Delta^{\alpha\left(t_{n}\right)}_{\tau}\mathbf{u}^{n}_{h},\mathbf{u}^{n}_{h}\right)+\left(\Delta^{\alpha\left(t_{n}\right)}_{\tau}\mathbf{B}^{n}_{h},\mathbf{B}^{n}_{h}\right)+\frac{1}{\mathrm{Re}}\lVert\nabla\mathbf{u}^{n}_{h}\rVert^{2}+\frac{1}{\mathrm{Rm}}\lVert\nabla\mathbf{B}^{n}_{h}\rVert^{2}
\]
\begin{equation}
+\zeta\lVert\nabla\cdot\mathbf{u}^{n}_{h}\rVert^{2}+\chi\lVert\nabla\cdot\mathbf{B}^{n}_{h}\rVert^{2}=\left<\mathbf{f}^{n},\mathbf{u}^{n}_{h}\right>+\left<\mathbf{g}^{n},\mathbf{B}^{n}_{h}\right>.
\label{eq:stab_step1}
\end{equation}

Applying Lemma \ref{lem:frac1}, we obtain
\[
\left(\Delta^{\alpha\left(t_{n}\right)}_{\tau}\mathbf{u}^{n}_{h},\mathbf{u}^{n}_{h}\right)+\left(\Delta^{\alpha\left(t_{n}\right)}_{\tau}\mathbf{B}^{n}_{h},\mathbf{B}^{n}_{h}\right)
\geq\frac{1}{2}\Delta^{\alpha\left(t_{n}\right)}_{\tau}E_{n}.
\]

The right-hand side is estimated exactly as in the proof of Lemma
\ref{lem:corrected_energy}. Omitting the nonnegative divergence terms on the left-hand side, then using the estimates derived above in (\ref{eq:stab_step1}), we obtain
\begin{equation}
\Delta^{\alpha\left(t_{n}\right)}_{\tau}E_{n}+2c_{0}D_{n}\leq2F_{n},\label{eq:stab_step5}
\end{equation}
where $F_{n}=C\left(\lVert\mathbf{f}^{n}\rVert^{2}_{H^{-1}}+\lVert\mathbf{g}^{n}\rVert^{2}_{H^{-1}}\right)$,
$c_{0}=\frac{1}{2}\min\left\{ \frac{1}{\mathrm{Re}},\frac{1}{\mathrm{Rm}}\right\}$.

Set $A_{n-k}^{(n)}=\frac{1}{\tau} b_{n,k}^{(\alpha)}$, $1\le k\le n$.
Let $\{\mathcal P_{n-j}^{(n)}\}_{j=1}^{n}$, defined in Appendix A, 
be the complementary kernels
associated with $A_{n-k}^{(n)}$. Then
\[
\sum_{j=k}^{n}\mathcal P_{n-j}^{(n)}A_{j-k}^{(j)}=1,
\qquad 1\le k\le n.
\]

Multiplying (\ref{eq:stab_step5}) at the $j$th time level by $\mathcal{P}^{\left(n\right)}_{n-j}$,
and summing the resulting inequalities over $j=1,...,n$, we obtain
\[
\sum^{n}_{j=1}\mathcal{P}^{\left(n\right)}_{n-j}\Delta^{\alpha\left(t_{j}\right)}_{\tau}E_{j}+2c_{0}\sum^{n}_{j=1}\mathcal{P}^{\left(n\right)}_{n-j}D_{j}\leq2\sum^{n}_{j=1}\mathcal{P}^{\left(n\right)}_{n-j}F_{j}.
\]

Using ${\displaystyle \Delta^{\alpha\left(t_{j}\right)}_{\tau}E_{j}=\sum^{j}_{k=1}A^{\left(j\right)}_{j-k}\left(E_{k}-E_{k-1}\right)}$, we get
\[
\sum^{n}_{j=1}\mathcal{P}^{\left(n\right)}_{n-j}\Delta^{\alpha\left(t_{j}\right)}_{\tau}E_{j} 
 =\sum^{n}_{k=1}\left(E_{k}-E_{k-1}\right)\sum^{n}_{j=k}\mathcal{P}^{\left(n\right)}_{n-j}A^{\left(j\right)}_{j-k}
 =\sum^{n}_{k=1}\left(E_{k}-E_{k-1}\right)
 =E_{n}-E_{0}.
\]

Therefore,
\begin{equation}
E_{n}+2c_{0}\sum^{n}_{j=1}\mathcal{P}^{\left(n\right)}_{n-j}D_{j}\leq E_{0}+2\sum^{n}_{j=1}\mathcal{P}^{\left(n\right)}_{n-j}F_{j}.
\label{eq:stab_step6}
\end{equation}

By Lemma \ref{lem:check_frac_assump}, the kernels generated by the variable-order L1 approximation satisfy
Assumptions A1--A3, recalled in Appendix A, with comparison exponent $\alpha_{*}$, $\rho=1$,
and
\[
\pi_{A}=\frac{\Gamma\left(1-\alpha^{*}\right)}{\Gamma\left(1-\alpha_{*}\right)}\max\left\{ 1,T^{\alpha^{*}-\alpha_{*}}\right\}.
\]

Using the complementary kernel bound in the remark of Appendix A with $\gamma=\alpha_{*}$ and $g^{j}=F_{j}$, we obtain
\[
\sum_{j=1}^{n}\mathcal {P}^{(n)}_{n-j}F_{j}
\le
\pi_{A}\Gamma(1-\alpha_{*})
\max_{1\le j\le n}\left(t_{j}^{\alpha_{*}}F_{j}\right)
\le
\pi_{A}\Gamma(1-\alpha_{*})t_{n}^{\alpha_{*}}
\max_{1\le j\le n}F_{j}.
\]

Then Eq. (\ref{eq:stab_step6}) yields
\[
E_{n}\leq E_{0}+2\pi_{A}\Gamma(1-\alpha_{*})t_{n}^{\alpha_{*}}\max_{1\leq j\leq n}F_{j}.
\]
Considering $t_{n}\leq T$, we arrive at the statement of the theorem.
\end{proof}

We employ the Stokes projector $\left(\Pi_{h}\mathbf{u}^{n},\Psi_{h}p^{n}\right)\in \mathcal{V}_{h}\times \mathcal{Q}_{h}$
defined as
\begin{align*}
 & \left(\nabla\left(\Pi_{h}\mathbf{u}^{n}-\mathbf{u}^{n}\right),\nabla\mathbf{v}_{h}\right)-\left(\Psi_{h}p^{n}-p^{n},\nabla\cdot\mathbf{v}_{h}\right)=0,\\
 & \left(\nabla\cdot\left(\Pi_{h}\mathbf{u}^{n}-\mathbf{u}^{n}\right),q_{h}\right)=0
\end{align*}
for all $\mathbf{v}_{h}\in \mathcal{V}_{h}$, $q_{h}\in \mathcal{Q}_{h}$, and an elliptic
projector $\Xi_{h}$ defined as
\[
\left(\nabla\left(\Xi_{h}\mathbf{B}^{n}-\mathbf{B}^{n}\right),\nabla\mathbf{C}_{h}\right)=0
\]
for all $\mathbf{C}_{h}\in \mathcal{C}_{h}$. Further, we introduce the decomposition
\begin{align}
 & \mathbf{u}^{n}-\mathbf{u}^{n}_{h}=\left(\mathbf{u}^{n}-\Pi_{h}\mathbf{u}^{n}\right)+\left(\Pi_{h}\mathbf{u}^{n}-\mathbf{u}^{n}_{h}\right)=\psi^{n}_{\mathbf{u}}+\xi^{n}_{\mathbf{u}},\nonumber \\
 & p^{n}-p^{n}_{h}=\left(p^{n}-\Psi_{h}p^{n}\right)+\left(\Psi_{h}p^{n}-p^{n}_{h}\right)=\psi^{n}_{p}+\xi^{n}_{p},\label{eq:projection_difference}\\
 & \mathbf{B}^{n}-\mathbf{B}^{n}_{h}=\left(\mathbf{B}^{n}-\Xi_{h}\mathbf{B}^{n}\right)+\left(\Xi_{h}\mathbf{B}^{n}-\mathbf{B}^{n}_{h}\right)=\psi^{n}_{\mathbf{B}}+\xi^{n}_{\mathbf{B}}.\nonumber 
\end{align}

By the approximation properties of $\Pi_{h}$ and $\Xi_{h}$, we have
\begin{equation}
\lVert\psi^{n}_{\mathbf{u}}\rVert+h\lVert\nabla\psi^{n}_{\mathbf{u}}\rVert\le Ch^{l+1}\lVert\mathbf{u}^{n}\rVert_{H^{l+1}},\qquad
\lVert\psi^{n}_{\mathbf{B}}\rVert+h\lVert\nabla\psi^{n}_{\mathbf{B}}\rVert\le Ch^{m+1}\lVert\mathbf{B}^{n}\rVert_{H^{m+1}}
\label{eq:approximation_errors}
\end{equation}
for $0\le n\le N$.

We next derive the convergence estimate for the fully discrete method.
\begin{theorem}[Convergence]\label{thm:convergence}
Let $\left(\mathbf{u}^{n}_{h},p^{n}_{h},\mathbf{B}^{n}_{h}\right)\in \mathcal{V}_{h}\times\mathcal{Q}_{h}\times\mathcal{C}_{h}$
be the solution of Problem \ref{prob:fully}. 
Assume that $\alpha\left(t\right)=\beta\left(t\right)$ on $\left[0,T\right]$, and that
$\lVert \Pi_{h}\mathbf{u}^{0}-\mathbf{u}_{h}^{0}  \rVert \leq Ch^{l}$
and $\lVert \Xi_{h}\mathbf{B}^{0}-\mathbf{B}_{h}^{0}  \rVert \leq Ch^{m}$,
where $l$ and $m$ are defined in (\ref{eq:approximation_errors}).
Under Assumptions \ref{assu:regularity} and \ref{assu:orders_bounds},
there exists $\tau_{0}>0$, independent of $h$, such that for $0<h\leq 1$ 
and $0<\tau\leq\tau_{0}$,
\[
\lVert \mathbf{u}\left(t_{n}\right)-\mathbf{u}^{n}_{h}\rVert +\lVert \mathbf{B}\left(t_{n}\right)-\mathbf{B}^{n}_{h}\rVert \le C\left(h^{\min\left\{ l,m\right\} }+\tau\right),\qquad1\le n\le N,
\]
where $C$ is independent of $h$ and $\tau$.
\end{theorem}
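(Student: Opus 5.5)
We will follow the standard projection-based error analysis and close it with the discrete fractional Gr\"{o}nwall theorem of \cite{Liao_2019}, whose hypotheses are verified in Lemma \ref{lem:check_frac_assump}. Using the decomposition (\ref{eq:projection_difference}), we subtract the fully discrete scheme (\ref{eq:fully_u1})--(\ref{eq:fully_u2}) from the semi-discrete identities (\ref{eq:semi_u1})--(\ref{eq:semi_u2}) tested with $(\mathbf{v}_h,q_h,\mathbf{C}_h)\in\mathcal{V}_h\times\mathcal{Q}_h\times\mathcal{C}_h$. The grad-div terms vanish on the exact solution since $\nabla\cdot\mathbf{u}=\nabla\cdot\mathbf{B}=0$. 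By the definition of the Stokes projector $(\Pi_h,\Psi_h)$ and of $\Xi_h$, the viscous, resistive and pressure contributions of $\psi_{\mathbf u}$, $\psi_p$, $\psi_{\mathbf B}$ cancel. This leaves error equations for $(\xi_{\mathbf u}^n,\xi_p^n,\xi_{\mathbf B}^n)$ with the following right-hand side terms:
\begin{itemize}
\item the consistency residuals $\mathbf r_1^n,\mathbf r_2^n$ from (\ref{eq:semi_r}), which are $O(\tau)$ in $L^2$ by (\ref{eq:frac_replacement}) under Assumption \ref{assu:regularity};
\item the fractional terms $\Delta_\tau^{\alpha(t_n)}\psi^n_{\mathbf u}$ and $\Delta_\tau^{\alpha(t_n)}\psi^n_{\mathbf B}$;
\item the grad-div contributions $\zeta(\nabla\cdot\psi_{\mathbf u}^n,\nabla\cdot\mathbf v_h)$ and $\chi(\nabla\cdot\psi^n_{\mathbf B},\nabla\cdot \mathbf C_h)$;
\item the differences of the four trilinear terms.
\end{itemize}

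Next we take $\mathbf v_h=\xi^n_{\mathbf u}$, $q_h=\xi_p^n$, $\mathbf C_h=\xi^n_{\mathbf B}$ and add the equations. The pressure term drops because $(\nabla\cdot\xi^n_{\mathbf u},q_h)=0$, which follows from (\ref{eq:fully_u2}) and the second Stokes-projector equation. The fractional term $\Delta^{\alpha(t_n)}_\tau\psi^n$ is bounded through $\lVert\Delta_\tau^{\alpha(t_n)}\psi^n\rVert\le \max_k\lVert\partial_t\psi\rVert\,\tau^{-1}\sum_k b^{(\alpha)}_{n,k}\le C t_n^{1-\alpha(t_n)}h^{l+1}$, using (\ref{eq:prop_b3}) and $\partial_t\mathbf u\in L^\infty(H^{l+1})$. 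The grad-div terms give $Ch^{l}\lVert\nabla\xi\rVert$, and these first-order contributions are why the final rate is $h^{\min\{l,m\}}$ rather than $h^{l+1}$.

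For the nonlinear terms we write, for example,
\[
\ell(\mathbf u^n;\mathbf u^n,\xi_{\mathbf u}^n)-\ell(\mathbf u_h^n;\mathbf u_h^n,\xi_{\mathbf u}^n)=\ell(\psi_{\mathbf u}^n+\xi_{\mathbf u}^n;\mathbf u^n,\xi^n_{\mathbf u})+\ell(\mathbf u_h^n;\psi^n_{\mathbf u},\xi^n_{\mathbf u}),
\]
where the term $\ell(\mathbf u_h^n;\xi^n_{\mathbf u},\xi^n_{\mathbf u})$ has vanished by (\ref{eq:trilinear_prop_c}). We then use the exact-solution bound $\mathbf u,\mathbf B\in W^{1,\infty}$ to get $|\ell(\xi;\mathbf u^n,\xi)|\le C\lVert\xi\rVert(\lVert\xi\rVert+\lVert\nabla\xi\rVert)$. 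The crucial structural point is the cross coupling. The two terms $-\ell(\mathbf B_h^n;\mathbf B_h^n,\xi_{\mathbf u})$ and $-\ell(\mathbf B^n_h;\mathbf u^n_h,\xi_{\mathbf B})$, when expanded, produce $-\ell(\mathbf B_h^n;\xi_{\mathbf B},\xi_{\mathbf u})-\ell(\mathbf B_h^n;\xi_{\mathbf u},\xi_{\mathbf B})$, which cancels by skew-symmetry. This is why a single energy functional works, and the assumption $\alpha=\beta$ lets us use one kernel for the sum. The terms still containing $\mathbf u_h$ or $\mathbf B_h$ are handled by writing $\mathbf u_h=\mathbf u-\psi-\xi$, with Ladyzhenskaya, the inverse inequality and the approximation bound (\ref{eq:approximation_errors}). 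Using Lemma \ref{lem:frac1} and Young's inequality to absorb the gradients into $\frac{1}{\mathrm{Re}}\lVert\nabla\xi_{\mathbf u}\rVert^2$ and $\frac{1}{\mathrm{Rm}}\lVert\nabla\xi_{\mathbf B}\rVert^2$, we aim at an inequality for $E_n=\lVert\xi_{\mathbf u}^n\rVert^2+\lVert\xi^n_{\mathbf B}\rVert^2$ of the form
\[
\Delta_\tau^{\alpha(t_n)}E_n\le \lambda_0E_n+\lambda_1 E_{n-1}+C\big(h^{2\min\{l,m\}}+\tau^2\big).
\]

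Finally we apply the discrete fractional Gr\"{o}nwall theorem of Appendix A, whose hypotheses A1--A3 hold by Lemma \ref{lem:check_frac_assump} with exponent $\alpha_*$, $\rho=1$ and the stated $\pi_A$. This gives $E_n\le C\,E_\alpha(\cdot)\big(E_0+\max_j t_j^{\alpha_*}(h^{2\min\{l,m\}}+\tau^2)\big)$ whenever $\tau\le\tau_0$, where $\tau_0$ is determined by $\lambda_0,\lambda_1,\pi_A$. Combining this with $E_0\le Ch^{2\min\{l,m\}}$ and the projection bounds (\ref{eq:approximation_errors}) via the triangle inequality yields the claim.

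The main obstacle is the nonlinear terms involving the discrete solution: bounding $\lVert\nabla\mathbf u_h^n\rVert$ and $\lVert\mathbf u_h^n\rVert_{L^\infty}$ uniformly is not provided by Theorem \ref{thm:stability}. The first thing we would try is an induction hypothesis on the time level, $\lVert\xi^{k}\rVert\le h$ for $k<n$, combined with the inverse inequality, so that $\lVert\mathbf u_h^k\rVert_{L^4}$ stays bounded for $h\le1$. The induction is then closed using the Gr\"{o}nwall estimate and smallness of $\tau$. A further point needing care is the $O(\tau)$ size of $\mathbf r^n$ near $t=0$; here we rely on the regularity $\partial_t\mathbf u\in L^\infty$ in Assumption \ref{assu:regularity} to keep it uniformly $O(\tau)$.
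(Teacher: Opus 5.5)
Your argument is essentially the paper's proof. It uses the same error equation tested with $(\xi^n_{\mathbf u},\xi^n_p,\xi^n_{\mathbf B})$ and the same skew-symmetric cancellation of $\ell(\mathbf B^n_h;\xi^n_{\mathbf B},\xi^n_{\mathbf u})+\ell(\mathbf B^n_h;\xi^n_{\mathbf u},\xi^n_{\mathbf B})$. It also uses the same splitting $\mathbf u^n_h=\mathbf u^n-\psi^n_{\mathbf u}-\xi^n_{\mathbf u}$, and Lemma \ref{lem:frac1} applied to the sum, which is where $\alpha=\beta$ enters. It closes with the Gr\"onwall theorem of \cite{Liao_2019} via Lemma \ref{lem:check_frac_assump}; the paper needs only $\lambda_0=2C_1$, so no $E_{n-1}$ term arises.

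What you call the main obstacle is not one, and you should drop the induction. Apply the splitting you already propose and discard $\ell(\mathbf u^n_h;\xi^n,\xi^n)=0$. Then every remaining trilinear term is at most quadratic in $\xi^n$ and carries a factor $\mathbf u^n$, $\mathbf B^n$ or $\psi^n$. No norm of $\mathbf u^n_h$ or $\mathbf B^n_h$ survives.

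The worst term is $\ell(\xi^n_{\mathbf u};\psi^n_{\mathbf u},\xi^n_{\mathbf u})$. H\"older's inequality, Ladyzhenskaya, and the inverse inequality on $\xi^n_{\mathbf u}$ bound it by $\epsilon\lVert\nabla\xi^n_{\mathbf u}\rVert^2+C\bigl(1+h^{-1}\lVert\nabla\psi^n_{\mathbf u}\rVert^2\bigr)\lVert\xi^n_{\mathbf u}\rVert^2$. Here $h^{-1}\lVert\nabla\psi^n_{\mathbf u}\rVert^2\le Ch^{2l-1}\le C$ for $h\le 1$, which is exactly how the paper gets $I_1\le C_1E_n$.

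The induction would not have worked as stated anyway. The scheme is fully implicit, so the nonlinear terms are evaluated at level $n$. A hypothesis on $\xi^k$ for $k<n$ gives no control of $\mathbf u^n_h$.

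One smaller correction concerns the residual. $\partial_t\mathbf u\in L^\infty$ alone gives only $\lVert\mathbf r^n_1\rVert=O\bigl(t_n^{1-\alpha(t_n)}\bigr)$, not $O(\tau)$. The $O(\tau)$ bound is the truncation estimate \eqref{eq:frac_replacement}, which requires control of $\partial_{tt}\mathbf u$. The paper invokes the same estimate, so this does not separate your route from its proof, but your stated justification is incorrect.
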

\begin{proof}
Denote $E_{n}=\lVert \xi^{n}_{\mathbf{u}}\rVert ^{2}+\lVert \xi^{n}_{\mathbf{B}}\rVert ^{2}$,
$D_{n}=\lVert \nabla\xi^{n}_{\mathbf{u}}\rVert ^{2}+\lVert \nabla\xi^{n}_{\mathbf{B}}\rVert ^{2}$.
Subtract the identities (\ref{eq:fully_u1})–(\ref{eq:fully_u2})
from the identities (\ref{eq:semi_u1})–(\ref{eq:semi_u2}), use the
decomposition (\ref{eq:projection_difference}), then choose $\left(\mathbf{v}_{h},q_{h},\mathbf{C}_{h}\right)=\left(\xi^{n}_{\mathbf{u}},\xi^{n}_{p},\xi^{n}_{\mathbf{B}}\right)$
to obtain
\begin{align}
 & \left(\Delta^{\alpha\left(t_{n}\right)}_{\tau}\xi^{n}_{\mathbf{u}},\xi^{n}_{\mathbf{u}}\right)+\left(\Delta^{\alpha\left(t_{n}\right)}_{\tau}\xi^{n}_{\mathbf{B}},\xi^{n}_{\mathbf{B}}\right)+\left(\Delta^{\alpha\left(t_{n}\right)}_{\tau}\psi^{n}_{\mathbf{u}},\xi^{n}_{\mathbf{u}}\right)\nonumber\displaybreak[0]\\
 & \qquad+\left(\Delta^{\alpha\left(t_{n}\right)}_{\tau}\psi^{n}_{\mathbf{B}},\xi^{n}_{\mathbf{B}}\right)
+\frac{1}{\mathrm{Re}}\lVert \nabla\xi^{n}_{\mathbf{u}}\rVert ^{2}+\frac{1}{\mathrm{Rm}}\lVert \nabla\xi^{n}_{\mathbf{B}}\rVert ^{2}\displaybreak[0]\nonumber \\
 & \qquad+\ell\left(\xi^{n}_{\mathbf{u}};\mathbf{u}^{n},\xi^{n}_{\mathbf{u}}\right)+\ell\left(\psi^{n}_{\mathbf{u}};\mathbf{u}^{n},\xi^{n}_{\mathbf{u}}\right)+\ell\left(\mathbf{u}^{n}_{h};\psi^{n}_{\mathbf{u}},\xi^{n}_{\mathbf{u}}\right)\displaybreak[0]\nonumber \\
 & \qquad+\ell\left(\xi^{n}_{\mathbf{u}};\mathbf{B}^{n},\xi^{n}_{\mathbf{B}}\right)+\ell\left(\psi^{n}_{\mathbf{u}};\mathbf{B}^{n},\xi^{n}_{\mathbf{B}}\right)+\ell\left(\mathbf{u}^{n}_{h};\psi^{n}_{\mathbf{B}},\xi^{n}_{\mathbf{B}}\right)\displaybreak[0]\nonumber \\
 & \qquad-\ell\left(\xi^{n}_{\mathbf{B}};\mathbf{B}^{n},\xi^{n}_{\mathbf{u}}\right)-\ell\left(\psi^{n}_{\mathbf{B}};\mathbf{B}^{n},\xi^{n}_{\mathbf{u}}\right)-\ell\left(\mathbf{B}^{n}_{h};\psi^{n}_{\mathbf{B}},\xi^{n}_{\mathbf{u}}\right)\nonumber \\
 & \qquad-\ell\left(\xi^{n}_{\mathbf{B}};\mathbf{u}^{n},\xi^{n}_{\mathbf{B}}\right)-\ell\left(\psi^{n}_{\mathbf{B}};\mathbf{u}^{n},\xi^{n}_{\mathbf{B}}\right)-\ell\left(\mathbf{B}^{n}_{h};\psi^{n}_{\mathbf{u}},\xi^{n}_{\mathbf{B}}\right)\nonumber \\
 & \qquad+\chi\lVert \nabla\cdot\xi^{n}_{\mathbf{B}}\rVert ^{2}+\zeta\lVert \nabla\cdot\xi^{n}_{\mathbf{u}}\rVert ^{2}+\chi\left(\nabla\cdot\psi^{n}_{\mathbf{B}},\nabla\cdot\xi^{n}_{\mathbf{B}}\right)\nonumber\displaybreak[0]\\
 & \qquad+\zeta\left(\nabla\cdot\psi^{n}_{\mathbf{u}},\nabla\cdot\xi^{n}_{\mathbf{u}}\right)
 +\left(\mathbf{r}^{n}_{1},\xi^{n}_{\mathbf{u}}\right)+\left(\mathbf{r}^{n}_{2},\xi^{n}_{\mathbf{B}}\right)=0.
 \label{eq:conv_step2}
\end{align}

Using Lemma \ref{lem:frac1}, we have:
\[
\left(\Delta^{\alpha\left(t_{n}\right)}_{\tau}\xi^{n}_{\mathbf{u}},\xi^{n}_{\mathbf{u}}\right)+\left(\Delta^{\alpha\left(t_{n}\right)}_{\tau}\xi^{n}_{\mathbf{B}},\xi^{n}_{\mathbf{B}}\right)\geq\frac{1}{2}\Delta^{\alpha\left(t_{n}\right)}_{\tau}E_{n}.
\]

Next, 
\[
\left|\left(\Delta^{\alpha\left(t_{n}\right)}_{\tau}\psi^{n}_{\mathbf{u}},\xi^{n}_{\mathbf{u}}\right)\right|
\leq\frac{\epsilon}{\mathrm{Re}}\lVert \nabla\xi^{n}_{\mathbf{u}}\rVert ^{2}+C_{\epsilon}\mathrm{Re}\lVert \Delta^{\alpha\left(t_{n}\right)}_{\tau}\psi^{n}_{\mathbf{u}}\rVert ^{2}.
\]

Further, using the relation 
$\psi^{k}_{\mathbf{u}}-\psi^{k-1}_{\mathbf{u}}=\left(\mathbf{u}^{k}-\mathbf{u}^{k-1}\right)-\Pi_{h}\left(\mathbf{u}^{k}-\mathbf{u}^{k-1}\right)$, and
the approximation property of $\Pi_{h}$, we obtain
\[
\lVert \psi^{k}_{\mathbf{u}}-\psi^{k-1}_{\mathbf{u}}\rVert \leq C\tau h^{l+1}\sup_{t\in\left[t_{k-1},t_{k}\right]}\lVert \partial_{t}\mathbf{u}\left(t\right)\rVert _{H^{l+1}}.
\]

Hence, by using (\ref{eq:discrete_caputo}), (\ref{eq:prop_b3}) and (\ref{eq:approximation_errors}), we have
\begin{align*}
\lVert \Delta^{\alpha\left(t_{n}\right)}_{\tau}\psi^{n}_{\mathbf{u}}\rVert
 & \leq\frac{1}{\tau}\sum^{n}_{k=1}b^{\left(\alpha\right)}_{n,k}\lVert \psi^{k}_{\mathbf{u}}-\psi^{k-1}_{\mathbf{u}}\rVert \\
 & \leq Ch^{l+1}\left(\sum^{n}_{k=1}b^{\left(\alpha\right)}_{n,k}\right)\sup_{t\in\left[0,t_{n}\right]}\lVert \partial_{t}\mathbf{u}\left(t\right)\rVert _{H^{l+1}}\leq Ch^{l+1}\sup_{t\in\left[0,T\right]}\lVert \partial_{t}\mathbf{u}\left(t\right)\rVert _{H^{l+1}}.
\end{align*}

Therefore, for any $\epsilon>0$,
\[
\left|\left(\Delta^{\alpha\left(t_{n}\right)}_{\tau}\psi^{n}_{\mathbf{u}},\xi^{n}_{\mathbf{u}}\right)\right|\leq\frac{\epsilon}{\mathrm{Re}}\lVert \nabla\xi^{n}_{\mathbf{u}}\rVert ^{2}+C_{\epsilon}h^{2l+2}\sup_{t\in\left[0,T\right]}\lVert \partial_{t}\mathbf{u}\left(t\right)\rVert ^{2}_{H^{l+1}}.
\]

Similarly,
\[
\left|\left(\Delta^{\alpha\left(t_{n}\right)}_{\tau}\psi^{n}_{\mathbf{B}},\xi^{n}_{\mathbf{B}}\right)\right|\leq\frac{\epsilon}{\mathrm{Rm}}\lVert \nabla\xi^{n}_{\mathbf{B}}\rVert ^{2}+C_{\epsilon}h^{2m+2}\sup_{t\in\left[0,T\right]}\lVert \partial_{t}\mathbf{B}\left(t\right)\rVert ^{2}_{H^{m+1}}.
\]

We now estimate the nonlinear terms. By H\"{o}lder's inequality,
the two-dimensional Ladyzhenskaya inequality, the Poincar\'{e} inequality,
and Young's inequality, we obtain for any $\epsilon>0$:
\begin{align*}
 & \left|\ell\left(\xi^{n}_{\mathbf{u}};\mathbf{u}^{n},\xi^{n}_{\mathbf{u}}\right)\right|\leq\frac{\epsilon}{\mathrm{Re}}\left\Vert \nabla\xi^{n}_{\mathbf{u}}\right\Vert ^{2}+ C_{\epsilon}\left\Vert \xi^{n}_{\mathbf{u}}\right\Vert ^{2}\left\Vert \mathbf{u}^{n}\right\Vert ^{2}_{W^{1,\infty}},\displaybreak[0]\\
 & \left|\ell\left(\psi^{n}_{\mathbf{u}};\mathbf{u}^{n},\xi^{n}_{\mathbf{u}}\right)\right|\leq\frac{\epsilon}{\mathrm{Re}}\left\Vert \nabla\xi^{n}_{\mathbf{u}}\right\Vert ^{2}+C_{\epsilon}\left\Vert \nabla\psi^{n}_{\mathbf{u}}\right\Vert ^{2}\left\Vert \mathbf{u}^{n}\right\Vert ^{2}_{H^{1}}.
\end{align*}
The other nonlinear terms of the same form are estimated analogously.

Using the error decomposition (\ref{eq:projection_difference}) and
the trilinearity of $\ell$, we decompose the term involving the discrete solution into contributions of the exact solution, the projection error, and the discrete error:
\[
\left|\ell\left(\mathbf{u}^{n}_{h};\psi^{n}_{\mathbf{u}},\xi^{n}_{\mathbf{u}}\right)\right|\leq\left|\ell\left(\mathbf{u}^{n};\psi^{n}_{\mathbf{u}},\xi^{n}_{\mathbf{u}}\right)\right|+\left|\ell\left(\psi^{n}_{\mathbf{u}};\psi^{n}_{\mathbf{u}},\xi^{n}_{\mathbf{u}}\right)\right|+\left|\ell\left(\xi^{n}_{\mathbf{u}};\psi^{n}_{\mathbf{u}},\xi^{n}_{\mathbf{u}}\right)\right|.
\]

Applying H\"{o}lder's inequality, the two-dimensional Ladyzhenskaya inequality, the
Poincar\'{e} inequality, the inverse inequality for finite element
functions, and Young's inequality, we obtain
\[
\left|\ell\left(\mathbf{u}^{n}_{h};\psi^{n}_{\mathbf{u}},\xi^{n}_{\mathbf{u}}\right)\right|\leq\frac{\epsilon}{\mathrm{Re}}\lVert \nabla\xi^{n}_{\mathbf{u}}\rVert ^{2}+C_{\epsilon}\lVert \mathbf{u}^{n}\rVert ^{2}_{L^{\infty}}\lVert \psi^{n}_{\mathbf{u}}\rVert ^{2}+C_{\epsilon}\lVert \nabla\psi^{n}_{\mathbf{u}}\rVert ^{4}+Ch^{-1}\lVert \nabla\psi^{n}_{\mathbf{u}}\rVert ^{2}\lVert \xi^{n}_{\mathbf{u}}\rVert ^{2}.
\]
The corresponding terms involving $\mathbf{B}^{n}_{h}$ are handled
analogously and are omitted for brevity.

The rest of the terms are estimated as follows:
\begin{alignat*}{1}
 & \chi\left|\left(\nabla\cdot\psi^{n}_{\mathbf{B}},\nabla\cdot\xi^{n}_{\mathbf{B}}\right)\right|+\zeta\left|\left(\nabla\cdot\psi^{n}_{\mathbf{u}},\nabla\cdot\xi^{n}_{\mathbf{u}}\right)\right|\leq\frac{\chi}{2}\lVert \nabla\cdot\xi^{n}_{\mathbf{B}}\rVert ^{2}+\frac{\zeta}{2}\lVert \nabla\cdot\xi^{n}_{\mathbf{u}}\rVert ^{2}+C\left(\lVert \nabla\psi^{n}_{\mathbf{B}}\rVert ^{2}+\lVert \nabla\psi^{n}_{\mathbf{u}}\rVert ^{2}\right),
\end{alignat*}
\[
\left|\left(\mathbf{r}^{n}_{1},\xi^{n}_{\mathbf{u}}\right)\right|+\left|\left(\mathbf{r}^{n}_{2},\xi^{n}_{\mathbf{B}}\right)\right|\leq\frac{\epsilon}{\mathrm{Re}}\lVert \nabla\xi^{n}_{\mathbf{u}}\rVert ^{2}+\frac{\epsilon}{\mathrm{Rm}}\lVert \nabla\xi^{n}_{\mathbf{B}}\rVert ^{2}
+C\left(\lVert \mathbf{r}^{n}_{1}\rVert ^{2}+\lVert \mathbf{r}^{n}_{2}\rVert ^{2}\right).
\]

Using the estimates derived above in (\ref{eq:conv_step2}), and choosing
$\epsilon>0$ sufficiently small, we collect all terms involving $\lVert \nabla\xi^{n}_{\mathbf{u}}\rVert ^{2}$
and $\lVert \nabla\xi^{n}_{\mathbf{B}}\rVert ^{2}$ on the
left-hand side to obtain
\[
\frac{1}{2}\Delta^{\alpha\left(t_{n}\right)}_{\tau}E_{n}+c_{0}D_{n}+\frac{\chi}{2}\lVert \nabla\cdot\xi^{n}_{\mathbf{B}}\rVert ^{2}+\frac{\zeta}{2}\lVert \nabla\cdot\xi^{n}_{\mathbf{u}}\rVert ^{2}\leq I_{1}+I_{2},
\]
where $c_{0}=\frac{1}{2}\min\left\{ \frac{1}{\mathrm{Re}},\frac{1}{\mathrm{Rm}}\right\}$, and
\begin{alignat*}{1}
 & I_{1}=C\left(1+h^{-1}\lVert \nabla\psi^{n}_{\mathbf{u}}\rVert ^{2}+h^{-1}\lVert \nabla\psi^{n}_{\mathbf{B}}\rVert ^{2}\right)\left(  \lVert \xi^{n}_{\mathbf{u}}\rVert ^{2} + \lVert \xi^{n}_{\mathbf{B}}\rVert ^{2} \right) ,\\
 & I_{2}=C\left(\lVert \nabla\psi^{n}_{\mathbf{u}}\rVert ^{2}+\lVert \nabla\psi^{n}_{\mathbf{B}}\rVert ^{2}
 +\lVert \nabla\psi^{n}_{\mathbf{u}}\rVert ^{4}
 +\lVert \nabla\psi^{n}_{\mathbf{B}}\rVert ^{4}
 +\lVert \mathbf{r}^{n}_{1}\rVert ^{2}+\lVert \mathbf{r}^{n}_{2}\rVert ^{2}\right).
\end{alignat*}

Next, invoking the projection estimates (\ref{eq:approximation_errors}) as well as (\ref{eq:semi_r}) and (\ref{eq:frac_replacement}),
we infer that
\[
I_{1}+I_{2}\leq C_{1}E_{n}+C\left(h^{2l}+h^{2m}+\tau^{2}\right)
\]
for $0<h\leq 1$, where $C_{1}$ is independent of $h$, $\tau$ and $n$. Therefore,
\begin{equation}
\frac{1}{2}\Delta^{\alpha\left(t_{n}\right)}_{\tau}E_{n}+c_{0}D_{n}\leq C_{1}E_{n}+R,\label{eq:conv_step5}
\end{equation}
where $R=C\left(h^{2l}+h^{2m}+\tau^{2}\right)$. 

Using the definition of the coefficients $b^{\left(\alpha\right)}_{n,k}$,
one checks directly that
\[
\Delta^{\alpha\left(t_{n}\right)}_{\tau}E_{n}=\sum^{n}_{k=1}A^{\left(n\right)}_{n-k}\nabla_{\tau}E_{k},\qquad A^{\left(n\right)}_{n-k}=\frac{1}{\tau}b^{\left(\alpha\right)}_{n,k},\qquad\nabla_{\tau}E_{k}=E_{k}-E_{k-1}.
\]

Hence it follows from (\ref{eq:conv_step5}) that
\begin{equation}
\sum^{n}_{k=1}A^{\left(n\right)}_{n-k}\nabla_{\tau}E_{k}\leq2C_{1}E_{n}+2R.\label{eq:conv_step6}
\end{equation}

By setting $v^{n}=E_{n}$, $\theta=0$, $g^{n}=2R$, and choosing
$\lambda_{0}=2C_{1}$, $\lambda_{s}=0$, $1\leq s\leq N-1$, Eq. (\ref{eq:conv_step6})
can be written as
\[
\sum^{n}_{k=1}A^{\left(n\right)}_{n-k}\nabla_{\tau}v^{k}\leq\sum^{n}_{k=1}\lambda_{n-k}v^{k}+g^{n},\qquad1\leq n\leq N.
\]

Thus (\ref{eq:conv_step6}) has the form required by Theorem 3.2
of \cite{Liao_2019} (see Appendix A) with
\[
\Lambda\geq\sum^{N-1}_{s=0}\lambda_{s}=2C_{1}.
\]

Taking $\Lambda=2C_{1}$, and using Lemma \ref{lem:check_frac_assump},
Assumptions A1--A3 in \cite{Liao_2019} hold with exponent $\alpha_{*}$,
constant $\pi_{A}$, and $\rho=1$. Therefore the step-size condition
in Theorem 3.2 of \cite{Liao_2019} becomes
\[
\tau\leq\left(2\pi_{A}\Gamma\left(2-\alpha_{*}\right)\Lambda\right)^{-1/\alpha_{*}}=\left(4\pi_{A}\Gamma\left(2-\alpha_{*}\right)C_{1}\right)^{-1/\alpha_{*}},
\]
and Theorem 3.2 gives
\[
E_{n}\leq2\mathbb{E}_{\alpha_{*}}\left(4\pi_{A}C_{1}t^{\alpha_{*}}_{n}\right)\left(E_{0}+\max_{1\leq k\leq n}\sum^{k}_{j=1}2\mathcal{P}^{\left(k\right)}_{k-j}R\right),
\]
where $\mathbb{E}_{\alpha_{*}}$ is the Mittag--Leffler function and,
for each $1\leq z\leq N$, 
$\{\mathcal{P}^{\left(z\right)}_{z-j}\}_{j=1}^{z}$ are the complementary kernels
associated with the discrete kernels
$\{A^{\left(z\right)}_{z-k}\}_{k=1}^{z}$.
By Remark 1 following Theorem 3.1 in \cite{Liao_2019}, we have for $1\leq k\leq n$,
\[
\sum^{k}_{j=1}2\mathcal{P}^{\left(k\right)}_{k-j}R\leq2\pi_{A}\Gamma\left(1-\alpha_{*}\right)T^{\alpha_{*}}R.
\]

Consequently,
\[
E_{n}\leq2\mathbb{E}_{\alpha_{*}}\left(4\pi_{A}C_{1}t^{\alpha_{*}}_{n}\right)\left(E_{0}+2\pi_{A}\Gamma\left(1-\alpha_{*}\right)T^{\alpha_{*}}R\right).
\]

Therefore, using $t_{n}\leq T$, we obtain
\[
E_{n}\leq C\left(E_{0}+R\right),\qquad1\leq n\leq N,
\]
and considering that $E_{0}\leq C\left(h^{2l}+h^{2m}\right)$, we arrive at the assertion of the theorem.
\end{proof}

\subsection{Implementation of the Fully Discrete Scheme}

For the numerical implementation, the fully implicit scheme (\ref{eq:fully_u1})--(\ref{eq:fully_u2}) is solved at each time step by a Picard linearization initialized with the solution from the previous time step. This leads, at every Picard iteration, to a monolithic linear system \cite{Planas2011} for the coupled velocity--pressure--magnetic-field unknowns, which is solved by restarted FGMRES with a block preconditioner.

In addition to the stabilization terms included in the discrete formulation, the numerical implementation uses a divergence-cleaning step for the magnetic field after each time step.
More precisely, after solving Problem \ref{prob:fully} at time level $t_{n}$, we obtain $\left(\mathbf{u}_{h}^{n},{p}_{h}^{n},\widetilde{\mathbf{B}}_{h}^{n}\right)$, where $\widetilde{\mathbf{B}}_{h}^{n}$ denotes the magnetic field before the cleaning step. We then solve the auxiliary elliptic problem to find $\phi_{h}^{n}\in \mathcal{Y}_{h}$ such that
\begin{equation}
\left(\nabla \phi_{h}^{n},\nabla \psi_{h}\right)=-\left(\nabla\cdot \widetilde{\mathbf{B}}_{h}^{n},\psi_{h}\right),
\label{eq:elliptic}
\end{equation}
for all $\psi_{h}\in \mathcal{Y}_{h}$, where $\mathcal{Y}_{h}=\left\{\psi\in H^{1}\left(\Omega\right):\int_{\Omega} \psi\,d\mathbf{x}=0\right\}$, and then set
\begin{equation}
\mathbf{B}_{h}^{n}=\widetilde{\mathbf{B}}_{h}^{n}-\nabla \phi_{h}^{n}.
\label{eq:recalculated_B}
\end{equation}

In the periodic test cases, ({\ref{eq:elliptic}) is solved with periodic boundary conditions, and the zero-mean condition ensures uniqueness of $\phi_{h}^{n}$. In the cases with Dirichlet boundary conditions for the magnetic field, the correction (\ref{eq:recalculated_B}) does not in general preserve the prescribed boundary values. Therefore, after the cleaning step, the Dirichlet values are imposed again before proceeding to the next time level.

\section{Results}

\label{sec:Results}

This section presents a set of numerical tests to assess 
the correctness of the numerical scheme and
the effect of variable-order
fractional time derivatives on MHD flow. Although the formulation allows different variable orders
in the momentum and induction equations, we set
$\alpha\left(t\right)=\beta\left(t\right)$ in most of the numerical experiments 
to reduce the parameter space.
The case $\alpha\left(t\right)\ne\beta\left(t\right)$ is studied in Section \ref{subsec:unsymmetric}.

\subsection{Convergence of the Numerical Scheme}\label{subsec:convergence}

The first numerical experiment is designed to verify the temporal
convergence of the proposed scheme. 
We consider the system
(\ref{eq:nondim_u1})--(\ref{eq:nondim_B2}) on $\Omega=\left(0,1\right)^{2}$
over the time interval $\left(0,T\right]$ with $T=1$. We take the manufactured
solution
\begin{alignat}{1}
 & \mathbf{u}\left(\mathbf{x},t\right)=\left(\begin{array}{c}
t^{4}\left(1-x_{1}\right)^{2}x_{1}^{2}\left(4x_{2}^{3}-6x_{2}^{2}+2x_{2}\right)\\
-t^{4}\left(4x_{1}^{3}-6x_{1}^{2}+2x_{1}\right)\left(1-x_{2}\right)^{2}x_{2}^{2}
\end{array}\right),\qquad p\left(\mathbf{x},t\right)=0,
\qquad \mathbf{x}=\left(x_{1},x_{2}\right),
\label{eq:ex1_equ}\displaybreak[0]\\
 & \mathbf{B}\left(\mathbf{x},t\right)=\left(\begin{array}{c}
t^{3}\left(1-x_{1}\right)^{2}x_{1}^{2}\left(4x_{2}^{3}-6x_{2}^{2}+2x_{2}\right)\\
-t^{3}\left(4x_{1}^{3}-6x_{1}^{2}+2x_{1}\right)\left(1-x_{2}\right)^{2}x_{2}^{2}
\end{array}\right)
\label{eq:ex1_eqB}
\end{alignat} 
and three representative variable-order profiles.
In all three cases, the fractional orders in the velocity and induction equations
are taken to be the same.

\textit{Case 1.} A linearly increasing variable order
\[
{\displaystyle \alpha\left(t\right)=\beta\left(t\right)=\alpha_{0}+\left(\alpha_{1}-\alpha_{0}\right)\frac{t}{T}},\qquad\left(\alpha_{0},\alpha_{1}\right)=\left(0.6,0.95\right),\quad t\in\left[0,T\right]
\]
with homogeneous Dirichlet boundary conditions for both the velocity
and the magnetic field.

\textit{Case 2.} A periodically varying order
\[
\alpha\left(t\right)=\beta\left(t\right)=\overline{\alpha}+A_{0}\sin\left(\frac{2\pi t}{P_{0}}\right),\qquad
\overline{\alpha}=0.75,\quad
A_{0}=0.2,\quad
P_{0}=\frac{T}{2}
\]
with periodic boundary conditions for both the velocity
and the magnetic field.

\textit{Case 3.} A smoothly varying order
\[
\alpha\left(t\right)=\beta\left(t\right)=\alpha_{0}+\frac{1}{2}\left(\alpha_{1}-\alpha_{0}\right)
\left(1+\mathrm{tanh}\left(\frac{t-0.4}{0.05}\right)\right),
\qquad\left(\alpha_{0},\alpha_{1}\right)=\left(0.6,0.95\right),
\]
which models a rapid transition of the fractional order around $t\approx0.4$, with periodic boundary conditions for both the velocity
and the magnetic field.

We study temporal convergence by refining the uniform time step $\tau$
while keeping the spatial mesh fixed at $h\approx0.003535$. The parameters
in the numerical scheme are chosen as follows: $\mathrm{Re}=\mathrm{Rm}=1$,
$\zeta=0.5$, $\chi=0.125$.
The error is measured in the discrete norm
\[
\lVert \mathbf{e}\rVert _{L^{\infty}\left(0,T;\mathbf{L}^{2}\left(\Omega\right)\right)}=\max_{0\leq n\leq N}\lVert \mathbf{e}^{n}\rVert _{\mathbf{L}^{2}\left(\Omega\right)},
\]
and the observed temporal error is computed by the standard ratio
\[
r=\log_{2}\frac{E\left(\tau\right)}{E\left(\tau/2\right)}, 
\]
where $E\left(\tau\right)$
denotes the error of the solution, obtained with the time step $\tau$, 
in the norm defined above.

All runs in Section \ref{sec:Results} use the same spatial discretization: Taylor-Hood
elements $P_{2}/P_{1}$ for the 2D velocity-pressure pair $\left(\mathbf{u},p\right)$,
and continuous piecewise-quadratic $\left(P_{2}\right)$ elements
for the magnetic field $\mathbf{B}$.
In the computations, the Picard iteration is initialized with the solution from the previous time level 
and stopped when the difference between two successive iterates becomes smaller than $10^{-10}$. 
In the reported tests, convergence is achieved within 2--3 iterations. 
This is likely due to the combination of a smooth exact solution, 
relatively weak nonlinearity, and the use of the previous time level as the initial guess.

The results in Table \ref{tab:convtest1} show that the proposed variable-order
fractional MHD scheme exhibits a consistent first-order temporal convergence
for both $\mathbf{u}$ and $\mathbf{B}$. The measured orders stay
close to one (approximately $1.01-1.10$ for $\mathbf{u}$ and $1.03-1.05$
for $\mathbf{B}$) as $\tau$ decreases from $1/10$ to $1/320$.
This behavior agrees well with the theoretical prediction obtained in Theorem \ref{thm:convergence}.
Similar conclusions hold for Cases 2 and 3, presented in Table \ref{tab:convtest2} and Table \ref{tab:convtest3}, respectively: in both cases, the errors decrease consistently with time-step refinement, and the computed orders remain close to one.

\begin{table}[H]
\caption{Convergence analysis for Case 1.}
\label{tab:convtest1}
\begin{tabular*}{\textwidth}{@{\extracolsep{\fill}}ccccc@{}}
\hline
$\mathbf{\tau}$ & $\lVert \mathbf{u}-\mathbf{u}_{h}\rVert _{L^{\infty}\left(0,T;\mathbf{L}^{2}\left(\Omega\right)\right)}$ & \textbf{Order} & $\lVert \mathbf{B}-\mathbf{B}_{h}\rVert _{L^{\infty}\left(0,T;\mathbf{L}^{2}\left(\Omega\right)\right)}$ & \textbf{Order}\\
\hline
1/10 & $6.5261\times10^{-5}$ & -- & $3.5554\times10^{-5}$ & --\\
1/20 & $3.2445\times10^{-5}$ & 1.01 & $1.7411\times10^{-5}$ & 1.03\\
1/40 & $1.5825\times10^{-5}$ & 1.04 & $8.4499\times10^{-6}$ & 1.04\\
1/80 & $7.6090\times10^{-6}$ & 1.06 & $4.0834\times10^{-6}$ & 1.05\\
1/160 & $3.6152\times10^{-6}$ & 1.07 & $1.9704\times10^{-6}$ & 1.05\\
1/320 & $1.6855\times10^{-6}$ & 1.10 & $9.5280\times10^{-7}$ & 1.05\\
\hline
\end{tabular*}
\end{table}

\begin{table}[H]
\caption{Convergence analysis for Case 2.}
\label{tab:convtest2}
\begin{tabular*}{\textwidth}{@{\extracolsep{\fill}}ccccc@{}}
\hline
$\mathbf{\tau}$ & $\lVert \mathbf{u}-\mathbf{u}_{h}\rVert _{L^{\infty}\left(0,T;\mathbf{L}^{2}\left(\Omega\right)\right)}$ & \textbf{Order} & $\lVert \mathbf{B}-\mathbf{B}_{h}\rVert _{L^{\infty}\left(0,T;\mathbf{L}^{2}\left(\Omega\right)\right)}$ & \textbf{Order}\\
\hline
1/20 & $1.4530\times10^{-5}$ & -- & $1.1774\times10^{-5}$ & --\\
1/40 & $7.2694\times10^{-6}$ & 1.00 & $5.8258\times10^{-6}$ & 1.02\\
1/80 & $3.5998\times10^{-6}$ & 1.01 & $2.8495\times10^{-6}$ & 1.03\\
1/160 & $1.7515\times10^{-6}$ & 1.04 & $1.3835\times10^{-6}$ & 1.04\\
1/320 & $8.4703\times10^{-7}$ & 1.05 & $6.6896\times10^{-7}$ & 1.05\\
\hline
\end{tabular*}
\end{table}

\begin{table}[H]
\caption{Convergence analysis for Case 3.}
\label{tab:convtest3}
\begin{tabular*}{\textwidth}{@{\extracolsep{\fill}}ccccc@{}}
\hline
$\mathbf{\tau}$ & $\lVert \mathbf{u}-\mathbf{u}_{h}\rVert _{L^{\infty}\left(0,T;\mathbf{L}^{2}\left(\Omega\right)\right)}$ & \textbf{Order} & $\lVert \mathbf{B}-\mathbf{B}_{h}\rVert _{L^{\infty}\left(0,T;\mathbf{L}^{2}\left(\Omega\right)\right)}$ & \textbf{Order}\\
\hline
1/10 & $7.6527\times10^{-5}$ & -- & $4.0459\times10^{-5}$ & --\\
1/20 & $3.8289\times10^{-5}$ & 1.00 & $1.9917\times10^{-5}$ & 1.02\\
1/40 & $1.8819\times10^{-5}$ & 1.02 & $9.7121\times10^{-6}$ & 1.04\\
1/80 & $9.1632\times10^{-6}$ & 1.04 & $4.7137\times10^{-6}$ & 1.04\\
1/160 & $4.4372\times10^{-6}$ & 1.05 & $2.2827\times10^{-6}$ & 1.05\\
1/320 & $2.1399\times10^{-6}$ & 1.05 & $1.1050\times10^{-6}$ & 1.05\\
\hline
\end{tabular*}
\end{table}

We also examine the behavior of the method under spatial mesh refinement with the fixed time step $\tau=1/1000$. In all three cases, the numerical results are consistent with the expected second-order spatial convergence: the observed orders are higher on the coarser meshes but approach the asymptotic value 2 as the mesh is refined.

In addition to the convergence orders, we monitor the divergence norms in these tests. Figure~\ref{fig:conv_div_errors} shows the time evolution of $\lVert\nabla\cdot\mathbf{u}_{h}\rVert$ and $\lVert\nabla\cdot\mathbf{B}_{h}\rVert$ for Cases~1--3. In all cases, the velocity divergence remains low, and the cleaning step 
substantially reduces the magnetic divergence over the whole time interval. 
These results indicate that the divergence errors are well controlled in the convergence tests.
This behavior is consistent with the use of $H^{1}$-conforming finite element spaces, for which the constraints $\nabla\cdot\mathbf{u}=0$ and $\nabla\cdot\mathbf{B}=0$ are generally not satisfied exactly at the discrete level. Accordingly, one expects small but nonzero values of $\lVert\nabla\cdot\mathbf{u}_{h}\rVert$ and $\lVert\nabla\cdot\mathbf{B}_{h}\rVert$ in the numerical solution \cite{Wacker2016,BeirodaVeiga2025,John2017}.

\begin{figure}[H]
  \centering
  \includegraphics[width=\textwidth]{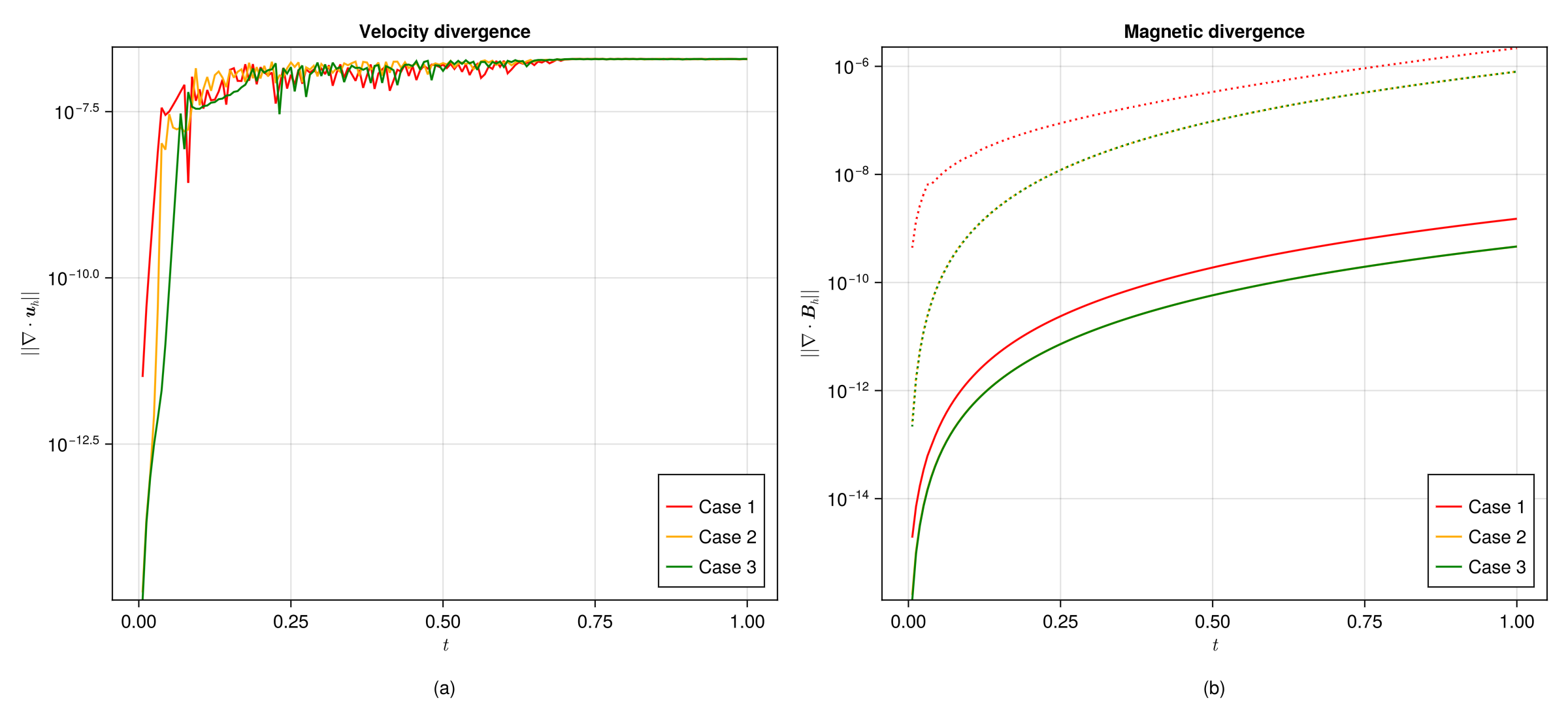}
  \caption{Time evolution of the divergence norms in the convergence tests: 
  \textbf{(a)}~$\lVert\nabla\cdot\mathbf{u}_{h}\rVert$; 
  \textbf{(b)}~$\lVert\nabla\cdot\mathbf{B}_{h}\rVert$, where the dotted and solid lines represent the values before and after the cleaning step, respectively. In panel \textbf{(b)}, the results for Cases~2 and 3 almost coincide.}
\label{fig:conv_div_errors}
\end{figure}

\subsection{Consistency of the Variable-order Fractional MHD Model with the Classical
MHD Model}

The second numerical experiment verifies that the variable-order
fractional MHD model under study is consistent with the classical (integer-order)
MHD model in the limit when the fractional orders $\alpha\left(t\right)$
and $\beta\left(t\right)$ approach one. To this end, we consider
a one-parameter family of variable orders
\[
\alpha\left(t\right)=\beta\left(t\right)=1-\varepsilon\left(1-\frac{t}{T}\right)-\delta,\qquad\delta=10^{-10},
\]
so that $\alpha\left(t\right),\beta\left(t\right)\in\left(0,1\right)$
for all $t\in\left[0,T\right]$ and $\alpha\left(t\right)\to1$ uniformly
in time as $\varepsilon\to0$. 
We note that the theoretical analysis is carried out under Assumption \ref{assu:orders_bounds},
which requires the fractional orders to be bounded away from 1. Hence, the limits $\alpha\left(t\right)$,
$\beta\left(t\right)\to 1$ are not covered by Theorem \ref{thm:convergence} and are examined here only
numerically.

We compute solutions $\left(\mathbf{u}_{\varepsilon},p_{\varepsilon},\mathbf{B}_{\varepsilon}\right)$
on $\Omega=\left(0,1\right)^{2}$ over the time interval $\left[0,T\right]$
with $T=0.5$ starting from the initial values
\[
\begin{array}{l}
{\displaystyle \mathbf{u}_{0}\left(\mathbf{x}\right)=\left(\begin{array}{c}
2\pi\sin^{2}\left(\pi x_{1}\right)\sin\left(\pi x_{2}\right)\cos\left(\pi x_{2}\right)\\
-2\pi\sin\left(\pi x_{1}\right)\cos\left(\pi x_{1}\right)\sin^{2}\left(\pi x_{2}\right)
\end{array}\right),}\displaybreak[0]\\
\mathbf{B}_{0}\left(\mathbf{x}\right)=\left(\begin{array}{c}
0.8\pi\sin^{2}\left(\pi x_{1}\right)\sin\left(2\pi x_{2}\right)\cos\left(2\pi x_{2}\right)\\
-0.4\pi\sin\left(\pi x_{1}\right)\cos\left(\pi x_{1}\right)\sin^{2}\left(2\pi x_{2}\right)
\end{array}\right)
\end{array}
\]
with homogeneous Dirichlet conditions for a sequence of decreasing
parameters $\varepsilon\in\left\{ 0.1\right.$, $0.05$, $0.03$, $0.02$,
$0.01$, $0.005$, $10^{-3}$, $10^{-4}$, $10^{-5}$, $\left.10^{-6}\right\} $, and compare
them against a reference integer-order solution $\left(\mathbf{u}_{1},p_{1},\mathbf{B}_{1}\right)$
obtained by setting $\alpha\left(t\right)=\beta\left(t\right)=1$. 
The Reynolds numbers are chosen as $\mathrm{Re}=\mathrm{Rm}=200$, 
and stabilization parameters and the Picard iteration tolerance 
are selected as in Section \ref{subsec:convergence}. 
In this test, the prescribed Picard tolerance was reached within 4--6 iterations.

In this and the following tests, the classical (integer-order) incompressible MHD system is solved with the same finite element spatial discretization and the corresponding fully implicit first-order time discretization. The nonlinear systems are solved by the same Picard iteration procedure as in the fractional case.

To quantify convergence toward the integer-order model, we introduce the time-dependent errors
\[
E_{u}^{(\varepsilon)}\left(t\right)=\lVert\mathbf{u}_{\varepsilon}\left(t\right)-\mathbf{u}_{1}\left(t\right)\rVert_{\mathbf{L}^{2}(\Omega)},
\qquad
E_{B}^{(\varepsilon)}\left(t\right)=\lVert\mathbf{B}_{\varepsilon}\left(t\right)-\mathbf{B}_{1}\left(t\right)\rVert_{\mathbf{L}^{2}(\Omega)},
\]
and also examine the energy differences
\[
|K_{\varepsilon}\left(t\right)-K_{1}\left(t\right)|,
\qquad
|M_{\varepsilon}\left(t\right)-M_{1}\left(t\right)|,
\]
where $K\left(t\right)$ and $M\left(t\right)$ are the kinetic and magnetic energies, respectively:
\begin{equation}
K\left(t\right)=\frac{1}{2}\int_{\Omega}\left|\mathbf{u}\left(\mathbf{x},t\right)\right|^{2}d\mathbf{x},\qquad M\left(t\right)=\frac{1}{2}\int_{\Omega}\left|\mathbf{B}\left(\mathbf{x},t\right)\right|^{2}d\mathbf{x},\label{eq:energy}
\end{equation}
and subscripts $\varepsilon$ and $1$ denote a corresponding diagnostic computed from the solution obtained with a given $\varepsilon$ and the classical MHD, respectively.

Figure \ref{fig:limit_test_compare_solutions} shows that both $E_{u}^{(\varepsilon)}\left(t\right)$
and $E_{B}^{(\varepsilon)}\left(t\right)$ decrease monotonically as $\varepsilon$ decreases.
For larger $\varepsilon$, the deviations from the integer-order solution grow rapidly
at early times and remain noticeable over the interval shown. As $\varepsilon$ becomes
smaller, the curves are shifted downward over the whole time interval, and for
$\varepsilon=10^{-6}$ the discrepancies are reduced to the level of about
$10^{-7}$--$10^{-5}$. This behavior is consistent with convergence of the fractional
solutions to the integer-order reference as $\varepsilon\to 0$. The logarithmic insets
further show the systematic reduction of the errors across several orders of magnitude.

\begin{figure}[H]
\centering
\includegraphics[width=0.98\textwidth]{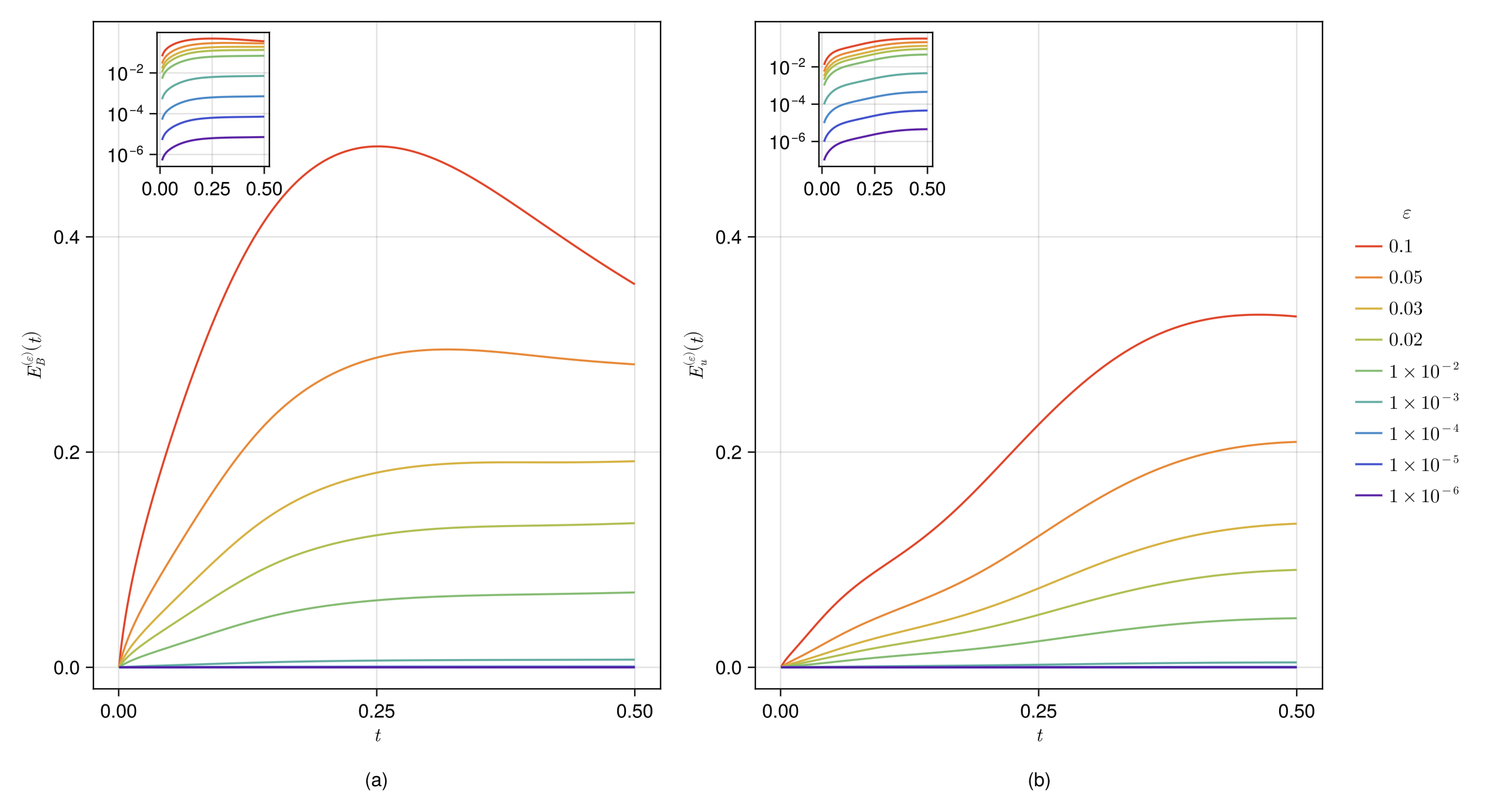}
\caption{Time evolution of the solution discrepancies between the classical model and
the fractional models defined by the variable-order law $\alpha\left(t\right)=\beta\left(t\right)=1-\varepsilon\left(1-\frac{t}{T}\right)-\delta$
with $\delta=10^{-10}$ for various $\varepsilon$:
\textbf{(a)}~$E_{B}^{\left(\varepsilon\right)}\left(t\right)=\lVert \mathbf{B}_{\varepsilon}\left(t\right)-\mathbf{B}_{1}\left(t\right)\rVert _{\mathbf{L}^{2}\left(\Omega\right)}$;
\textbf{(b)}~$E_{u}^{\left(\varepsilon\right)}\left(t\right)=\lVert \mathbf{u}_{\varepsilon}\left(t\right)-\mathbf{u}_{1}\left(t\right)\rVert _{\mathbf{L}^{2}\left(\Omega\right)}$.
The insets show the same curves on a logarithmic scale.}\label{fig:limit_test_compare_solutions}
\end{figure}

Figure \ref{fig:limit_test_compare_energy} shows the time evolution of
$\left|K_{\varepsilon}(t)-K_{1}\left(t\right)\right|$ and
$\left|M_{\varepsilon}(t)-M_{1}\left(t\right)\right|$. In both panels, the
discrepancies decrease as $\varepsilon$ becomes smaller. For the magnetic
energy, the curves remain ordered over the whole interval, with smaller
$\varepsilon$ giving uniformly smaller values of
$\left|M_{\varepsilon}\left(t\right)-M_{1}\left(t\right)\right|$. For the kinetic energy, the
same overall trend is observed, although the curves pass through values
close to zero near the middle of the interval. The logarithmic insets
make the reduction more visible and show that the energy differences
decrease by several orders of magnitude as $\varepsilon\to0$.

These results provide numerical evidence that, as $\alpha\left(t\right),\beta\left(t\right)\to1$,
the variable-order fractional MHD solutions converge to the classical
MHD solution in both state variables and energies, showing that the proposed 
formulation and discretization correctly recover the classical limit.

\begin{figure}[H]
\centering
\includegraphics[width=0.98\textwidth]{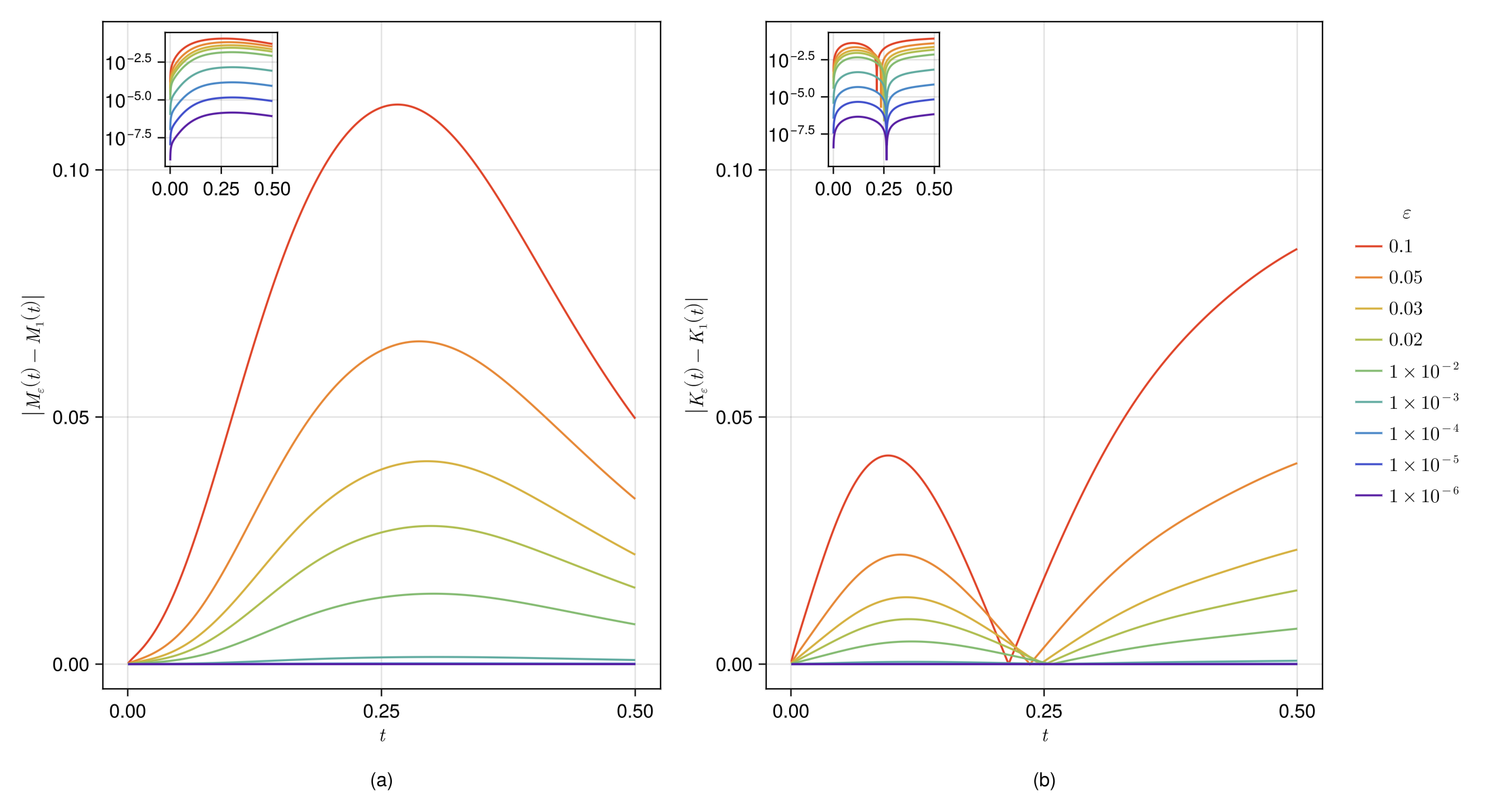}
\caption{Time evolution of energy discrepancies between the classical model and
the fractional models defined by the variable-order law $\alpha\left(t\right)=\beta\left(t\right)=1-\varepsilon\left(1-\frac{t}{T}\right)-\delta$, $\delta=10^{-10}$
for various $\varepsilon$:
\textbf{(a)}~$\left|M_{\varepsilon}\left(t\right)-M_{1}\left(t\right)\right|$;
\textbf{(b)}~$\left|K_{\varepsilon}\left(t\right)-K_{1}\left(t\right)\right|$. 
The insets show the same curves on a logarithmic
scale.}\label{fig:limit_test_compare_energy}
\end{figure}

\subsection{Impact of the Variable-order Fractional Derivatives on the MHD Flow}
\label{subsec:Orszag}

In the third experiment, we study how the order of the Caputo time-fractional
derivative affects the evolution of an MHD flow. We consider the fractional-order
periodic divergence-free vortex test in $\Omega=\left(0,1\right)^{2}$,
in which the initial values are defined as 
\[
\mathbf{u}_{0}\left(\mathbf{x}\right)=\left(\begin{array}{c}
\sin\left(2\pi x_{1}\right)\cos\left(2\pi x_{2}\right)\\
-\cos\left(2\pi x_{1}\right)\sin\left(2\pi x_{2}\right)
\end{array}\right),\qquad\mathbf{B}_{0}\left(\mathbf{x}\right)=\left(\begin{array}{c}
2\sin\left(8\pi x_{1}\right)\cos\left(8\pi x_{2}\right)\\
-2\cos\left(8\pi x_{1}\right)\sin\left(8\pi x_{2}\right)
\end{array}\right),
\]
whereas periodic boundary conditions are imposed on both velocity
and magnetic field. The remaining parameters are defined as follows:
$T=0.16$, $\mathrm{Re}=\mathrm{Rm}=300$, 
$h\approx0.009428$, $\tau=0.0002$.
In all numerical tests presented in Section \ref{subsec:Orszag}, the Picard iteration tolerance 
was set to $10^{-10}$, and this level was reached within 3--5 iterations.

To assess both constant-order and time-dependent memory effects, we
consider the following representative order functions $\alpha\left(t\right)=\beta\left(t\right)$,
all satisfying $\alpha\left(t\right)\in\left(0,1\right)$ for $t\in\left[0,T\right]$:

\textit{Case 1.} Constant order:
\[
\alpha\left(t\right)\equiv\alpha_{c},\qquad\alpha_{c}\in\left\{ 0.6,0.75,0.9\right\} .
\]

\textit{Case 2.} Linear ramp:
\[
\alpha\left(t\right)=\alpha_{0}+\left(\alpha_{1}-\alpha_{0}\right)\frac{t}{T},\qquad\alpha_{0}=0.9,\,\,\alpha_{1}=0.6.
\]

\textit{Case 3.} Step change:
\[
\alpha\left(t\right)=\left\{ \begin{array}{ll}
\alpha_{1}, & t<t_{s},\\
\alpha_{2}, & t\geq t_{s},
\end{array}\right.\qquad\alpha_{1}=0.9,\,\,\alpha_{2}=0.65,\,\,t_{s}=\frac{T}{2}.
\]

\textit{Case 4.} Periodic modulation (sinusoidal profile):
\[
\alpha\left(t\right)=\overline{\alpha}+A_{0}\sin\left(\frac{2\pi t}{P_{0}}\right),\qquad\overline{\alpha}=0.75,\,\,A_{0}=0.15,\,\,P_{0}=\frac{T}{2}.
\]

\textit{Case 5.} Smooth step:
\[
\alpha\left(t\right)=\alpha_{0}+\frac{1}{2}\left(\alpha_{1}-\alpha_{0}\right)\left(1+\mathrm{tanh}\left(\frac{t-t_{s}}{\varepsilon}\right)\right),\qquad\alpha_{0}=0.8,\,\,\alpha_{1}=0.5,\,\,t_{s}=\frac{2T}{5},\,\,\varepsilon=0.05.
\]

Here we include one case with a step-type order profile (Case 3) in order to examine the practical robustness of the numerical method in the presence of an abrupt change of memory intensity. This example lies outside the assumptions of the analysis and is therefore presented as an empirical robustness test rather than as a verification of the theoretical results.

All fractional and variable-order cases are compared against the classical
MHD model obtained for $\alpha\left(t\right)\equiv1$, $\beta\left(t\right)\equiv1$. For each case,
we monitor the kinetic energy $K\left(t\right)$ and magnetic energy
$M\left(t\right)$, defined in (\ref{eq:energy}), as well as the
enstrophy $Z\left(t\right)$ and current enstrophy $J\left(t\right)$,
given by
\[
Z\left(t\right)=\frac{1}{2}\int_{\Omega}\left|\nabla\times\mathbf{u}\left(\mathbf{x},t\right)\right|^{2}d\mathbf{x}\qquad\mathrm{and}\qquad J\left(t\right)=\frac{1}{2}\int_{\Omega}\left|\nabla\times\mathbf{B}\left(\mathbf{x},t\right)\right|^{2}d\mathbf{x}.
\]

\subsubsection{Choosing Stabilization Parameters}

Before analyzing the influence of the variable-order profiles on the computed MHD dynamics, we first specify the stabilization parameters, the grad-div parameter $\zeta$ and the magnetic divergence-penalty parameter $\chi$, used in the simulations. To choose suitable values, we perform a sensitivity study for the linear ramp profile (Case 2), which is taken here as a representative case.

Figure \ref{fig:orszag_choosing_stabilization} shows the time evolution of the divergence norms in this study. In panel (a), the parameter $\chi$ is fixed at $\chi=1$, and we examine the effect of $\zeta$ on $\left\Vert \nabla\cdot\mathbf{u}_{h}\right\Vert$. As $\zeta$ increases, the values of $\left\Vert \nabla\cdot\mathbf{u}_{h}\right\Vert$ decrease over the whole time interval. This decrease is clearly visible up to $\zeta=2000$, whereas larger values only slightly modify the curves. We therefore fix $\zeta=2000$ and then examine the role of $\chi$. Panel (b) shows that increasing $\chi$ leads to smaller values of $\left\Vert \nabla\cdot\mathbf{B}_{h}\right\Vert$ throughout the simulation. The reduction is pronounced up to $\chi=500$ and continues for larger values, although less strongly. In view of this behavior, we choose $\zeta=2000$ and $\chi=500$ for the computations in Section \ref{subsec:Orszag}, since these values already provide a substantial reduction of both divergence norms.

\begin{figure}[!h]
\centering
\includegraphics[width=0.99\textwidth]{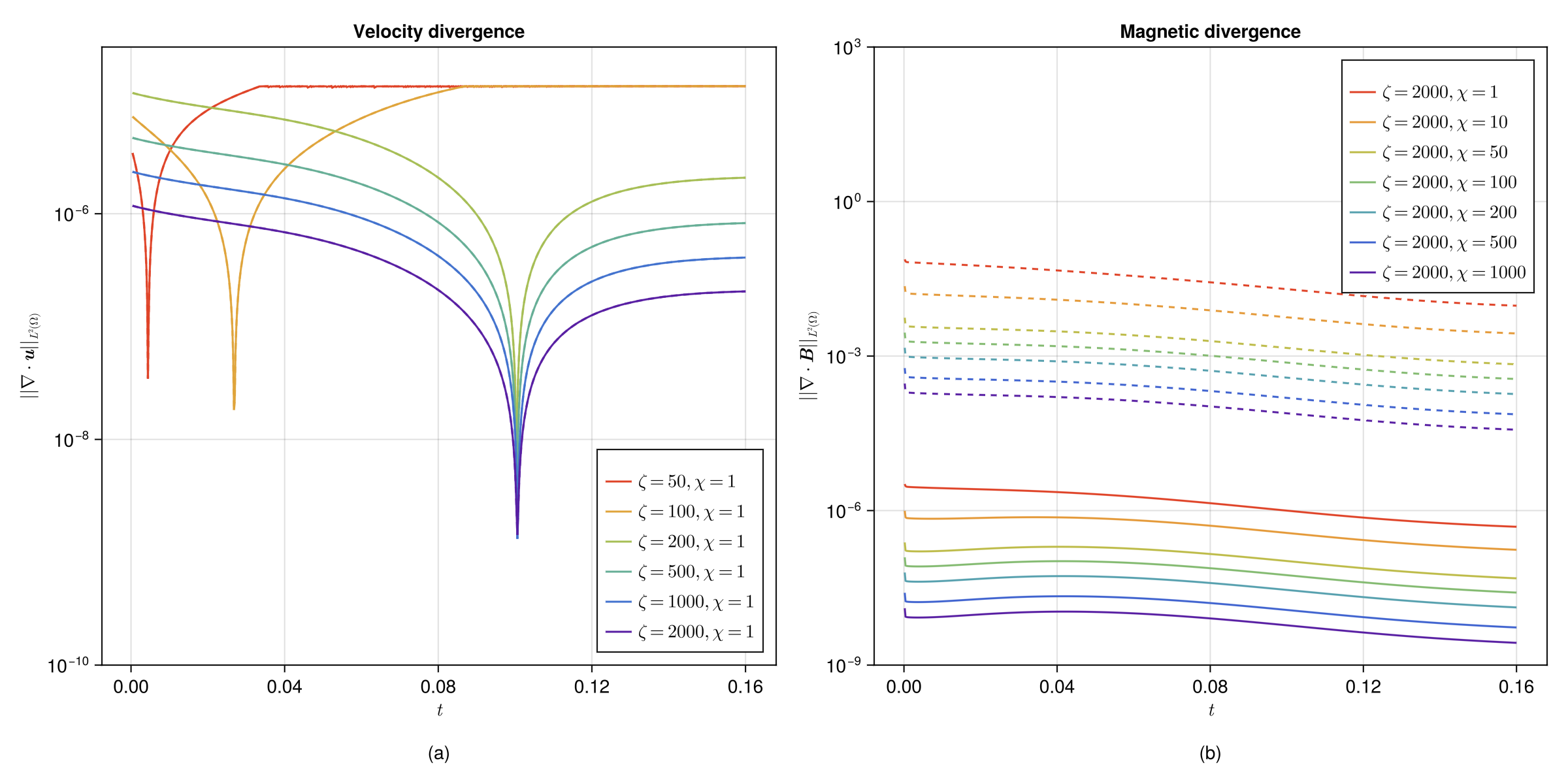}
\caption{Time evolution of the divergence norms in the linear ramp case (Case 2) used for selecting the stabilization parameters.
\textbf{(a)}~$\left\Vert \nabla\cdot\mathbf{u}_{h}\right\Vert$ for several values of $\zeta$ with $\chi=1$; 
\textbf{(b)}~$\left\Vert \nabla\cdot\mathbf{B}_{h}\right\Vert$ for several values of $\chi$ with $\zeta=2000$. Dashed lines correspond to the values before cleaning and solid lines correspond to the values after cleaning.}
\label{fig:orszag_choosing_stabilization}
\end{figure}

To check that this choice does not materially affect the computed dynamics, we compare the kinetic energy $K$, magnetic energy $M$, enstrophy $Z$, and current enstrophy $J$ for the tested parameter values. Taking the run with $(\zeta,\chi)=(2000,500)$ as the reference solution, 
let $K_{\zeta,\chi}$ denote the kinetic energy obtained with stabilization parameters $(\zeta,\chi)$. 
We then define
\[
\delta_{K}\left(\zeta,\chi\right)=\frac{\int^{T}_{0}\left|K_{\zeta,\chi}\left(t\right)-K_{2000,500}\left(t\right)\right|dt}{\int^{T}_{0}\left|K_{2000,500}\left(t\right)\right|dt}
\]
and similarly for $\delta_M$, $\delta_Z$, and $\delta_J$.

The corresponding results are collected in Table \ref{tab:orszag_optimal_stabilization}. Increasing $\zeta$ from 50 to 2000 with $\chi=1$ reduces
${\displaystyle \max_{n}\lVert \nabla\cdot\mathbf{u}^{n}_{h}\rVert}$
from $1.35\times10^{-5}$ to $1.18\times10^{-6}$, while
${\displaystyle \max_{n}\lVert \nabla\cdot\mathbf{B}^{n}_{h}\rVert}$
remains unchanged. After fixing $\zeta=2000$, increasing $\chi$ from 1 to 500 reduces
${\displaystyle \max_{n}\lVert \nabla\cdot\mathbf{B}^{n}_{h}\rVert}$
from $3.27\times10^{-6}$ to $2.53\times10^{-8}$, whereas
${\displaystyle \max_{n}\lVert \nabla\cdot\mathbf{u}^{n}_{h}\rVert}$
stays nearly the same. At the same time, the relative differences $\delta_K$, $\delta_M$, $\delta_Z$, and $\delta_J$ remain very small for all tested parameters. In particular, all of them are below $10^{-3}$, and for the larger values of $\chi$ they are much smaller. This shows that the stabilization parameters have a strong effect on the divergence norms, while their influence on the kinetic energy, magnetic energy, enstrophy, and current enstrophy is negligible on the scale of the present simulations.

\begin{table}[H]
\caption{Maximum divergence norms and relative differences in the kinetic energy, magnetic energy, enstrophy, and current enstrophy corresponding to different stabilization parameters. The quantities $\delta_{K}$, $\delta_{M}$, $\delta_{Z}$, and $\delta_{J}$ are defined with respect to the reference run with $(\zeta,\chi)=(2000,500)$.}\label{tab:orszag_optimal_stabilization}
\resizebox{\textwidth}{!}{
\begin{tabular}{@{\extracolsep{\fill}}ccccccc@{}}
\hline
$\left(\zeta,\chi\right)$ & ${\displaystyle \max_{n}\left\Vert \nabla\cdot\mathbf{u}^{n}_{h}\right\Vert }$ & ${\displaystyle \max_{n}\left\Vert \nabla\cdot\mathbf{B}^{n}_{h}\right\Vert }$ & $\delta_{K}$ & $\delta_{M}$ & $\delta_{Z}$ & $\delta_{J}$\\
\hline
(50, 1) & $1.3499\times10^{-5}$ & $3.2658\times10^{-6}$ & $3.9153\times10^{-4}$ & $7.7490\times10^{-4}$ & $1.5204\times10^{-4}$ & $8.7946\times10^{-4}$\\
(100, 1) & $1.3499\times10^{-5}$ & $3.2658\times10^{-6}$ & $3.9153\times10^{-4}$ & $7.7490\times10^{-4}$ & $1.5204\times10^{-4}$ & $8.7946\times10^{-4}$\\
(200, 1) & $1.1755\times10^{-5}$ & $3.2658\times10^{-6}$ & $3.9152\times10^{-4}$ & $7.7490\times10^{-4}$ & $1.5204\times10^{-4}$ & $8.7945\times10^{-4}$\\
(500, 1) & $4.7040\times10^{-6}$ & $3.2658\times10^{-6}$ & $3.9152\times10^{-4}$ & $7.7490\times10^{-4}$ & $1.5204\times10^{-4}$ & $8.7946\times10^{-4}$\\
(1000, 1) & $2.3524\times10^{-6}$ & $3.2658\times10^{-6}$ & $3.9152\times10^{-4}$ & $7.7490\times10^{-4}$ & $1.5204\times10^{-4}$ & $8.7946\times10^{-4}$\\
(2000, 1) & $1.1763\times10^{-6}$ & $3.2658\times10^{-6}$ & $3.9152\times10^{-4}$ & $7.7490\times10^{-4}$ & $1.5204\times10^{-4}$ & $8.7946\times10^{-4}$\\
(2000, 10) & $1.1792\times10^{-6}$ & $9.8632\times10^{-7}$ & $1.0414\times10^{-4}$ & $2.0551\times10^{-4}$ & $6.0049\times10^{-5}$ & $2.3292\times10^{-4}$\\
(2000, 50) & $1.1800\times10^{-6}$ & $2.4044\times10^{-7}$ & $2.3135\times10^{-5}$ & $4.5478\times10^{-5}$ & $1.4729\times10^{-5}$ & $5.1350\times10^{-5}$\\
(2000, 100) & $1.1801\times10^{-6}$ & $1.2360\times10^{-7}$ & $1.0572\times10^{-5}$ & $2.0777\times10^{-5}$ & $6.8492\times10^{-6}$ & $2.3434\times10^{-5}$\\
(2000, 200) & $1.1802\times10^{-6}$ & $6.2682\times10^{-8}$ & $4.0218\times10^{-6}$ & $7.9036\times10^{-6}$ & $2.6305\times10^{-6}$ & $8.9081\times10^{-6}$\\
(2000, 500) & $1.1802\times10^{-6}$ & $2.5289\times10^{-8}$ & -- & -- & -- & --\\
(2000, 1000) & $1.1802\times10^{-6}$ & $1.2681\times10^{-8}$ & $1.3563\times10^{-6}$ & $2.6655\times10^{-6}$ & $8.9438\times10^{-7}$ & $3.0025\times10^{-6}$\\
\hline
\end{tabular}
}
\end{table}

\subsubsection{Impact of Constant-order Fractional Derivatives on Energy-Enstrophy Measures}

We begin by analyzing Case 1. Figure \ref{fig:orszag_const} shows the effect of the constant fractional order $\alpha$ on the kinetic energy $K\left(t\right)$, magnetic energy $M\left(t\right)$, enstrophy $Z\left(t\right)$, and current enstrophy $J\left(t\right)$. 
The dependence on $\alpha$ is clearest in the magnetic quantities $M\left(t\right)$ and $J\left(t\right)$. In both plots, the classical case stays above all fractional cases, and smaller values of $\alpha$ lead to faster decay over the whole interval.

\begin{figure}[H]
\centering
\includegraphics[width=0.99\textwidth]{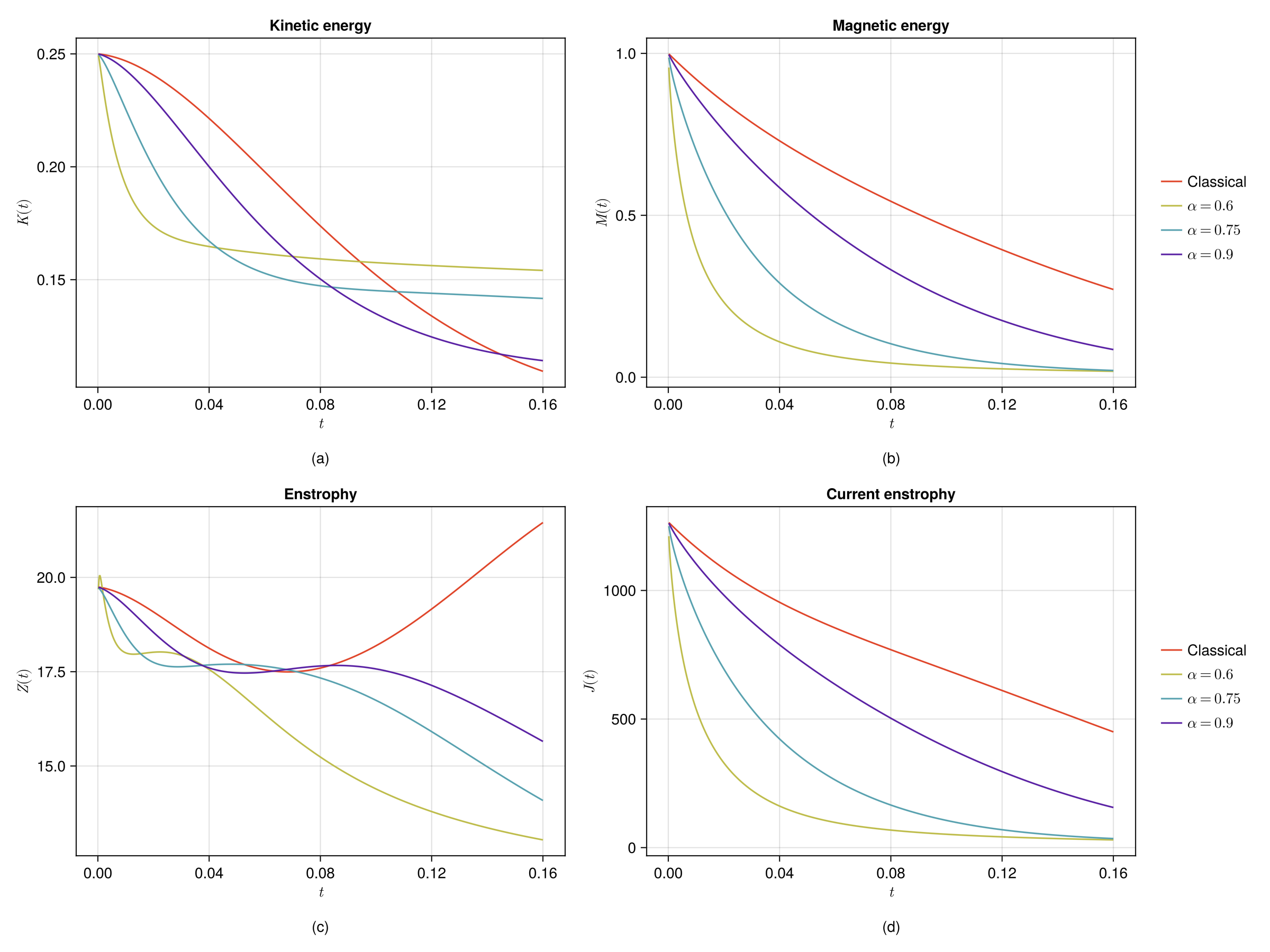}
\caption{Comparison of the diagnostics for the classical model $(\alpha\left(t\right)=\beta\left(t\right)\equiv 1)$ and the constant-order fractional cases, showing the effect of $\alpha$ on the evolution of the main energy and enstrophy measures:
\textbf{(a)}~Kinetic energy $K\left(t\right)$; 
\textbf{(b)}~Magnetic energy $M\left(t\right)$;
\textbf{(c)}~Enstrophy $Z\left(t\right)$;
\textbf{(d)}~Current enstrophy $J\left(t\right)$.}\label{fig:orszag_const}
\end{figure}

The kinetic energy $K\left(t\right)$ behaves differently. The cases $\alpha=0.6$ and $\alpha=0.75$ decrease faster at early times, but later their curves become flatter and stay above the classical one. The case $\alpha=0.9$ remains closer to the classical curve, although it also ends above it near the final time. Thus, the influence of $\alpha$ on $K\left(t\right)$ is not the same over the whole interval.

The enstrophy $Z\left(t\right)$ shows a clear difference between the classical and fractional cases. In the classical case, $Z\left(t\right)$ first decreases, reaches a minimum near the middle of the interval, and then increases strongly toward the final time. In all fractional cases, this final increase is absent, and the curves remain below the classical one in the second half of the interval.

There are also visible differences among the fractional cases themselves. For $\alpha=0.6$, the enstrophy drops most rapidly at early times and then continues to decrease after a short intermediate flattening. For $\alpha=0.75$, the behavior is similar, but the decrease is less steep and the flatter part is more pronounced. The case $\alpha=0.9$ stays closest to the classical curve for the longest time: after the initial decrease, it shows a mild increase around the middle of the interval, and only later turns downward. Thus, smaller values of $\alpha$ lead to an earlier and stronger reduction of enstrophy, while values closer to 1 retain a profile more similar to the classical one.

\subsubsection{Impact of Variable-order Fractional Derivatives on Energy-Enstrophy Measures}

Figure \ref{fig:orszag_variable} compares the variable-order profiles (Cases 2--5) with the classical model using the same diagnostics. In all four cases, the magnetic energy $M\left(t\right)$ and the current enstrophy $J\left(t\right)$ decay much faster than in the classical solution. At the same time, the variable-order curves are not identical, which shows that the results depend not only on the values of $\alpha\left(t\right)$, but also on how $\alpha\left(t\right)$ changes in time.

\begin{figure}[H]
\centering
\includegraphics[width=0.99\textwidth]{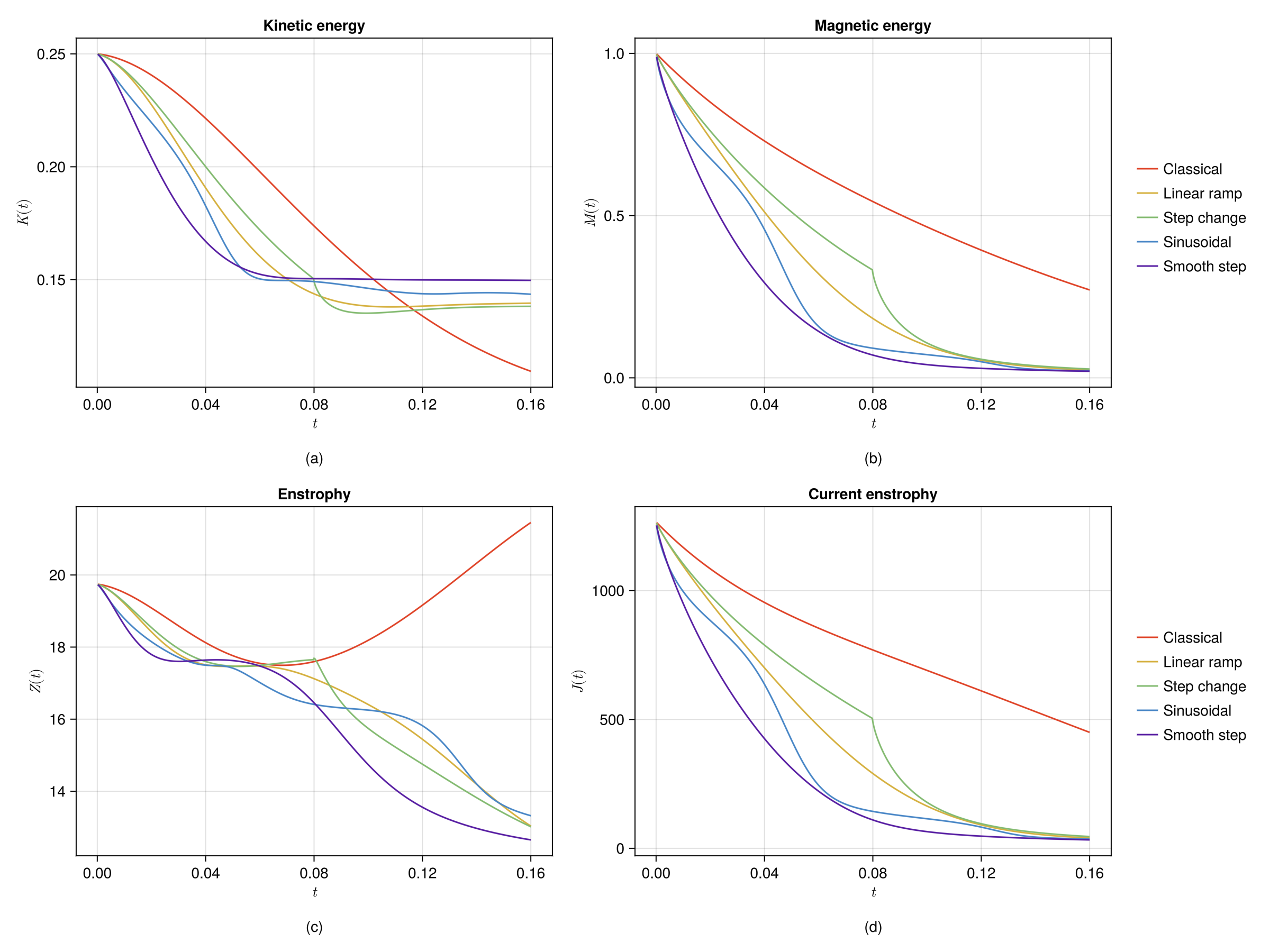}
\caption{Comparison of the diagnostics across classical model $\left(\alpha\equiv1\right)$
and variable-order fractional cases, showing the effect of $\alpha\left(t\right)$
on energy levels and small-scale activity:
\textbf{(a)} Kinetic energy $K\left(t\right)$; 
\textbf{(b)} Magnetic energy $M\left(t\right)$;
\textbf{(c)} Enstrophy $Z\left(t\right)$;
\textbf{(d)} Current enstrophy $J\left(t\right)$.}\label{fig:orszag_variable}
\end{figure}

These differences are most visible in $M\left(t\right)$ and $J\left(t\right)$. In the linear-ramp case, both quantities decrease smoothly. In the step-change case, the decay becomes steeper after the switching time. The smooth-step case behaves similarly, but the change is more gradual. The sinusoidal case is different from the others: both $M\left(t\right)$ and $J\left(t\right)$ show a flatter middle part before continuing to decrease.

The kinetic energy $K\left(t\right)$ behaves differently. All variable-order cases decrease faster than the classical one at early times, but later they flatten and remain above the classical curve. The smooth-step case stays highest at later times, while the linear-ramp, step-change, and sinusoidal cases remain closer to each other.

The enstrophy $Z\left(t\right)$ also differs clearly from the classical case. In the classical solution, it first decreases and then grows strongly near the final time. This final growth is absent in all variable-order cases. The linear-ramp case decreases rather regularly after the initial stage. The step-change case stays close to it at first and then drops faster after the change in order. The sinusoidal case remains higher for longer in the middle of the interval, while the smooth-step case decreases more steadily and reaches the smallest values near the end.

Since the kinetic and magnetic energies are not separately required to be monotone in MHD, we also monitor the total energy $K\left(t\right)+M\left(t\right)$. As shown in Figure \ref{fig:orszag_total}(a), the total energy decreases for all variable-order profiles. Thus, the mild late-time increase of $K\left(t\right)$ observed in some cases does not indicate growth of the total energy.

\begin{figure}[H]
\centering
\includegraphics[width=0.99\textwidth]{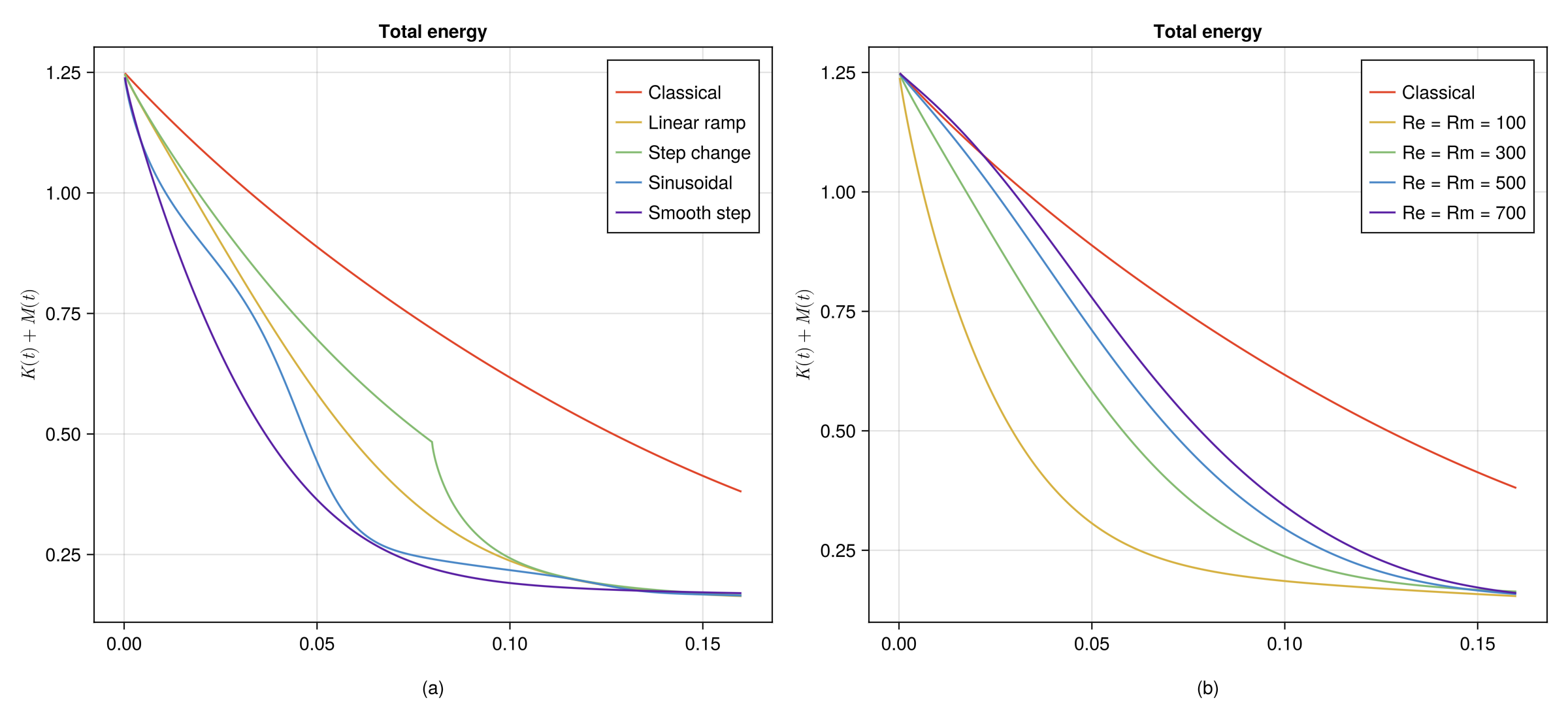}
\caption{Total energy $K\left(t\right)+M\left(t\right)$ for \textbf{(a)} variable-order profiles and \textbf{(b)} different Reynolds numbers. The decay of the total energy shows that the observed late-time increase of $K\left(t\right)$ does not correspond to growth of the total energy.}
\label{fig:orszag_total}
\end{figure}

\subsubsection{The Case of Asynchronous Variable Orders}
\label{subsec:unsymmetric}

In this experiment, we consider six configurations in which $\alpha(t)$ and $\beta(t)$ vary independently. Each configuration is denoted AR-$XY$, where $X\in\{\mathrm{U},\mathrm{D},\mathrm{C}\}$ describes the trend of $\alpha(t)$, $Y\in\{\mathrm{U},\mathrm{D},\mathrm{C}\}$ describes the trend of $\beta(t)$, and $\mathrm{U}$, $\mathrm{D}$, and $\mathrm{C}$ denote increasing, decreasing, and constant profiles, respectively:

Case AR-UD: ${\displaystyle \alpha\left(t\right)=0.6+\frac{0.3t}{T}}$, ${\displaystyle \beta\left(t\right)=0.9-\frac{0.3t}{T}}$.

Case AR-DU: ${\displaystyle \alpha\left(t\right)=0.9-\frac{0.3t}{T}}$, ${\displaystyle \beta\left(t\right)=0.6+\frac{0.3t}{T}}$.

Case AR-UC: ${\displaystyle \alpha\left(t\right)=0.6+\frac{0.3t}{T}}$, $\beta\left(t\right)=0.75$.

Case AR-DC: ${\displaystyle \alpha\left(t\right)=0.9-\frac{0.3t}{T}}$, $\beta\left(t\right)=0.75$.

Case AR-CU: $\alpha\left(t\right)=0.75$, ${\displaystyle \beta\left(t\right)=0.6+\frac{0.3t}{T}}$.

Case AR-CD: $\alpha\left(t\right)=0.75$, ${\displaystyle \beta\left(t\right)=0.9-\frac{0.3t}{T}}$.

Figure \ref{fig:orszag_unsymmetric} compares the classical model with six cases in which $\alpha\left(t\right)$ and $\beta\left(t\right)$ evolve differently. In all six cases, the fractional curves separate from the classical one soon after $t=0$, so varying $\alpha\left(t\right)$ and $\beta\left(t\right)$ independently affects all four diagnostics.

The kinetic energy $K\left(t\right)$ shows a clear spread among the fractional cases. At later times, the largest values are attained by AR-DU and AR-CU, while the smallest values are given by AR-UD. The cases AR-UC and AR-DC lie between these two groups, and AR-CD ends close to AR-UC. Thus, the late-time values of $K\left(t\right)$ differ noticeably from one profile to another.

\begin{figure}[H]
\centering
\includegraphics[width=0.99\textwidth]{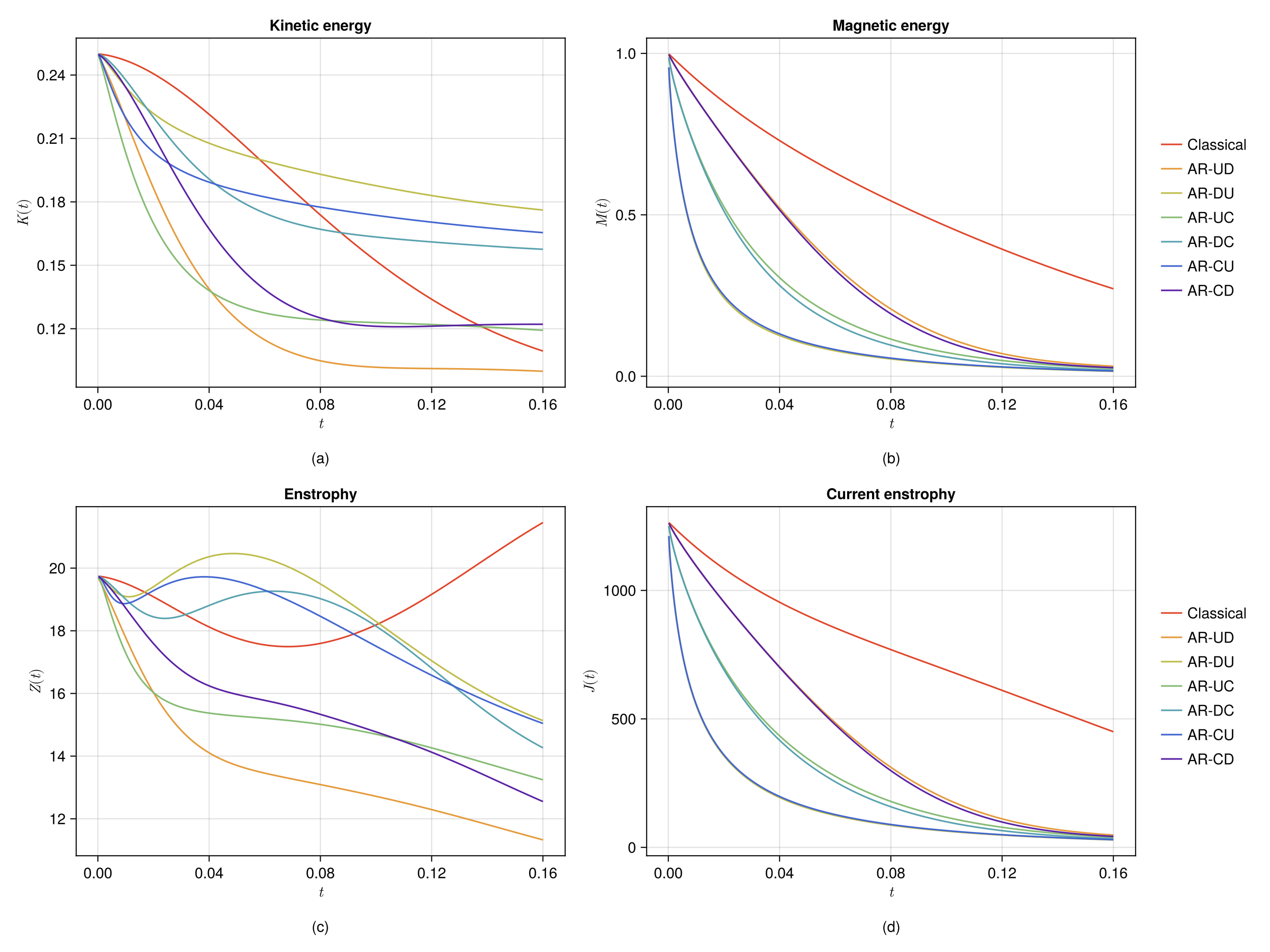}
\caption{Comparison of the diagnostics across classical model $\left(\alpha\left(t\right)=\beta\left(t\right)\equiv1\right)$
and asynchronous variable-order fractional cases, showing the effect of 
independently varying $\alpha\left(t\right)$ and  $\beta\left(t\right)$:
\textbf{(a)}~Kinetic energy $K\left(t\right)$; 
\textbf{(b)}~Magnetic energy $M\left(t\right)$;
\textbf{(c)}~Enstrophy $Z\left(t\right)$;
\textbf{(d)}~Current enstrophy $J\left(t\right)$.}\label{fig:orszag_unsymmetric}
\end{figure}

The magnetic energy $M\left(t\right)$ and the current enstrophy $J\left(t\right)$ show a clearer pattern. In all six fractional cases, both quantities decay much faster than in the classical solution. The slowest decay among the fractional runs is observed for AR-UD and AR-CD, which remain closest to the classical curve. The fastest decay is seen for AR-DU and AR-CU. The remaining two cases, AR-UC and AR-DC, stay between these groups.

The enstrophy $Z\left(t\right)$ also differs clearly from the classical case. In the classical solution, it decreases at first and then grows strongly near the final time. This final growth is absent in all six fractional cases. Among them, AR-DU reaches the highest values in the middle part of the interval, followed by AR-CU and AR-DC. The cases AR-UC and AR-CD remain lower, while AR-UD stays lowest for most of the interval and decreases most strongly toward the end.

Thus, the plots show that allowing $\alpha\left(t\right)$ and $\beta\left(t\right)$ to evolve differently changes not only the size of the magnetic quantities, but also the relative behavior of the velocity-related diagnostics. The clearest separation appears again in $M\left(t\right)$ and $J\left(t\right)$, while $K\left(t\right)$ and $Z\left(t\right)$ show a wider spread between the different profiles.

\subsubsection{Analysis of Relative Changes in Integral Quantities}

To summarize the influence of the fractional orders on the global diagnostics, we introduce the relative $L^{1}\left(0,T\right)$ deviation
\[
\Delta K=
\frac{\int_{0}^{T} \left|K_\alpha\left(t\right)-K_{1}\left(t\right)\right|\,dt}
{\int_{0}^{T} \left|K_{1}\left(t\right)\right|\,dt},
\]
where $K_{\alpha}\left(t\right)$ and $K_{1}\left(t\right)$ denote the kinetic energy in the fractional and classical cases, respectively. The quantities $\Delta M$, $\Delta Z$, and $\Delta J$ are defined in the same way for the magnetic energy, enstrophy, and current enstrophy. The values are listed in Table \ref{tab:orszag_DeltaQ}.

\begin{table}[!ht]
\caption{Relative differences in the integral diagnostics with respect to
the classical model $\left(\alpha\left(t\right)\equiv1,\,\beta\left(t\right)\equiv 1\right)$.}\label{tab:orszag_DeltaQ}
\begin{tabular*}{\textwidth}{@{\extracolsep{\fill}}ccccc@{}}
\hline
\textbf{Case} & ${\Delta K}$ & ${\Delta M}$ & ${\Delta Z}$ & ${\Delta J}$\\
\hline
Constant-order (Case 1), $\alpha\left(t\right)\equiv0.6$ & 0.1923 & 0.8121 & 0.1721 & 0.8080\\
Constant-order (Case 1), $\alpha\left(t\right)\equiv0.75$ & 0.1577 & 0.6363 & 0.1053 & 0.6314\\
Constant-order (Case 1), $\alpha\left(t\right)\equiv0.9$ & 0.0783 & 0.3028 & 0.0697 & 0.2912\\
Linear ramp (Case 2) & 0.1150 & 0.4689 & 0.1165 & 0.4717\\
Step change (Case 3) & 0.0961 & 0.3924 & 0.1254 & 0.3983\\
Sinusoidal (Case 4) & 0.1341 & 0.5664 & 0.1246 & 0.5701\\
Smooth step (Case 5) & 0.1624 & 0.6481 & 0.1584 & 0.6490\\
Asynchronous ramp (Case 6), AR-UD & 0.2821 & 0.4447 & 0.2774 & 0.4564\\
Asynchronous ramp (Case 6), AR-DU & 0.1647 & 0.7990 & 0.1153 & 0.7884\\
Asynchronous ramp (Case 6), AR-UC & 0.2321 & 0.6208 & 0.2001 & 0.6199\\
Asynchronous ramp (Case 6), AR-DC & 0.1282 & 0.6455 & 0.1026 & 0.6374\\
Asynchronous ramp (Case 6), AR-CU & 0.1605 & 0.7937 & 0.1039 & 0.7852\\
Asynchronous ramp (Case 6), AR-CD & 0.1723 & 0.4592 & 0.1810 & 0.4660\\
\hline
\end{tabular*}
\end{table}

For the constant-order cases, all four deviations decrease as $\alpha$ increases from 0.6 to 0.9. Thus, in the constant-order case, the cumulative difference from the classical case decreases as the order approaches 1.

Among the symmetric variable-order cases (Cases 2--5), the step-change profile gives the smallest values of $\Delta K$, $\Delta M$, and $\Delta J$, while the smooth-step profile gives the largest values of these quantities. The linear-ramp and sinusoidal cases lie between these two. In all four cases, $\Delta M$ and $\Delta J$ are clearly larger than $\Delta K$ and $\Delta Z$, which suggests that the magnetic diagnostics are more sensitive than the kinetic ones.

The asynchronous cases (Case 6) show a different pattern. The largest kinetic deviation is obtained for AR-UD, followed by AR-UC, whereas the largest magnetic deviations are obtained for AR-DU and AR-CU. The cases AR-UC and AR-DC lie between these extremes, while AR-CD remains closer to AR-UD in the magnetic diagnostics. The enstrophy deviations also vary noticeably, with the largest value attained by AR-UD.
It follows from the table that once $\alpha\left(t\right)$ and $\beta\left(t\right)$ vary independently, the cumulative deviation from the classical solution is no longer described by a single common trend. Some profiles produce the largest changes in the kinetic energy and enstrophy, whereas others produce the largest changes in the magnetic energy and current enstrophy.

\subsubsection{Influence of the Reynolds Numbers on Energy-Enstrophy Measures}

Figure~\ref{fig:orszag_re} shows the influence of the Reynolds numbers $\mathrm{Re}=\mathrm{Rm}$ on the diagnostics for the linear-ramp case (Case~2). The clearest effect is seen in the magnetic energy $M\left(t\right)$ and the current enstrophy $J\left(t\right)$. As $\mathrm{Re}=\mathrm{Rm}$ increases from 100 to 700, both quantities decay more slowly, and their curves remain successively higher over the whole time interval.

The kinetic energy $K\left(t\right)$ behaves differently. After a short initial stage, the case $\mathrm{Re}=\mathrm{Rm}=100$ stays above the others over most of the interval. Among the cases $\mathrm{Re}=\mathrm{Rm}=300,500,700$, the larger Reynolds numbers generally give smaller values of $K(t)$ over most of the time interval.

The enstrophy $Z\left(t\right)$ shows a different pattern. For $\mathrm{Re}=\mathrm{Rm}=100$, it decreases over the whole interval. For $\mathrm{Re}=\mathrm{Rm}=300$, the curve first decreases, then shows a mild rise, and finally decreases again. For $\mathrm{Re}=\mathrm{Rm}=500$ and 700, this rise becomes much more pronounced, and the largest peak is reached for $\mathrm{Re}=\mathrm{Rm}=700$.

Thus, increasing $\mathrm{Re}$ and $\mathrm{Rm}$ slows down the decay of $M\left(t\right)$ and $J\left(t\right)$. The effect on $K\left(t\right)$ is different and is not ordered in the same way. For $Z\left(t\right)$, larger Reynolds numbers lead to a stronger rise at intermediate times and to a higher peak before the final decay.

\begin{figure}
\centering
\includegraphics[width=\textwidth]{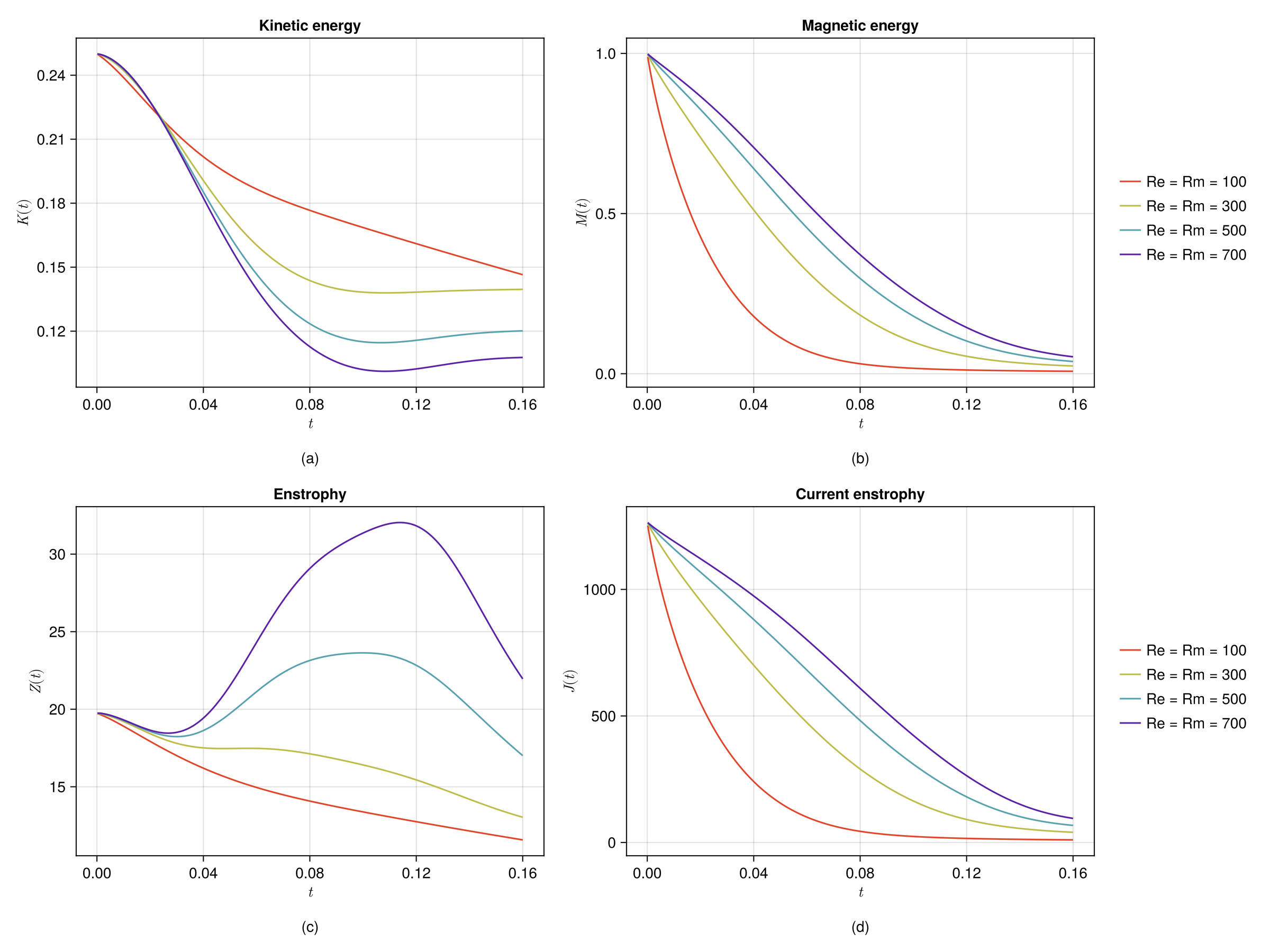}
\caption{Comparison of the diagnostics across different values of the Reynolds numbers $\mathrm{Re}=\mathrm{Rm}$:
\textbf{(a)}~Kinetic energy $K\left(t\right)$; 
\textbf{(b)}~Magnetic energy $M\left(t\right)$;
\textbf{(c)}~Enstrophy $Z\left(t\right)$;
\textbf{(d)}~Current enstrophy $J\left(t\right)$.}
\label{fig:orszag_re}
\end{figure}

The mild late-time increase of $K\left(t\right)$ for larger Reynolds numbers is again compensated by the decay of $M\left(t\right)$. The corresponding total-energy curves, shown in Figure \ref{fig:orszag_total}(b), remain decreasing throughout the interval.

\subsubsection{Analysis of the Divergence Constraints}

Figure \ref{fig:orszag_div_errors} shows the time evolution of the divergence norms for the variable-order profiles considered in the tests above. In all cases, the velocity divergence $\lVert\nabla\cdot \mathbf{u}_{h}\rVert$ remains small over the whole interval. Its values are of order $10^{-6}$ at early times, decrease further around the middle of the simulation, and then remain at the level of $10^{-7}$. Although the detailed shape depends on the chosen order profile, no growth to large values is observed.

For the magnetic field, both the values before cleaning and the values after cleaning are shown. Before cleaning, $\lVert\nabla\cdot \mathbf{B}_{h}\rVert$ stays at the level of $10^{-4}$ throughout the interval. After cleaning, it is reduced to the level of $10^{-8}$ for all considered profiles. In both cases, the curves remain bounded and show only moderate variation in time. Thus, for all variable-order functions used in the computations above, the divergence errors remain under control during the whole simulation interval.

\begin{figure}
  \centering
  \includegraphics[width=\textwidth]{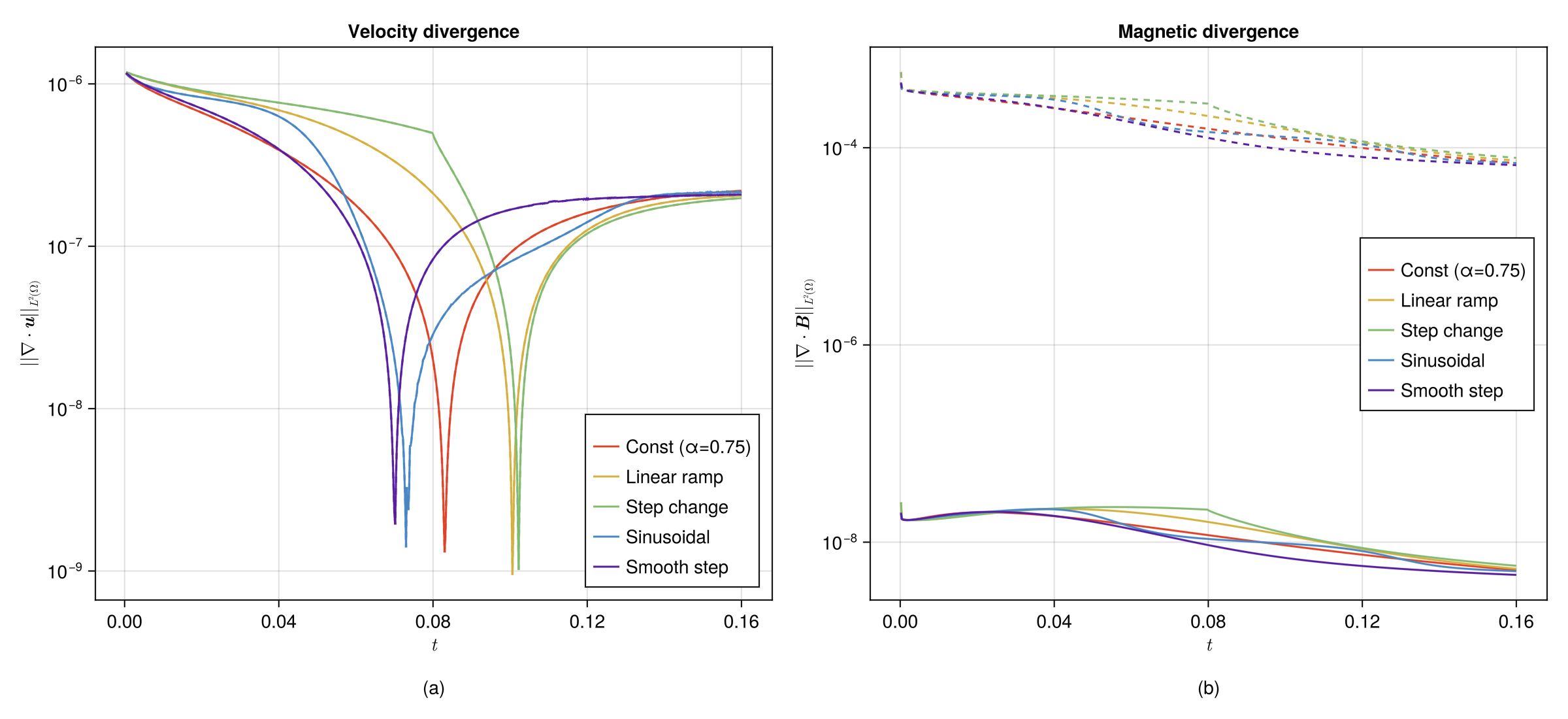}
  \caption{Time evolution of the divergence norms for several representative variable-order profiles.
\textbf{(a)} Velocity divergence $\lVert\nabla\cdot\mathbf{u}_{h}\rVert$.
\textbf{(b)} Magnetic divergence $\lVert\nabla\cdot\mathbf{B}_h\rVert$, with dashed lines corresponding to the values before cleaning and solid lines to the values after cleaning.}
\label{fig:orszag_div_errors}
\end{figure}

\subsection{Phase Diagrams in the Order-Range Plane}

We next investigate how the initial and final values of the variable-order temporal fractional derivative, denoted by $\left(\alpha_{0},\alpha_{T}\right)$, influence diagnostics of the MHD flow. In this experiment, $\left(\alpha_{0},\alpha_{T}\right)$ are the endpoints of the linear-ramp profile
\begin{equation}
\alpha\left(t\right)=\beta\left(t\right)=\alpha_{0}+(\alpha_{T}-\alpha_{0})\frac{t}{T},\qquad t\in\left[0,T\right],
\label{eq:phase_linear_ramp}
\end{equation}
with $T=0.16$, so that $\alpha\left(0\right)=\alpha_{0}$ and $\alpha\left(T\right)=\alpha_{T}$. The initial and boundary data, as well as all remaining parameters, are the same as in Section \ref{subsec:Orszag}.

For each pair $\left(\alpha_{0},\alpha_{T}\right)$ on the grid 
$0.5\le \alpha_{0}\le 0.9$, $0.5\le \alpha_{T}\le 0.9$
we compute the time histories of the diagnostic quantities $K\left(t\right)$, $M\left(t\right)$, $Z\left(t\right)$, and $J\left(t\right)$, and then evaluate their time integrals
\[
I_{K}=\int_{0}^{T} K\left(t\right)\,dt,\qquad
I_{M}=\int_{0}^{T} M\left(t\right)\,dt,\qquad
I_{Z}=\int_{0}^{T} Z\left(t\right)\,dt,\qquad
I_{J}=\int_{0}^{T} J\left(t\right)\,dt.
\]

To measure the effect of $\left(\alpha_{0},\alpha_{T}\right)$, we compare these quantities with the classical reference solution and define
\[
\delta {I}_{K}=\frac{I_{K}-I_{K}^{\mathrm{ref}}}{I_{K}^{\mathrm{ref}}},\qquad
\delta {I}_{M}=\frac{I_{M}-I_{M}^{\mathrm{ref}}}{I_{M}^{\mathrm{ref}}},\qquad
\delta {I}_{Z}=\frac{{I}_{Z}-{I}_{Z}^{\mathrm{ref}}}{{I}_{Z}^{\mathrm{ref}}},\qquad
\delta {I}_{J}=\frac{{I}_{J}-{I}_{J}^{\mathrm{ref}}}{{I}_{J}^{\mathrm{ref}}}.
\]
The heatmaps in Figure \ref{fig:phase_heatmaps} summarize how these relative deviations vary over the $(\alpha_{0},\alpha_{T})$-plane.

\begin{figure}[!htb]
  \centering
  \includegraphics[width=\textwidth]{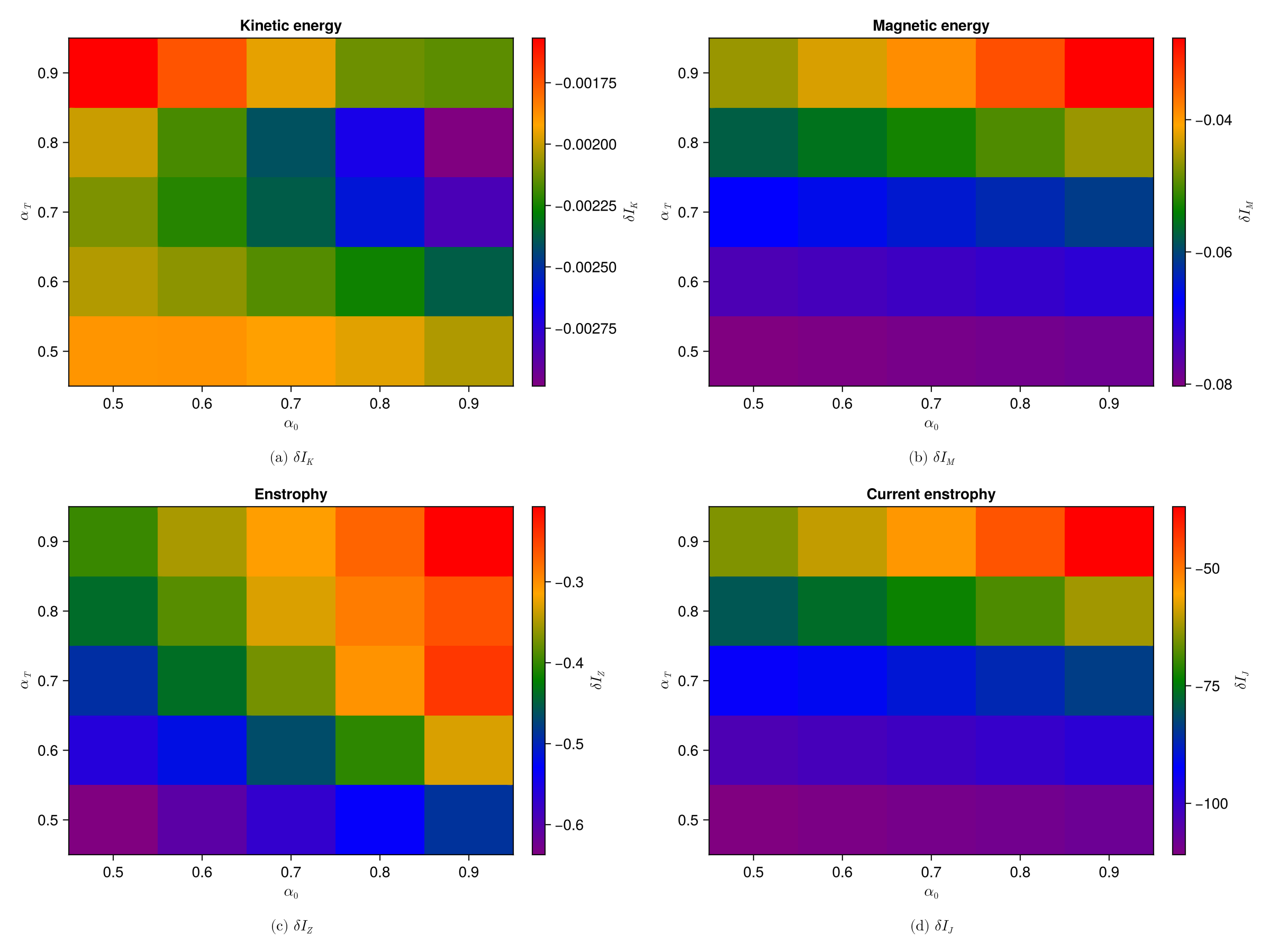}
  \caption{Heatmaps of the relative deviations of the time-integrated diagnostics for the linear-ramp profile (\ref{eq:phase_linear_ramp}).
The horizontal and vertical axes correspond to the values of $\alpha_{0}$ and $\alpha_{T}$, respectively. 
\textbf{(a)} $\delta I_{K}$, relative deviation of the time-integrated kinetic energy; 
\textbf{(b)} $\delta I_{M}$, relative deviation of the time-integrated magnetic energy; 
\textbf{(c)} $\delta I_{Z}$, relative deviation of the time-integrated enstrophy; 
\textbf{(d)} $\delta I_{J}$, relative deviation of the time-integrated current enstrophy. 
All quantities are computed relative to the classical reference solution.}
\label{fig:phase_heatmaps}
\end{figure}

The heatmaps show that all four quantities satisfy $\delta I_{K}<0$, $\delta I_{M}<0$, $\delta I_{Z}<0$, and $\delta I_{J}<0$ throughout the tested range. Therefore, for all considered ramp profiles, the time-integrated kinetic energy, magnetic energy, enstrophy, and current enstrophy are smaller than in the classical case. The largest changes appear in $I_{J}$ and $I_{Z}$. The changes in $I_{M}$ are also visible but smaller, whereas the changes in $I_{K}$ are much weaker.

The dependence on $\left(\alpha_{0},\alpha_{T}\right)$ is clearest in $\delta I_{M}$, $\delta I_{Z}$, and $\delta I_{J}$. In these three maps, the deviations become more negative when the order range moves toward smaller values, and less negative as $\left(\alpha_{0},\alpha_{T}\right)$ approaches $\left(0.9,0.9\right)$. For $\delta I_{K}$, the same pattern is only weakly visible and is less regular.

Taken together, these maps show that the order range $\left(\alpha_{0},\alpha_{T}\right)$ affects all four integrated diagnostics. The values of $I_{M}$, $I_{Z}$, and $I_{J}$ are the most sensitive among those considered, while $I_{K}$ is the least sensitive.

\section{Discussion}
\label{sec:Discussion}

The results obtained in this work show that replacing the integer-order time derivatives in the incompressible MHD system with variable-order Caputo derivatives allows a wider range of transient behaviors to be described within the same governing equations. From a physical point of view, the variable-order fractional terms introduce a memory effect whose strength changes in time. In qualitative terms, smaller fractional orders correspond to stronger temporal memory, whereas values closer to unity lead to behavior that is closer to the classical incompressible MHD model. In this way, the order functions $\alpha\left(t\right)$ and $\beta\left(t\right)$ can be used to describe time-dependent changes in memory strength.

Although the variable-order fractional MHD tests considered here do not use
exactly the same initial data as many standard benchmark studies
\cite{Borve2006,Kayanikhoo2023,Hiptmair2018}, two qualitative checks are still
useful. First, in the classical case $\alpha\left(t\right),\beta\left(t\right)\equiv 1$, the
computed solution shows the expected dissipative behavior of a periodic
divergence-free MHD vortex: the kinetic and magnetic energies decay in time,
while the enstrophy and current-enstrophy curves provide additional information
on the evolution of velocity and magnetic gradients. Such qualitative
diagnostics are often used in the MHD literature when assessing numerical
solvers on periodic vortex-type problems \cite{Orszag1979,Jadhav2021,Grauer1998}.
Therefore, the run with $\alpha\left(t\right),\beta\left(t\right)\equiv 1$ provides a consistent
reference for interpreting the fractional-order cases computed with the same
discretization, parameters, and diagnostics.

Second, varying the fractional order produces visible changes in all diagnostics over the whole time interval. These changes are seen in the levels of the curves and, in particular for $Z$ and $J$, in the timing of the transient extrema. When $\alpha\left(t\right)$ and $\beta\left(t\right)$ remain close to 1, the diagnostics stay closer to the classical solution. Lower orders, or stronger variation in time, lead to larger deviations and more noticeable shifts in the extrema. This suggests that variable-order memory affects both the overall level of the diagnostics and the transient development of the flow.

Similar sensitivity to the order profile has also been reported in other variable-order fractional flow models. For example, porous-media studies \cite{Alimbekova2024b} comparing constant, increasing, and decreasing orders show that different choices of the order function can lead to visibly different flow responses. Although the governing equations in those models differ from the present incompressible MHD system, the qualitative point is similar: changing the order profile modifies the strength of the memory effect and thereby changes the transient evolution.

Finally, we recall that the present $H^{1}$-conforming discretization does not enforce the divergence constraints exactly at the discrete level. Therefore, the divergence norms reported in this work are expected to be small but not identically zero. In the convergence tests, moderate stabilization parameters were sufficient to keep these quantities small, while in the periodic divergence-free vortex test larger stabilization parameters were used to obtain smaller divergence errors.

The main limitation of the present study is that the numerical investigation has been carried out for a finite set of order functions. A natural extension is to consider a broader class of variable-order profiles $\alpha\left(t\right)$ and $\beta\left(t\right)$, including dependencies on spatial variables. Another important direction is comparison with experimental data or with high-accuracy numerical simulations for classical MHD.

\section{Conclusions}
\label{sec:Conclusions}

This work studied a variable-order time-fractional incompressible MHD model in which the classical first-order time derivatives in the momentum and induction equations are replaced by Caputo derivatives with time-dependent order. For the resulting problem, a fully discrete finite element--L1 scheme was proposed and analyzed through stability and convergence results. The scheme was then examined numerically by means of convergence tests, a classical-limit study as the orders approach unity, computations for the periodic divergence-free vortex, and order-range sensitivity maps for selected diagnostics.

The numerical results lead to several main observations. First, the finite element--L1 discretization shows the expected temporal convergence orders for the representative variable-order profiles considered in this work. Second, the classical-limit test confirms consistency with the standard incompressible MHD model: as the fractional orders approach one, the differences between the fractional and classical solutions decrease, and the corresponding diagnostic curves become closer. Third, in the periodic divergence-free vortex test, changing the order functions produces clear changes in the kinetic and magnetic energies, enstrophy, and current enstrophy. Finally, the order-range maps show how these diagnostics vary over the $(\alpha_{0},\alpha_{T})$-plane and indicate regions where the influence of variable order is stronger or weaker.

Taken together, these results show that variable-order Caputo time derivatives provide a useful way to describe time-dependent memory effects in incompressible MHD while remaining consistent with the classical limit. Future work will include the study of broader classes of order functions in the momentum and induction equations, additional benchmark problems and boundary conditions, extension of the analysis to more general variable-order cases, and acceleration techniques for long-time simulations.

\section*{Acknowledgements}

This research was funded by the Science Committee of the Ministry of Science and Higher Education of the Republic of Kazakhstan (Grant No. AP26101983).

\section*{Appendix A. Discrete fractional Gr\"{o}nwall inequality}
\label{appendix:Gronwall}

For the reader’s convenience, we restate the kernel assumptions and the discrete fractional Grönwall theorem of \cite{Liao_2019} below in notation adapted to the present paper.

Let $0=t_{0}<t_{1}<...<t_{N}=T$, $\tau_{n}=t_{n}-t_{n-1}$, let
$0\leq\theta<1$, and let $\gamma\in\left(0,1\right)$. For any sequence $\left\{ v^{n}\right\} ^{N}_{n=0}$,
define
\[
t_{n-\theta}=\theta t_{n-1}+\left(1-\theta\right)t_{n},\qquad v^{n-\theta}=\theta v^{n-1}+\left(1-\theta\right)v^{n},\qquad\nabla_{\tau}v^{n}=v^{n}-v^{n-1}.
\]

Consider a discrete fractional derivative written in the kernel form,
\[
\left(\mathcal{D}^{\gamma}_{\tau}v\right)^{n-\theta}=\sum^{n}_{k=1}A^{\left(n\right)}_{n-k}\nabla_{\tau}v^{k},\qquad1\leq n\leq N,
\]
where the coefficients $A^{\left(n\right)}_{n-k}$ are referred to
as discrete kernels. The abstract theory of \cite{Liao_2019}
is formulated for kernels of this form under the following structural
assumptions. 

\textbf{Assumption A1.} For each fixed $n$, the kernel sequence is positive
and nonincreasing:
\[
A^{\left(n\right)}_{0}\geq A^{\left(n\right)}_{1}\geq A^{\left(n\right)}_{2}\geq...\geq A^{\left(n\right)}_{n-1}>0.
\]

\textbf{Assumption A2.} There exists a constant 
$\pi_{A}>0$, independent of the time steps, such that
\[
A^{\left(n\right)}_{n-k}\geq\frac{1}{\pi_{A}\tau_{k}}\int^{t_{k}}_{t_{k-1}}\omega_{1-\gamma}\left(t_{n}-s\right)ds,\qquad1\leq k\leq n\leq N,
\]
where ${\displaystyle \omega_{1-\gamma}\left(t\right)=\frac{t^{-\gamma}}{\Gamma\left(1-\gamma\right)}}$.

\textbf{Assumption A3.} There exists a constant $\rho>0$ such that the local
step ratios satisfy
\[
\rho_{k}=\frac{\tau_{k}}{\tau_{k+1}}\leq\rho,\qquad1\leq k\leq N-1.
\]

When A1--A2 hold, the associated complementary kernels
$\{\mathcal P_{n-j}^{(n)}\}_{j=1}^{n}$ are defined as the unique kernels
satisfying
\[
\sum_{j=m}^{n}\mathcal P_{n-j}^{(n)}A_{j-m}^{(j)}=1,
\qquad 1\le m\le n\le N.
\]

\textbf{Theorem (Discrete fractional Gr\"{o}nwall inequality).}
Assume that A1--A3 hold.
Let $\left\{ g^{n}\right\} ^{N}_{n=1}$ and $\left\{ \lambda_{\ell}\right\} ^{N-1}_{\ell=0}$
be nonnegative sequences, and suppose that there exists a constant
$\Lambda$, independent of the time steps, such that ${\displaystyle \Lambda\geq\sum^{N-1}_{\ell=0}\lambda_{\ell}}$.
Assume further that the maximum time step satisfies
\[
\max_{1\leq n\leq N}\tau_{n}\leq\left(2\pi_{A}\Gamma\left(2-\gamma\right)\Lambda\right)^{-1/\gamma}.
\]

If a nonnegative sequence $\left\{ v^{n}\right\} ^{N}_{n=0}$ satisfies
\[
\sum^{n}_{k=1}A^{\left(n\right)}_{n-k}\nabla_{\tau}v^{k}\leq\sum^{n}_{k=1}\lambda_{n-k}v^{k-\theta}+g^{n},\qquad1\leq n\leq N,
\]
then, for every $1\leq n\leq N$,
\[
v^{n}\leq2\mathbb{E}_{\gamma}\left(2\max\left(1,\rho\right)\pi_{A}\Lambda t^{\gamma}_{n}\right)\left(v^{0}+\max_{1\leq k\leq n}\sum^{k}_{j=1}\mathcal{P}^{\left(k\right)}_{k-j}g^{j}\right),
\]
where $\mathbb{E}_{\gamma}$ denotes the Mittag--Leffler function.

\textbf{Remark.} Under Assumptions A1--A2, the complementary kernels are nonnegative. By Remark 1 of \cite{Liao_2019}, they satisfy
\[
\sum_{j=1}^{k}\mathcal{P}^{(k)}_{k-j}\omega_{1-\gamma}(t_j)\leq \pi_A,\qquad 1\leq k\leq N.
\]
Hence
\[
\sum_{j=1}^{k}\mathcal{P}^{(k)}_{k-j}g^{j}
\leq
\pi_A\max_{1\leq j\leq k}\frac{g^{j}}{\omega_{1-\gamma}(t_j)}
=
\pi_A\Gamma(1-\gamma)\max_{1\leq j\leq k}\bigl(t_j^\gamma g^j\bigr),
\qquad 1\leq k\leq N.
\]
In particular, if $g^j\equiv g$ is independent of $j$, then
\[
\sum_{j=1}^{k}\mathcal{P}^{(k)}_{k-j}g
\leq
\pi_A\Gamma(1-\gamma)t_k^\gamma g
\leq
\pi_A\Gamma(1-\gamma)T^\gamma g.
\]

\bibliographystyle{cas-model2-names}
\bibliography{cas-refs}

@book{Moreau1990,
	author = {Moreau, Ren{\'e}},
	doi = {10.1007/978-94-015-7883-7},
	isbn = {9789401578837},
	issn = {0926-5112},
	journal = {Fluid Mechanics and Its Applications},
	publisher = {Springer Netherlands},
	title = {Magnetohydrodynamics},
	year = {1990}}

@book{Davidson2001,
	author = {Davidson, P. A.},
	doi = {10.1017/cbo9780511626333},
	isbn = {9780511626333},
	publisher = {Cambridge University Press},
	title = {An Introduction to Magnetohydrodynamics},
	year = {2001}}

@article{Roberts2013,
	author = {Roberts, Paul H and King, Eric M},
	doi = {10.1088/0034-4885/76/9/096801},
	issn = {1361-6633},
	journal = {Reports on Progress in Physics},
	number = {9},
	pages = {096801},
	publisher = {IOP Publishing},
	title = {On the genesis of the {Earth's} magnetism},
	volume = {76},
	year = {2013}}

@book{Mainardi2010a,
	author = {Mainardi, Francesco},
	doi = {10.1142/p614},
	isbn = {9781848163300},
	publisher = {Imperial College Press},
	title = {Fractional Calculus and Waves in Linear Viscoelasticity: An Introduction to Mathematical Models},
	year = {2010}}

@article{Metzler2000,
	author = {Metzler, Ralf and Klafter, Joseph},
	doi = {10.1016/s0370-1573(00)00070-3},
	issn = {0370-1573},
	journal = {Physics Reports},
	number = {1},
	pages = {1--77},
	publisher = {Elsevier BV},
	title = {The random walk's guide to anomalous diffusion: a fractional dynamics approach},
	volume = {339},
	year = {2000}}

@article{Baishemirov2024,
	title = {Efficient Numerical Implementation of the Time-Fractional Stochastic {Stokes–Darcy} Model},
	volume = {8},
	ISSN = {2504-3110},
	DOI = {10.3390/fractalfract8080476},
	number = {8},
	journal = {Fractal and Fractional},
	publisher = {MDPI AG},
	author = {Baishemirov,  Zharasbek and Berdyshev,  Abdumauvlen and Baigereyev,  Dossan and Boranbek,  Kulzhamila},
	year = {2024},
	pages = {476}
}

@article{Madiyarov2025,
	title = {Nonlocal Modeling and Inverse Parameter Estimation of Time-Varying Vehicular Emissions in Urban Pollution Dynamics},
	volume = {13},
	ISSN = {2227-7390},
	DOI = {10.3390/math13172772},
	number = {17},
	journal = {Mathematics},
	publisher = {MDPI AG},
	author = {Madiyarov,  Muratkan and Alimbekova,  Nurlana and Bakishev,  Aibek and Mukhamediyev,  Gabit and Yergaliyev,  Yerlan},
	year = {2025},
	pages = {2772}
}

@article{Abdiramanov2024,
	title = {An Implicit Difference Scheme for a Mixed Problem of Hyperbolic Type with Memory},
	volume = {45},
	ISSN = {1818-9962},
	DOI = {10.1134/s1995080224600249},
	number = {2},
	journal = {Lobachevskii Journal of Mathematics},
	publisher = {Pleiades Publishing Ltd},
	author = {Abdiramanov,  Zh. A. and Baishemirov,  Zh. D. and Berdyshev,  A. S. and Shiyapov,  K. M.},
	year = {2024},
	pages = {569–577}
}

@article{Magin2004,
	author = {Magin, Richard L.},
	doi = {10.1615/critrevbiomedeng.v32.10},
	issn = {0278-940X},
	journal = {Critical Reviews in Biomedical Engineering},
	number = {1},
	pages = {1--104},
	publisher = {Begell House},
	title = {Fractional Calculus in Bioengineering, Part 1},
	volume = {32},
	year = {2004}}

@book{Tarasov2010,
	author = {Tarasov, Vasily E.},
	doi = {10.1007/978-3-642-14003-7},
	isbn = {9783642140037},
	issn = {1867-8440},
	journal = {Nonlinear Physical Science},
	publisher = {Springer Berlin Heidelberg},
	title = {Fractional Dynamics: Applications of Fractional Calculus to Dynamics of Particles, Fields and Media},
	year = {2010}}

@book{Das2011,
	author = {Das, Shantanu},
	doi = {10.1007/978-3-642-20545-3},
	isbn = {9783642205453},
	publisher = {Springer Berlin Heidelberg},
	title = {Functional Fractional Calculus},
	year = {2011}}

@article{Sun2019,
	author = {Sun, H. G. and Chang, A. and Zhang, Y. and Chen, W.},
	doi = {10.1515/fca-2019-0003},
	journal = {Fractional Calculus and Applied Analysis},
	number = {1},
	pages = {27--59},
	publisher = {Springer Science and Business Media {LLC}},
	title = {A Review on Variable-Order Fractional Differential Equations: Mathematical Foundations, Physical Models, Numerical Methods and Applications},
	volume = {22},
	year = {2019}}

@article{Alimbekova2024b,
	title = {Numerical Method for the Variable-Order Fractional Filtration Equation in Heterogeneous Media},
	volume = {8},
	ISSN = {2504-3110},
	DOI = {10.3390/fractalfract8110640},
	number = {11},
	journal = {Fractal and Fractional},
	publisher = {MDPI AG},
	author = {Alimbekova,  Nurlana and Bakishev,  Aibek and Berdyshev,  Abdumauvlen},
	year = {2024},
	pages = {640}
}

@article{Ali2024,
	author = {Ali, Qasim and Amir, Muhammad and Metwally, Ahmed Sayed M. and Younas, Usman and Jan, Ahmed Zubair and Amjad, Ayesha},
	doi = {10.1007/s10973-024-13205-5},
	issn = {1588-2926},
	journal = {Journal of Thermal Analysis and Calorimetry},
	number = {15},
	pages = {8257--8270},
	publisher = {Springer Science and Business Media LLC},
	title = {Investigation of {MHD} fractionalized viscous fluid and thermal memory with slip and {Newtonian} heating effect: a fractional model based on {Mittag-Leffler} kernel},
	volume = {149},
	year = {2024}}

@article{Arif2023,
	author = {Arif, Muhammad and Kumam, Poom and Seangwattana, Thidaporn and Suttiarporn, Panawan},
	doi = {10.1016/j.heliyon.2023.e17642},
	issn = {2405-8440},
	journal = {Heliyon},
	number = {7},
	pages = {e17642},
	publisher = {Elsevier BV},
	title = {A fractional model of magnetohydrodynamics {Oldroyd-B} fluid with couple stresses, heat and mass transfer: A comparison among {non-Newtonian} fluid models},
	volume = {9},
	year = {2023}}

@article{Asjad2023a,
	author = {Asjad, Muhammad Imran and Usman, Muhammad and Assiri, Taghreed A. and Ali, Arfan and Tag-ElDin, ElSayed M.},
	doi = {10.3389/fmats.2022.1050767},
	issn = {2296-8016},
	journal = {Frontiers in Materials},
	publisher = {Frontiers Media SA},
	title = {Numerical investigation of fractional {Maxwell} nano-fluids between two coaxial cylinders via the finite difference approach},
	volume = {9},
	year = {2023}}

@article{Liu2024a,
	author = {Liu, Yi and Jiang, Xiaoyun and Jia, Junqing},
	doi = {10.3390/fractalfract8100557},
	issn = {2504-3110},
	journal = {Fractal and Fractional},
	number = {10},
	pages = {557},
	publisher = {MDPI AG},
	title = {Numerical Simulation and Parameter Estimation of the Space-Fractional Magnetohydrodynamic Flow and Heat Transfer Coupled Model},
	volume = {8},
	year = {2024}}

@article{Shakeri2011,
	author = {Shakeri, Fatemeh and Dehghan, Mehdi},
	doi = {10.1016/j.apnum.2010.07.010},
	issn = {0168-9274},
	journal = {Applied Numerical Mathematics},
	number = {1},
	pages = {1--23},
	publisher = {Elsevier BV},
	title = {A finite volume spectral element method for solving magnetohydrodynamic {(MHD)} equations},
	volume = {61},
	year = {2011}}

@article{Bader2025,
	author = {Bader, Shujaut H. and Zhu, Xiaojue},
	doi = {10.1016/j.jcp.2024.113658},
	issn = {0021-9991},
	journal = {Journal of Computational Physics},
	pages = {113658},
	publisher = {Elsevier BV},
	title = {{AFiD-MHD}: A finite difference method for magnetohydrodynamic flows},
	volume = {523},
	year = {2025}}

@article{Fambri2025,
	title = {Structure preserving hybrid Finite Volume Finite Element method for compressible MHD},
	volume = {523},
	ISSN = {0021-9991},
	DOI = {10.1016/j.jcp.2024.113691},
	journal = {Journal of Computational Physics},
	publisher = {Elsevier BV},
	author = {Fambri,  Francesco and Sonnendr\"{u}cker,  Eric},
	year = {2025},
	pages = {113691}
}

@article{Shen2018,
	author = {Shen, Ming and Chen, Shurui and Liu, Fawang},
	doi = {10.1016/j.cjph.2018.04.024},
	issn = {0577-9073},
	journal = {Chinese Journal of Physics},
	number = {3},
	pages = {1199--1211},
	publisher = {Elsevier BV},
	title = {Unsteady {MHD} flow and heat transfer of fractional {Maxwell} viscoelastic nanofluid with {Cattaneo} heat flux and different particle shapes},
	volume = {56},
	year = {2018}}

@article{Zhao2016,
	author = {Zhao, Jinhu and Zheng, Liancun and Zhang, Xinxin and Liu, Fawang},
	doi = {10.1016/j.ijheatmasstransfer.2016.07.057},
	issn = {0017-9310},
	journal = {International Journal of Heat and Mass Transfer},
	pages = {203--210},
	publisher = {Elsevier BV},
	title = {Convection heat and mass transfer of fractional {MHD Maxwell} fluid in a porous medium with {Soret} and {Dufour} effects},
	volume = {103},
	year = {2016}}

@article{Liu2024,
	author = {Liu, Yi and Jiang, Mochen},
	doi = {10.3390/magnetochemistry10100072},
	issn = {2312-7481},
	journal = {Magnetochemistry},
	number = {10},
	pages = {72},
	publisher = {MDPI AG},
	title = {Magnetohydrodynamic Analysis and Fast Calculation for Fractional {Maxwell} Fluid with Adjusted Dynamic Viscosity},
	volume = {10},
	year = {2024}}

@article{He2018,
	title = {A priori estimates and optimal finite element approximation of the {MHD} flow in smooth domains},
	volume = {52},
	ISSN = {1290-3841},
	DOI = {10.1051/m2an/2018006},
	number = {1},
	journal = {ESAIM: Mathematical Modelling and Numerical Analysis},
	publisher = {EDP Sciences},
	author = {He,  Yinnian and Zou,  Jun},
	year = {2018},
	pages = {181–206}
}

@article{Liu2022,
	author = {Huan Liu and Xiangcheng Zheng and Hongfei Fu},
	doi = {10.4208/jcm.2102-m2020-0211},
	journal = {Journal of Computational Mathematics},
	number = {5},
	pages = {814--834},
	publisher = {Global Science Press},
	title = {Analysis of a Multi-Term Variable-Order Time-Fractional Diffusion Equation and Its {Galerkin} Finite Element Approximation},
	volume = {40},
	year = {2022}}

@article{Liao_2019,
	author = {Liao, Hong-lin and McLean, William and Zhang, Jiwei},
	doi = {10.1137/16m1175742},
	issn = {1095-7170},
	journal = {SIAM Journal on Numerical Analysis},
	number = {1},
	pages = {218--237},
	publisher = {Society for Industrial & Applied Mathematics (SIAM)},
	title = {A Discrete {Gr{\"o}nwall} Inequality with Applications to Numerical Schemes for Subdiffusion Problems},
	volume = {57},
	year = {2019}}

@article{Vaz2025,
	title = {On fractional differential equations,  dimensional analysis,  and the double gamma function},
	volume = {113},
	ISSN = {1573-269X},
	DOI = {10.1007/s11071-025-11386-8},
	number = {25},
	journal = {Nonlinear Dynamics},
	publisher = {Springer Science and Business Media LLC},
	author = {Vaz,  J. and de Oliveira,  E. Capelas},
	year = {2025},
	pages = {34305–34320}
}

@article{Borve2006,
	title = {Multidimensional {MHD} Shock Tests of Regularized Smoothed Particle Hydrodynamics},
	volume = {652},
	ISSN = {1538-4357},
	DOI = {10.1086/508454},
	number = {2},
	journal = {The Astrophysical Journal},
	publisher = {American Astronomical Society},
	author = {Borve,  S. and Omang,  M. and Trulsen,  J.},
	year = {2006},
	pages = {1306–1317}
}

@misc{Kayanikhoo2023,
	doi = {10.48550/ARXIV.2312.06675},
	author = {Kayanikhoo,  Fatemeh and Cemeljic,  Miljenko and Wielgus,  Maciek and Kluzniak,  Wlodek},
	title = {Energy dissipation in astrophysical simulations: results of the {Orszag-Tang} test problem},
	publisher = {arXiv},
	year = {2023},
	copyright = {arXiv.org perpetual,  non-exclusive license}
}

@article{Hiptmair2018,
	title = {Splitting-Based Structure Preserving Discretizations for Magnetohydrodynamics},
	volume = {4},
	ISSN = {2426-8399},
	DOI = {10.5802/smai-jcm.34},
	journal = {The SMAI Journal of computational mathematics},
	publisher = {MathDoc/Centre Mersenne},
	author = {Hiptmair,  Ralf and Pagliantini,  Cecilia},
	year = {2018},
	pages = {225–257}
}

@article{Orszag1979,
	title = {Small-scale structure of two-dimensional magnetohydrodynamic turbulence},
	volume = {90},
	ISSN = {1469-7645},
	DOI = {10.1017/s002211207900210x},
	number = {1},
	journal = {Journal of Fluid Mechanics},
	publisher = {Cambridge University Press (CUP)},
	author = {Orszag,  Steven A. and Tang,  Cha-Mei},
	year = {1979},
	pages = {129–143}
}

@article{Jadhav2021,
	title = {Analysis of energy transfer through direct numerical simulations of magnetohydrodynamic {Orszag–Tang} vortex},
	volume = {33},
	ISSN = {1089-7666},
	DOI = {10.1063/5.0051476},
	number = {6},
	journal = {Physics of Fluids},
	publisher = {AIP Publishing},
	author = {Jadhav,  Kiran and Chandy,  Abhilash J.},
	year = {2021}
}

@article{Grauer1998,
	title = {Geometry of singular structures in magnetohydrodynamic flows},
	volume = {5},
	ISSN = {1089-7674},
	DOI = {10.1063/1.872939},
	number = {7},
	journal = {Physics of Plasmas},
	publisher = {AIP Publishing},
	author = {Grauer,  Rainer and Marliani,  Christiane},
	year = {1998},
	pages = {2544–2552}
}

@article{Rehman2023,
  title = {A fractional study with {Newtonian} heating effect on heat absorbing {MHD} radiative flow of rate type fluid with application of novel hybrid fractional derivative operator},
  volume = {30},
  ISSN = {2576-5299},
  DOI = {10.1080/25765299.2023.2250063},
  number = {1},
  journal = {Arab Journal of Basic and Applied Sciences},
  publisher = {Informa UK Limited},
  author = {Rehman,  Aziz Ur and Hua,  Song and Riaz,  Muhammad Bilal and Awrejcewicz,  Jan and Xiange,  Sun},
  year = {2023},
  pages = {482–495}
}

@article{Abbas2024,
  title = {Fractional Analysis of Magnetohydrodynamics {Maxwell} Flow Over an Inclined Plate with the Effect of Thermal Radiation},
  volume = {63},
  ISSN = {1572-9575},
  DOI = {10.1007/s10773-024-05654-3},
  number = {5},
  journal = {International Journal of Theoretical Physics},
  publisher = {Springer Science and Business Media LLC},
  author = {Abbas,  Shajar and Nisa,  Zaib Un and Gilani,  Syeda Farzeen Fatima and Nazar,  Mudassar and Metwally,  Ahmed Sayed M. and Jan,  Ahmed Zubair},
  year = {2024}
}

@article{Khan2023,
  title = {Numerical simulations and modeling of {MHD} boundary layer flow and heat transfer dynamics in {Darcy-Forchheimer} media with distributed fractional-order derivatives},
  volume = {49},
  ISSN = {2214-157X},
  DOI = {10.1016/j.csite.2023.103234},
  journal = {Case Studies in Thermal Engineering},
  publisher = {Elsevier BV},
  author = {Khan,  Mumtaz and Zhang,  Zhengdi and Lu,  Dianchen},
  year = {2023},
  pages = {103234}
}

@article{Qiao2024,
  title = {Rotating {MHD} flow and heat transfer of generalized {Maxwell} fluid through an infinite plate with {Hall} effect},
  volume = {40},
  ISSN = {1614-3116},
  DOI = {10.1007/s10409-023-23274-x},
  number = {5},
  journal = {Acta Mechanica Sinica},
  publisher = {Springer Science and Business Media LLC},
  author = {Qiao,  Yanli and Xu,  Huanying and Qi,  Haitao},
  year = {2024}
}

@article{Li2024,
  title = {Numerical study on radiative {MHD} flow of viscoelastic fluids with distributed-order and variable-order space fractional operators},
  volume = {215},
  ISSN = {0378-4754},
  DOI = {10.1016/j.matcom.2023.07.021},
  journal = {Mathematics and Computers in Simulation},
  publisher = {Elsevier BV},
  author = {Li,  Nan and Wang,  Xiaoping and Xu,  Huanying and Qi,  Haitao},
  year = {2024},
  pages = {291–305}
}

@article{Shiyapov2026,
	title = {High-Order Spectral Scheme with Structure Maintenance and Fast Memory Algorithm for Nonlocal Nonlinear Diffusion Equations},
	volume = {6},
	ISSN = {2673-9909},
	DOI = {10.3390/appliedmath6040054},
	number = {4},
	journal = {AppliedMath},
	publisher = {MDPI AG},
	author = {Shiyapov,  Kadrzhan and Abdiramanov,  Zhanars and Issa,  Zhuldyz and Zhumaseyitova,  Aruzhan},
	year = {2026},
	pages = {54}
}

@article{Wacker2016,
  title = {Nodal-based finite element methods with local projection stabilization for linearized incompressible magnetohydrodynamics},
  volume = {302},
  DOI = {10.1016/j.cma.2016.01.004},
  journal = {Computer Methods in Applied Mechanics and Engineering},
  publisher = {Elsevier BV},
  author = {Wacker,  Benjamin and Arndt,  Daniel and Lube,  Gert},
  year = {2016},
  pages = {170–192}
}

@article{BeirodaVeiga2025,
  title = {Pressure and convection robust finite elements for magnetohydrodynamics},
  volume = {157},
  ISSN = {0945-3245},
  DOI = {10.1007/s00211-025-01476-5},
  number = {4},
  journal = {Numerische Mathematik},
  publisher = {Springer Science and Business Media LLC},
  author = {Beirão da Veiga,  L. and Dassi,  F. and Vacca,  G.},
  year = {2025},
  pages = {1161–1209}
}

@article{John2017,
  title = {On the Divergence Constraint in Mixed Finite Element Methods for Incompressible Flows},
  volume = {59},
  ISSN = {1095-7200},
  DOI = {10.1137/15m1047696},
  number = {3},
  journal = {SIAM Review},
  publisher = {Society for Industrial & Applied Mathematics (SIAM)},
  author = {John,  Volker and Linke,  Alexander and Merdon,  Christian and Neilan,  Michael and Rebholz,  Leo G.},
  year = {2017},
  pages = {492–544}
}

@article{Planas2011,
  title = {Approximation of the inductionless MHD problem using a stabilized finite element method},
  volume = {230},
  ISSN = {0021-9991},
  DOI = {10.1016/j.jcp.2010.12.046},
  number = {8},
  journal = {Journal of Computational Physics},
  publisher = {Elsevier BV},
  author = {Planas,  Ramon and Badia,  Santiago and Codina,  Ramon},
  year = {2011},
  pages = {2977–2996}
}

@article{Altybay2026,
  title = {Numerical identification of a time-dependent coefficient in a time-fractional diffusion equation with integral constraints},
  volume = {77},
  ISSN = {1420-9039},
  DOI = {10.1007/s00033-025-02653-0},
  number = {1},
  journal = {Zeitschrift f\"{u}r angewandte Mathematik und Physik},
  publisher = {Springer Science and Business Media LLC},
  author = {Altybay,  Arshyn},
  year = {2026}
}

@article{Lin2007,
  title = {Finite difference/spectral approximations for the time-fractional diffusion equation},
  volume = {225},
  ISSN = {0021-9991},
  DOI = {10.1016/j.jcp.2007.02.001},
  number = {2},
  journal = {Journal of Computational Physics},
  publisher = {Elsevier BV},
  author = {Lin,  Yumin and Xu,  Chuanju},
  year = {2007},
  pages = {1533–1552}
}

@article{Alikhanov2015,
  title = {A new difference scheme for the time fractional diffusion equation},
  volume = {280},
  ISSN = {0021-9991},
  DOI = {10.1016/j.jcp.2014.09.031},
  journal = {Journal of Computational Physics},
  publisher = {Elsevier BV},
  author = {Alikhanov,  Anatoly A.},
  year = {2015},
  pages = {424–438}
}

\end{document}